\title{Edge Deletion Algorithms for Minimizing Spread in SIR Epidemic Models
\thanks{The first author and the fourth author are supported by Knut \& Alice Wallenberg foundation, and by Swedish Research Council. The second author is supported in part by National Science Foundation under grants CCF-1955351 and HDR TRIPODS CCF-1934931. The third author is supported in part by the C3.ai Digital Transformation Institute sponsored by C3.ai Inc. and the Microsoft Corporation and in part by the National Science Foundation under Grants NFS-CNS-2028738 and NFS-ECCS-2032258. }
}
\author{Yuhao Yi\footnotemark[1]
\and Liren Shan\footnotemark[2]
\and Philip E. Par\'{e}\footnotemark[3]
\and Karl H. Johansson\footnotemark[1]
}
\date{}
\DeclareMathOperator{\diag}{diag}
\newtheorem{theorem}{Theorem}[section]
\newtheorem{lemma}[theorem]{Lemma}
\newtheorem{proposition}[theorem]{Proposition}
\newtheorem{example}[theorem]{Example}
\newtheorem{definition}[theorem]{Definition}
\newtheorem{remark}[theorem]{Remark}
\newtheorem{problem}{Problem}
\newcommand{\1}{\mathbf{1}}
\newcommand{\0}{\mathbf{0}}
\newcommand{\Erdos}{Erd\H{o}s--}
\newcommand{\Renyi}{R\'{e}nyi }
\newcommand{\eps}{\epsilon}
\renewcommand{\emptyset}{\varnothing}
\def\diag#1{\mathrm{diag}\left(#1 \right)}
\def\sat#1{\mathrm{sat}\left(#1 \right)}
\def\defeq{\stackrel{\mathrm{def}}{=}}
\def\prob#1#2{\mathbb{P}_{#1}\left[ #2 \right]}
\def\expec#1#2{{\mathbb{E}}_{#1}\left[ #2 \right]}
\newcommand{\norm}[1]{\left\lVert#1\right\rVert}
\newcommand{\SBM}{\mathcal{SBM}}
\newcommand{\calG}{\mathcal{G}}
\newcommand{\calS}{\mathcal{S}}
\newcommand{\calC}{\mathcal{C}}
\renewcommand\aa{\boldsymbol{\mathit{a}}}
\newcommand\mm{\boldsymbol{\mathit{m}}}
\newcommand\rr{\boldsymbol{\mathit{r}}}
\renewcommand\ss{\boldsymbol{\mathit{s}}}
\newcommand\xx{\boldsymbol{\mathit{x}}}
\newcommand\ee{\boldsymbol{\mathit{e}}}
\newcommand\BB{\boldsymbol{\mathit{B}}}
\newcommand\DDelta{\boldsymbol{\mathit{D}}}
\newcommand\DD{\boldsymbol{\mathit{D}}}
\newcommand\II{\boldsymbol{\mathit{I}}}
\newcommand\MM{\boldsymbol{\mathit{M}}}
\newcommand\QQ{\boldsymbol{\mathit{Q}}}
\newcommand\RR{\boldsymbol{\mathit{R}}}
\newcommand\XX{\boldsymbol{\mathit{X}}}
\DeclareMathOperator*{\argmin}{arg\,min}
\DeclareMathOperator*{\argmax}{arg\,max}
\begin{document}

\maketitle

\renewcommand{\thefootnote}{\fnsymbol{footnote}}

\footnotetext[1]{Division
of Decision and Control Systems, School of Electrical Engineering and
Computer Science, KTH Royal Institute of Technology, Stockholm, Sweden.
(\texttt{yuhaoy@kth.se}, \texttt{kallej@kth.se}).}

\footnotetext[2]{Department of Computer Science, Northwestern University, Evanston, IL, USA. (\texttt{lirenshan2023@u.northwestern.edu}).}
  
\footnotetext[3]{School of Electrical and Computer Engineering, Purdue University, West Lafayette, IN, USA. (\texttt{philpare@purdue.edu}).}

\begin{abstract}
This paper studies algorithmic strategies to effectively reduce the number of infections in susceptible-infected-recovered (SIR) epidemic models. We consider a Markov chain SIR model and its two instantiations in the deterministic SIR (D-SIR) model and the independent cascade SIR (IC-SIR) model. We investigate the problem of minimizing the number of infections by restricting contacts under realistic constraints. Under moderate assumptions on the reproduction number, we prove that the infection numbers are bounded by supermodular functions in the D-SIR model and the IC-SIR model for large classes of random networks. We propose efficient algorithms with approximation guarantees to minimize infections. The theoretical results are illustrated by numerical simulations.
\end{abstract}

\section{Introduction}

Epidemic spreading processes can significantly disrupt the functioning of the society and pose risks to the health of individuals. Therefore, the study of epidemic spread has a long history.
One of the most fundamental models is the susceptible-infected-recovered (SIR) model, where each individual can be in one of three states: susceptible, infected, or recovered. Variations of the SIR model have been proposed, categorized as compartment-based models and networked models.  An overview of SIR models is provided in \cite{youssef2011individual}. The behaviors of the SIR spread dynamics are extensively studied~\cite{kermack1927contribution,abbey1952examination,draief2008thresholds,borgs2010distribute}. 

The problem of efficiently controlling epidemic spread has received increasing attention. One line of work has focused on minimizing the spectral norm of the transition matrix to suppress the process~\cite{wan2007network,wan2008designing,MSKL+11,bishop2011link,PZEJ+14,mai2018distributed}. Resource allocation~\cite{PZEJ+14} and network modifying~\cite{wan2007network, prakash2013fractional,pare2017epidemic} strategies have also been studied. Optimal control problems have been studied to minimize the cost over a given horizon~\cite{khanafer2014optimal}. Since the total number of infections is an important criterion, problems of minimizing infections have been proposed~\cite{prakash2013fractional,OP16}.

In this paper, we consider optimization problems that minimize infections in a networked SIR Markov chain model. We provide efficient strategies of modifying the structure of the contact network to minimize the number of infections. We show the effectiveness of the proposed algorithms through theoretical characterizations and numerical evaluations.

\subsection{Related Work} 

Followed from the Markov chain susceptible-infected-susceptible (SIS) model~\cite{van2009virus}, continues and discrete Markov chain SIR models have been proposed in~\cite{youssef2011individual} and~\cite{RH15}. Deterministic models based on mean-field approximations in the Markov chain models have also been studied in these papers. 
%
The independent cascade SIR (IC-SIR) model can be viewed as a networked extension to the Reed-Frost model~\cite{draief2008thresholds}, which is one of the earliest SIR models studied in depth~\cite{abbey1952examination}. The IC-SIR model was proposed in~\cite{GLM01}, popularized in~\cite{KKT03}, and has a rich volume of follow-up studies on efficient algorithms for influence maximization~\cite{BBCL14}, network design~\cite{WKSZ17}, and inferring network structure~\cite{NS12}.


Some researchers have
focused on problems of minimizing the spectral norm of a parameter matrix by removing nodes or edges. These problems have been proven to be $\mathbf{NP}$-complete and $\mathbf{NP}$-hard, respectively~\cite{MSKL+11}. 
Several heuristic algorithms are proposed based on betweenness centrality~\cite{HK02,SMHH11} or convex relaxation of the original problem~\cite{bishop2011link}. The continuous version of the problem have been discussed in~\cite{preciado2013optimal,PZEJ+14,mai2018distributed,pareautomatica,hota2020closed}.

As for minimizing number of infections, a graph partitioning problem has been investigated in \cite{enns2012optimal}, without considering any dynamics or random initial conditions in the network. A budget-constraint resource allocation problem has been studied in~\cite{OP16}. The problem is formulated as a continuous optimization by modifying the infection rates and recovery rates for nodes under the assumption that the system is stable. To the best of our knowledge, no algorithm with provable approximation guarantee has been proposed for the node and edge removal problems to minimize infections.

\subsection{Contributions}
We start by proposing a general Markov chain SIR (G-SIR) model. Two common SIR models are simplifications to the G-SIR model, the deterministic SIR (D-SIR) model and the IC-SIR model.

Our main results focus on the problem of minimizing the number of infections by removing edges in the contact network from a given candidate set. The candidate set models the reality that only limited types of contacts can be removed. With moderate restrictions on the reproduction number, we show that the problems can be efficiently solved by using greedy algorithms.

For the D-SIR model, we propose an upper bound for the number of infections as a surrogate objective function. We prove that when the system is exponentially stable, the surrogate function is a monotone supermodular function of the set of deleted
edges, although in a relaxed problem the same function is not convex with respect to the edge weights. Then it is shown that the number of infections can be reduced by efficiently minimizing the upper bound.

For the IC-SIR model, we study the expected number of infections. In this model, the epidemic spreads over a contagion network randomly sampled from a given contact network. We consider contact networks generated from \Erdos \Renyi (ER) graphs and the stochastic block model (SBM). 
The statistics of the contagion network encode 
many properties of the spread process. For example, the expected average degree of the contagion network can be interpreted as the reproduction number. When the average degree of the contagion network is less than one, and the number of initial infections is no larger than $O(n^{\frac{1}{3}-c})$ where $n$ is the population size and $c$ is any positive number, we design efficient mitigation
algorithms. 
For a contact network generated from an ER graph, we prove that with high probability, the expected number of infections is well approximated by a monotone supermodular function. We obtain similar results for SBMs, with the additional assumption that the number of blocks is no larger than $O(\log{n})$.
We prove that the problem is $\mathbf{NP}$-hard in a regime where the reproduction number is greater than one, indicating that in this case the problem is computationally hard.

Our theoretical results are supported by numerical simulations. We show the effectiveness of the proposed algorithms by running simulations on both synthesized random networks and 
a real contact network.
We compare the results given by the 
algorithms using the D-SIR model and 
the IC-SIR model and corresponding results for the
G-SIR model. We find that both algorithms effectively reduce the number of infections also in the G-SIR model under the proposed conditions.

\subsection{Outline} The remainder of the paper is organized as follows. In Section~\ref{sec:pre} we introduce notations and review some basic concepts. In Section~\ref{sec:models} we describe the considered models. In Section~\ref{sec:problem} we present problem formulation. In Section~\ref{sec:dsir} we present a supermodular upper bound for the infections in the D-SIR model when the system is stable. 
We provide conditions such that, with high probability, the number of infections approximates a supermodular function,
addressing the cases where,
in Section~\ref{sec:IC_R} and~\ref{sec:IC_SBM},
the contact network is sampled from an ER graph and a SBM, 
respectively. In Section~\ref{sec:hardness} we prove the $\mathbf{NP}$-hardness of the problem. Section~\ref{sec:numerical} provides some numerical experiments and simulations to show the effectiveness of the proposed algorithms in all the considered models, followed by the Conclusion.
\section{Preliminaries}
\label{sec:pre}

Let $G=(V,E)$ be a graph with vertex (node) set $V$ and edge set $E$. $G$ can be either directed or undirected unless noted specifically. For a digraph, an edge from node $j$ to node $i$ is denoted as an ordered pair $(j,i)$; for a undirected graph the edge is denoted as an unordered pair $\{j,i\}$. We define the size of a graph or connected component as the number of vertices (nodes) in it. 

Let $\calG(n,p)$ denote the ER graph with $n$ vertices where each edge occurs with probability $p$. We define the SBM $\SBM(n,\kappa,\QQ)$ as follows:
Let $\{V_1,V_2,\cdots,V_\kappa\}$ be $\kappa$ communities with $n$ vertices. For each pair of vertices $u\in V_i$ and $v \in V_j$, the edge $(u,v)$ is sampled with probability $Q_{ij}$ independently.

In this paper we use bold font for matrices and vectors. We denote $\ee_i, i\in[n]$ the $i$-th canonical basis of $\mathbb{R}^n$. We use $\1$ and $\0$ to denote all one and all zero vectors. Further, we recall the definition and several properties of M-matrices. An M-matrix is a square matrix whose off-diagonal entries are non-positive and eigenvalues have non-negative real parts. It has the following property.
\begin{lemma}[\cite{plemmons1977m}]\label{lem:m-matrix}
A non-singular M-matrix is inverse-positive, which means the entries of the inverse are all non-negative.
\end{lemma} 

We adopt the standard asymptotic notation. Given two functions $f$ and $g$ of the variable $n$, we denote $f=O(g)$
if there exists an $n_0$ and a constant $c>0$, such that for all $n\geq n_0$, $f\leq c\cdot g$. Further, we denote $f=o(g)$  
if $f/g$ tends to zero when $n$ tends to infinity. The term \emph{with high probability} is used when the probability is $1-o(1)$.

The following
definitions are
frequently used.

\begin{definition}[Plus and minus operation for graphs]
Given two graphs $G_1=(V_1,E_1)$, $G_2=(V_2,E_2)$, 
$G'=(V',E')\defeq G_1+G_2$ 
is 
a new graph with vertex set $V'= V_1 \cup V_2$ and edge set $E'= E_1\cup E_2$, and 
$G''=(V'',E'')\defeq G_1-G_2$ 
is
a graph with $V''= V_1 \cup V_2$ and $E''= E_1\backslash E_2$.
\end{definition}

\begin{definition}[Monotonicity]
A set function $f:2^{\Omega}\mapsto \mathbb{R}$ is 
monotonically 
non-increasing if $f(P_1)\geq f(P_2)$ holds for all $P_1\subseteq P_2\subseteq \Omega$.
\end{definition}

\begin{definition}[Supermodularity]
A set function $f:2^{\Omega}\mapsto \mathbb{R}$ is supermodular if $f(P_1)-f(P_1\cup \{e\}) \geq f(P_2)-f(P_2\cup \{e\})$ for all $P_1\subseteq P_2\subseteq E$ and all $e\in (E\backslash P_2)$.
\end{definition}
A function $f(P)$ is submodular if $g(P)\defeq -f(P)$
is supermodular. A function $f(P)$ is modular if it is both submodular and supermodular.

\begin{definition}[$(1\pm\eps)$-approximation]
Given two real numbers $a,b \geq 0$ and an approximation error $\eps\geq 0$, if $a$ and $b$ satisfy
\begin{align*}
    (1-\eps)b \leq a\leq (1+\eps)b\,,
\end{align*}
then $b$ is an $(1\pm\eps)$-approximation of $a$, denoted $a\approx_{\eps}b$.
\end{definition}
We note that $a\approx_{o(1)}b$ if and only if $b\approx_{o(1)}a$.

\section{Model Description}
\label{sec:models}
In networked SIR models, each node, at a given time, is exclusively in one of three states: \emph{susceptible} ($S$), \emph{infected} ($I$), or \emph{removed} ($R$). For node $i$, we define the $\{0,1\}$ indicator variables for the states at time step $t$ as $S_i(t)$, $I_i(t)$, and $R_{i}(t)$. The variables take value $1$ if the node is in that state, and otherwise take value $0$. We note that $S_i(t)+I_i(t)+R_i(t)=1$. Motivated by~\cite{HH13,RH15}, we propose the G-SIR
model using a $3^n$-state Markov chain. The recursive relations between the states are defined as
\begin{align}
\label{eqn:markov_I}
    I_i(t+1) &= I_i(t)(1-\delta_i(t))+S_i(t)\left(1-\prod_{j=1}^n \left(1-\beta_{ij}(t)I_j(t)\right)\right)\,,\\
    R_i(t+1)&=R_i(t)+\delta_{i}(t)I_i(t)\,,
\end{align}
where $\beta_{ij}(t)$ is the random indicator variable for the event that node $i$ is infected from its infected neighbor $j$; $\delta_i(t)$ is the random indicator variable for recovering. We assume that $\beta_{ij}(t)$ for any $i,j$ are mutually independent. In addition, we assume that for a fixed $i,j$ pair, $\beta_{ij}(t)$ are $i.i.d$ with respect to $k$, and for a fixed $i$, $\delta_i(t)$ are $i.i.d.$ with respect to $t$.

We simplify the G-SIR model by using mean-field approximation in a similar manner as~\cite{van2014exact,cator2014nodal}. We take expectation for both sides of (\ref{eqn:markov_I}) and arrive at
\begin{align*}
    \expec{}{I_i(t+1)} &= \expec{}{I_i(t)}\expec{}{1-\delta_i(t)}+\expec{}{S_i(t)}-\expec{}{S_i(t)}\prod_{j=1}^n \expec{}{1-\beta_{ij}(t)I_j(t)}\,,
\end{align*}
by assuming that the states of all nodes are mutually independent for any given $t$. 
Let $x_i(t)\in [0,1]$ and $r_i(t)\in [0,1]$ be the probabilities of node $i$ being infected and being removed at time step $t$,
respectively.
Then, by using the approximation $1-x\approx e^{-x}$ for small $x$, the mean-field approximation of the corresponding discrete-time SIR model 
can be written as
\begin{align*}
    x_i(t+1) &= x_i(t) + \left(1 - x_i(t) - r_i(t)\right) 
    \sum_{j=1}^n B_{ij} x_j(t) 
    - D_{i} x_i(t),  \\
    r_i(t+1) &= r_i(t) + D_{i} x_i(t), 
\end{align*}
where $B_{ij}\defeq \expec{}{\beta_{ij}(t)}$ and $D_{i}\defeq \expec{}{\delta_{i}(t)}$.
$B_{ij}$ is the infection rate for edge $(j,i)$ and $D_i$ is the healing rate for node $i$. In some variants of this model, $B_{ij}$ is interpreted as the product of the infection rate and the edge weight~\cite{HH13,hota2020closed}.

We define the vectors $\xx(t),\rr(t)$ with entries $x_i(t)$ and $r_i(t)$ for all nodes $i$. Then the discrete-time SIR model can be expressed in matrix form as follows
\begin{align}
    \xx(t+1) &= \xx(t) +  (\II - \XX(t) - \RR(t))
    \BB\xx(t)
    -\DDelta  \xx(t), \label{eq:pdiscrete} \\
    \rr(t+1) &= \rr(t) +  \DDelta \xx(t), \label{eq:rdiscrete}
\end{align}
where the entries of $\BB$ are $B_{ij}$; 
$\DD$ is a diagonal matrix with diagonal entry $D_{i}$; 
and  
$\XX(t)$ and $\RR(t)$ are the diagonal 
matrices
$\diag{\xx(t)}$ and $\diag{\rr(t)}$, respectively. 

In this paper we study the cumulative number of infections,
defined as
$\mm(t)\defeq \xx(t)+\rr(t)$. We denote $\mm^*$ with entries $m^*_i\defeq \sup_{t}(m_i(t))$. Then $\norm{\mm^*-\mm(0)}_1$ can be interpreted as the
number of increased infections in the network. This vector $\mm(t)$ is known to be inside $[0,1]^n$ under the assumption that $D_i < 1$, $\sum_{j=1}^n B_{ij} <1$ for all $i\in[n]$
~\cite{hota2020closed}. Therefore we can treat $m_i(t)$ as the probability that node $i$ is in $I$ or $R$ at time $t$.
We refer to the model given by (\ref{eq:pdiscrete}) and (\ref{eq:rdiscrete}) as the D-SIR Model.
It is well known that the mean-field approximation gives an upper bound for the probabilities of infections in the G-SIR model~\cite{cator2014nodal}. 


We consider the IC-SIR Model~\cite{GLM01,KKT03,NS12} as another simplification to the 
G-SIR model, where $\delta_{i}(t)=1$ for all $i$ and $t$. 
In this model, we use random indicator variable vectors $\tilde{\xx}(t) \in \{0,1\}^n$ and $\tilde{\rr}(t) \in \{0,1\}^n$ to denote 
whether the nodes are
in states $I$ and $R$, respectively. The nonzero entries in $\tilde{\xx}(0)$ are the initial infected nodes, which are referred to as seeds. Let $s \defeq \norm{\tilde{\xx}(0)}_1$ be the number of initial infections.
We also define a matrix $\widetilde{\BB}(t)$, where the entries are random variables $\beta_{ij}(t)$ which are $i.i.d.$ with respect to $t$.
The success probability for each entry $\beta_{ij}(t)$ in $\widetilde{\BB}(t)$ is $p_{ij} \defeq \expec{}{\beta_{ij}(t)}$.
Thus, the IC-SIR model can be expressed as follows
\begin{align}
\tilde{\xx}(t+1) &= \sat{\widetilde{\BB}(t)\tilde{\xx}(t)+\tilde{\xx}(t)+\tilde{\rr}(t)}  - \tilde{\xx}(t) - \tilde{\rr}(t),\\
\tilde{\rr}(t+1) &= \tilde{\rr}(t) + \tilde{\xx}(t),
\end{align}
where the saturation function $\sat{\cdot}: \mathbb{R}^n \mapsto \mathbb{R}^n$ is defined as $\sat{\aa}_i=1$ if $a_i\geq 1$, and $\sat{\aa}_i=a_i$ if $a_i\leq 1$.

For each edge in the network, it can induce a new infection only when one of its incident nodes is infected and the other one is susceptible. Since any infected node recovers in one time step, 
each edge can change the number of infections at the first time that
any of its incident nodes are
infected.
Therefore, the random transition matrix $\widetilde{\BB}(t)$ can be sampled once beforehand, denoted as $\widetilde{\BB}$. This matrix $\widetilde{\BB}$ can be viewed as a $\{0,1\}$ adjacency matrix of a sampled network where the actual spreading happens. This network is called the contagion network. At the end of a spread process, nodes are either in the removed state or the susceptible state. Further, we use $\tilde{\rr}^*$ to denote the nodes eventually in state $R$: $\tilde{r}_i^*\defeq \sup_t(\tilde{r}_i(t))$ for each node $i$. 
Then $\norm{\tilde{\rr}^*-\tilde{\xx}(0)-\tilde{\rr}(0)}_1$ is the number of increased infections.


\section{Problem Formulation}
\label{sec:problem}
We consider the problems of minimizing the number of infections for the D-SIR model and the IC-SIR model.
\begin{problem}\label{prob:dm}
Given a digraph $G=(V,E)$, the infection rate $B_{ij}$ for each edge $(j,i)$ in $G$ and healing rate $D_{i}$ for every node $i\in V$, an initial state vector $\xx(0)\in [0,1]^{n}$ and $\rr(0)$ such that $\xx(0)+\rr(0) \in [0,1]^{n}$, a candidate edge deletion set $Q\subseteq E$, $|Q|=q$, an integer $0<k\leq q$, find a set of edges $P^*\subseteq Q$, where $|P^*|\leq k$, such that
\begin{align}
\label{obj:D_SIR}
    P^{*} \in \argmin_{P\subseteq Q,|P|=k} \sigma(P)\,,
\end{align}
where $\sigma(P)\defeq\norm{\mm^*-\mm(0)}_1$.
\end{problem}

\begin{problem}\label{prob:ic}
Given an undirected graph $G=(V,E)$, 
the 
activation
probability of each edge $p_{ij}\defeq\prob{}{ \widetilde{B}_{ij}(t)=1}$, an initial state vector $\tilde{\xx}(0)\in\{0,1\}^n$ with $s$ nonzero entries, 
a candidate edge deletion set $Q\subseteq
E$, where $|Q|=q$, 
and
an integer $0<k\leq q$, find a set of edges $P^*\subseteq Q$, where $|P^*|\leq k$, such that
\begin{align}\label{obj:IC-SIR}
    P^{*} \in \argmin_{P\subseteq Q,|P|=k} \expec{}{\tilde{\sigma}(P)}\,,
\end{align}
where $\tilde{\sigma}(P) \defeq \norm{\tilde{\rr}^*-\tilde{\xx}(0)}_1$. 
\end{problem}
We note that $\sigma(P)$ and $\tilde{\sigma}(P)$ are defined to measure the number of new infections. 

\begin{remark}
An extension of Problem~\ref{prob:ic} is to consider $\tilde{\xx}(0)\in\{0,1\}^n$, where each $\tilde{x}_i(0)$ is a Bernoulli random variable with mean $\mu_i$. Let $s = \sum_{i=1}^n \mu_i$ be the expected number of initial infections. By the Chernoff bound, the number of initial infections in $\tilde{\xx}(0)$ is at most $s+3\sqrt{s\log n}$ with high probability. 
\end{remark}



In this paper we find conditions for the D-SIR model and the IC-SIR model such that the objectives of Problem~\ref{prob:dm} and~\ref{prob:ic} are bounded by monotone
supermodular functions of the edge deletion set. 
For Problem~\ref{prob:dm}, when the dynamics of the D-SIR model are
exponentially stable, we find a monotone 
supermodular upper bound $\hat{\sigma}$ of the objective function. Therefore we use the upper bound as a surrogate to minimize. According to a well-known result of supermodular minimization~\cite{Nem78}, we obtain a greedy algorithm which gives a $(1- 1/e)$ approximation for the optimum decrease of the surrogate.

For Problem~\ref{prob:ic}, when the contact network is sampled from an ER graph or a SBM, the expected number of infections $\expec{}{\tilde{\sigma}}$ is a $(1\pm o(1))$-approximation to a monotone
supermodular function $\expec{}{\tilde{\sigma}'}$ with high probability. In this case we optimize the original objective function $\expec{}{\tilde{\sigma}}$ and treat the differences as errors. A greedy algorithm gives a $(1-1/e-o(1)-\eps)$-approximation~\cite{KKT03} for the optimal decrease in
the expected number of infections. 
We describe the greedy algorithm with a given set function $f(P)$ in Algorithm~\ref{alg:greedy}. We note that $f$ is defined as $f= \hat{\sigma}$ for Problem~\ref{prob:dm} and $f= \expec{}{\tilde{\sigma}'}$ for Problem~\ref{prob:ic}.

\begin{algorithm2e}[t]
\SetAlgoLined
\caption{Greedy algorithm}
\label{alg:greedy}

\SetKwData{Left}{left}\SetKwData{This}{this}\SetKwData{Up}{up}
\SetKwFunction{Union}{Union}\SetKwFunction{FindCompress}{FindCompress}
\SetKwInOut{Input}{Input}\SetKwInOut{Output}{Output}

\Input{a function $f\in \{\hat{\sigma}, \expec{}{\tilde{\sigma}'}\}$, a contact network $G$, initial states of nodes, a candidate edge set $Q$, an integer $k$;}
\Output{a set of $k$ edges $P \subseteq Q$;}

Initialize the set $P = \varnothing$;\\
\For{round $i$ from $1$ to $k$}{
Compute the corresponding function $f(P\cup\{e\})$ for any $e \in Q \setminus P$;\\
Set $e^* = \argmax_{e \in Q \setminus P} f(P) - f(P\cup\{e\})$;\\
Update $P = P \cup \{e^*\}$;
}
Return $P$;
\end{algorithm2e}

\section{Supermodularity for the D-SIR Model}
\label{sec:dsir}
In this section, we consider the problem of deleting edges from the current network to mitigate epidemic spread in the D-SIR model. First, we show that the cumulative number of infections is upper bounded by a supermodular function of all the edges in the graph.

Let $\MM_{-P} \defeq \II-\DDelta + (\II - \XX(0) - \RR(0))\BB_{-P}$ denote the transition matrix between $\xx(1)$ and $\xx(0)$, where $\BB_{-P}$ is the matrix obtained from $\BB$ by setting entries $B_{ij}=0$ for all edges $(j,i)\in P$. 
\begin{theorem}\label{thm:dm_upper}
Suppose the D-SIR model satisfies $\norm{\MM_{-P}} < 1$ for all $P \subseteq Q$, where $Q$ is the candidate edge deletion set. The number of infections is then at most 
\begin{equation}\label{eq:dm_upper_bound}
\sigma(P) \leq \hat{\sigma}(P) \defeq \1^\top (\MM_{-P}+\DDelta-\II)(\II-\MM_{-P})^{-1} \xx(0),
\end{equation}
where $\hat{\sigma}(P)$ is a monotone supermodular function of the edge deletion
set $P$.
\end{theorem}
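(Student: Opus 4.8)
The statement breaks into three parts: the bound $\sigma(P)\le\hat\sigma(P)$, monotonicity of $\hat\sigma$, and supermodularity of $\hat\sigma$. I would obtain the bound by dominating the nonlinear D-SIR trajectory term by term with a linear recursion, and the two set-function properties through a walk-counting expansion of $(\II-\MM_{-P})^{-1}$. First the structural facts: under the standing assumptions $D_i<1$ and $\sum_j B_{ij}<1$, the argument of~\cite{hota2020closed} (equivalently, a short joint induction on $t$) shows that running~\eqref{eq:pdiscrete}--\eqref{eq:rdiscrete} with $\BB_{-P}$ in place of $\BB$ keeps $\xx(t)\ge\0$ and $\mm(t)\in[0,1]^n$ for every $t$; hence $\II-\DDelta$, $\BB_{-P}$ and $\II-\XX(0)-\RR(0)$ are entrywise nonnegative, and therefore so is $\MM_{-P}$. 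Since $\mm(t+1)-\mm(t)=(\II-\XX(t)-\RR(t))\BB_{-P}\xx(t)\ge\0$, the sequence $\mm(t)$ is nondecreasing and bounded, so $m_i^*=\lim_t m_i(t)$ exists and $\sigma(P)=\1^\top(\mm^*-\mm(0))=\sum_{t\ge0}\1^\top(\II-\XX(t)-\RR(t))\BB_{-P}\xx(t)$.

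For the bound itself: from $\mm(t)\ge\mm(0)$ we get $\II-\XX(t)-\RR(t)\le\II-\XX(0)-\RR(0)$ entrywise, and an induction on $t$ using $\MM_{-P}\ge\0$ gives $\0\le\xx(t)\le\MM_{-P}^t\xx(0)$. Dominating each summand termwise and summing,
\[
\sigma(P)\le\1^\top(\II-\XX(0)-\RR(0))\BB_{-P}\sum_{t\ge0}\MM_{-P}^t\,\xx(0)=\1^\top(\II-\XX(0)-\RR(0))\BB_{-P}(\II-\MM_{-P})^{-1}\xx(0),
\]
the Neumann series converging because $\rho(\MM_{-P})\le\norm{\MM_{-P}}<1$. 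By the definition of $\MM_{-P}$ one has $(\II-\XX(0)-\RR(0))\BB_{-P}=\MM_{-P}+\DDelta-\II$, so the right-hand side is exactly $\hat\sigma(P)$.

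For the two set-function properties I would rewrite $\hat\sigma(P)=-\1^\top\xx(0)+g(P)$ with $g(P)\defeq\1^\top\DDelta(\II-\MM_{-P})^{-1}\xx(0)$, using $(\MM_{-P}+\DDelta-\II)(\II-\MM_{-P})^{-1}=-\II+\DDelta(\II-\MM_{-P})^{-1}$. Expanding $(\II-\MM_{-P})^{-1}=\sum_{t\ge0}\MM_{-P}^t$ and each power as a sum over length-$t$ walks in $G$, one gets $g(P)=\sum_{W}\mathrm{wt}_P(W)$, a sum over all walks $W$, where the weight of $v_0\to v_1\to\cdots\to v_t$ is $x_{v_0}(0)\,\big(\prod_{s=1}^{t}[\MM_{-P}]_{v_sv_{s-1}}\big)\,D_{v_t}$. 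Since $[\MM_{-P}]_{ij}=[\MM]_{ij}$ unless $(j,i)\in P$, in which case $[\MM_{-P}]_{ij}=0$ for $i\neq j$, this weight equals a fixed nonnegative number $\mathrm{wt}(W)$ (the same product written with $\MM$) whenever $W$ traverses no edge of $P$, and equals $0$ otherwise; here I would assume, harmlessly, that the candidate set $Q$ contains no self-loops. Writing $E(W)$ for the set of edges traversed by $W$, this gives $g(P)=\sum_{W:\,E(W)\cap P=\emptyset}\mathrm{wt}(W)$, an absolutely convergent sum of nonnegative terms (bounded by $g(\emptyset)=\1^\top\DDelta(\II-\MM)^{-1}\xx(0)<\infty$, which legitimizes the rearrangement). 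Monotonicity is then immediate, since enlarging $P$ only deletes terms from this nonnegative series. For $e\in E\setminus P$ the walks counted by $g(P)$ but not by $g(P\cup\{e\})$ are precisely those that avoid $P$ and traverse $e$, so $\hat\sigma(P)-\hat\sigma(P\cup\{e\})=\sum_{W:\,E(W)\cap P=\emptyset,\ e\in E(W)}\mathrm{wt}(W)$; enlarging $P$ shrinks this index set, yielding $\hat\sigma(P_1)-\hat\sigma(P_1\cup\{e\})\ge\hat\sigma(P_2)-\hat\sigma(P_2\cup\{e\})$ for $P_1\subseteq P_2$ and $e\in E\setminus P_2$, which is supermodularity.

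I expect the main obstacle to be the bookkeeping behind the first two paragraphs: one must verify that the \emph{edge-deleted} dynamics remain in $[0,1]^n$ and that $\MM_{-P}$ is entrywise nonnegative, since everything downstream --- the domination $\xx(t)\le\MM_{-P}^t\xx(0)$, the monotonicity of the matrix powers, and the nonnegativity of every walk weight --- depends on it. A secondary point is justifying the interchange of the sum over $t$ with the sum over walks, which relies on the geometric decay guaranteed by $\rho(\MM)<1$. Once these are in place, supermodularity --- the part that a priori looks hardest --- falls out of the walk expansion with essentially no further work; I do not see an equally clean route via Sherman--Morrison rank-one updates, since there the naive estimate is left with a denominator factor that varies with $P$ in the wrong direction.
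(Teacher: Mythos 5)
Your proof is correct, and while the derivation of the bound $\sigma(P)\le\hat\sigma(P)$ is essentially identical to the paper's (dominate the nonlinear dynamics by $\xx(t)\le\MM_{-P}^t\xx(0)$ using the monotonicity of $\XX(t)+\RR(t)$, then sum the Neumann series and use $(\II-\XX(0)-\RR(0))\BB_{-P}=\MM_{-P}+\DDelta-\II$), your treatment of monotonicity and supermodularity takes a genuinely different route. The paper proves these via Lemma~\ref{lem:dsir_supermodular} in Appendix~\ref{apx:dsir}: it invokes the M-matrix property of $\II-\MM_{-P}$ to get entrywise nonnegativity of $H(P)=(\II-\MM_{-P})^{-1}$, applies the Sherman--Morrison formula to show $H$ is entrywise nonincreasing under edge deletion, passes through an integral representation of the marginal difference to get entrywise supermodularity of $H$, and finally argues that the product of $H(P)$ with the nonnegative nonincreasing modular factor $(\MM_{-P}+\DDelta-\II)$ is supermodular. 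You instead rewrite $\hat\sigma(P)=-\1^\top\xx(0)+\1^\top\DDelta(\II-\MM_{-P})^{-1}\xx(0)$, which removes the $P$-dependent prefactor entirely, and expand the resolvent as a sum over walks with nonnegative weights; deleting an edge simply deletes the walks traversing it, so both monotonicity and the supermodular inequality become statements about shrinking index sets of a nonnegative series. This is more elementary (no M-matrix theory, no rank-one update formula, no product-of-set-functions argument) and makes explicit where the $P$-varying Sherman--Morrison denominator would otherwise need care; the cost is the bookkeeping you flag yourself --- entrywise nonnegativity of $\MM_{-P}$ and absolute convergence justifying the rearrangement --- both of which you correctly ground in the paper's standing assumptions $D_i<1$ and $\sum_j B_{ij}<1$ (note that the theorem's stated hypothesis $\norm{\MM_{-P}}<1$ alone does not give nonnegativity; the paper's own recursion $\xx(t)\le\MM_{-P}^t\xx(0)$ and its M-matrix argument quietly rely on the same standing assumptions, so you are not assuming more than the paper does).
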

Theorem~\ref{thm:dm_upper} shows that the objective function $\sigma(P)$ can be reduced efficiently by minimizing the supermodular function $\hat{\sigma}(P)$. 
We employ this fact to design a greedy algorithm, given in Algorithm~\ref{alg:greedy}.
The algorithm 
takes as input the function $\hat{\sigma}(P)$, the contact graph $G$, initial states of nodes, a candidate set $Q$, and an integer $k$. The algorithm  returns a set $P$ such that $\hat{\sigma}(\emptyset) - \hat{\sigma}(P)\geq (1-1/e)\left(\hat{\sigma}(\emptyset) - \hat{\sigma}(P^*)\right)\,,$ where $P^*$ is the optimal solution for $\sigma(P)$.

We show that the upper bound function $\hat{\sigma}(P)$ is monotone supermodular.

\begin{lemma}\label{lem:dsir_supermodular}
The function $\hat{\sigma}(P)$ is monotone supermodular with respect to the 
edge deletion set $P\subseteq Q$ when $\norm{\MM_{-P}}< 1$ for all set $P\subseteq Q$.
\end{lemma}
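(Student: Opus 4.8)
\emph{Step 1 (reformulation and M-matrix positivity).} Since $\MM_{-P}=\II-\DDelta+(\II-\XX(0)-\RR(0))\BB_{-P}$, one has the identity $\MM_{-P}+\DDelta-\II=-(\II-\MM_{-P})+\DDelta$, so that
\[
\hat\sigma(P)=\1^\top\DDelta(\II-\MM_{-P})^{-1}\xx(0)-\1^\top\xx(0).
\]
Because the second term does not depend on $P$, it suffices to prove that $g(P)\defeq\1^\top\DDelta\RR_P\xx(0)$ is monotone non-increasing and supermodular, where $\RR_P\defeq(\II-\MM_{-P})^{-1}$. For every $P\subseteq Q$ the matrix $\II-\MM_{-P}$ is a non-singular M-matrix: its off-diagonal entries equal $-(1-x_i(0)-r_i(0))(\BB_{-P})_{ij}\le 0$, and its eigenvalues have positive real part since $\mathrm{spr}(\MM_{-P})\le\norm{\MM_{-P}}<1$. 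By Lemma~\ref{lem:m-matrix}, $\RR_P\ge\0$ entrywise; hence the scalars $u_k(P)\defeq(\1^\top\DDelta\RR_P)_k$ and $v_k(P)\defeq(\RR_P\xx(0))_k$ are all nonnegative, and so is $g(P)$. The same remark applied to $P\cup\{e\}$ and $P\cup\{e,f\}$ (still subsets of $Q$) is used below.

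\emph{Step 2 (rank-one and rank-two update formulas).} For $e=(b,a)\in Q\setminus P$ set $w_e\defeq(1-x_a(0)-r_a(0))B_{ab}\ge 0$; this equals $(\MM_{-P})_{ab}$, is independent of $P$ as long as $e\notin P$, and deleting $e$ replaces $\MM_{-P}$ by $\MM_{-P}-w_e\ee_a\ee_b^\top$. Sherman--Morrison gives $\RR_{P\cup\{e\}}=\RR_P-\big(1+w_e(\RR_P)_{ba}\big)^{-1}w_e(\RR_P\ee_a)(\ee_b^\top\RR_P)$, hence
\[
g(P)-g(P\cup\{e\})=\frac{w_e\,u_a(P)\,v_b(P)}{1+w_e(\RR_P)_{ba}}\ \ge\ 0,
\]
so applying this along any chain proves that $g$ is monotone non-increasing. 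Reading the same identity coordinatewise yields, for every index $k$,
\[
u_k(P\cup\{e\})=u_k(P)-\frac{w_e(\RR_P)_{bk}}{1+w_e(\RR_P)_{ba}}\,u_a(P)\ \ge\ 0.
\]
For two distinct edges $e=(b,a),f=(d,c)\in Q\setminus P$ we have $\MM_{-(P\cup\{e,f\})}=\MM_{-P}-w_e\ee_a\ee_b^\top-w_f\ee_c\ee_d^\top$, and the rank-two Woodbury identity (applied with the two columns $w_e\ee_a,w_f\ee_c$ and $\ee_b,\ee_d$) gives, writing $\alpha\defeq 1+w_e(\RR_P)_{ba}$, $\delta\defeq 1+w_f(\RR_P)_{dc}$, $\Delta\defeq\alpha\delta-w_ew_f(\RR_P)_{bc}(\RR_P)_{da}$,
\[
g(P)-g(P\cup\{e,f\})=\frac{1}{\Delta}\Big(\delta w_e u_a v_b-w_ew_f(\RR_P)_{da}u_cv_b-w_ew_f(\RR_P)_{bc}u_av_d+\alpha w_f u_cv_d\Big),
\]
all $u$'s and $v$'s at $P$; here $\Delta>0$ because $\det(\II-\MM_{-(P\cup\{e,f\})})=\Delta\cdot\det(\II-\MM_{-P})$ and a non-singular M-matrix has positive determinant.

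\emph{Step 3 (supermodularity).} By the standard chaining argument, supermodularity follows once we verify, for every $P$ and distinct $e,f\in Q\setminus P$, the diminishing-returns inequality
\[
g(P)-g(P\cup\{e\})\ \ge\ g(P\cup\{f\})-g(P\cup\{e,f\}).
\]
Substituting the three formulas of Step 2 and clearing the positive common denominator $\alpha\delta\Delta$, the terms proportional to $\alpha\delta^2$ and to $\alpha^2\delta$ cancel, and (after dividing by $w_ew_f$, the case $w_ew_f=0$ giving equality) the inequality reduces to
\[
\delta(\RR_P)_{da}v_b\big(\alpha u_c-w_e(\RR_P)_{bc}u_a\big)+\alpha(\RR_P)_{bc}v_d\big(\delta u_a-w_f(\RR_P)_{da}u_c\big)\ \ge\ 0,
\]
all quantities at $P$. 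By the coordinate update formula of Step 2, $\alpha u_c-w_e(\RR_P)_{bc}u_a=\alpha\,u_c(P\cup\{e\})\ge 0$ and $\delta u_a-w_f(\RR_P)_{da}u_c=\delta\,u_a(P\cup\{f\})\ge 0$, while $\alpha,\delta,(\RR_P)_{da},(\RR_P)_{bc},v_b,v_d\ge 0$ by Step 1. Hence the left-hand side is nonnegative, which establishes the diminishing-returns inequality, and therefore the supermodularity of $g$ and of $\hat\sigma$.

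\emph{Expected main obstacle.} The delicate point is that in the single-edge marginal $w_eu_av_b/(1+w_e(\RR_P)_{ba})$, deleting a further edge shrinks both the numerator (the resolvent $\RR$ decreases entrywise) and the denominator, so monotonicity of the marginal is not immediate from monotonicity of $\RR_P$. Overcoming this is exactly why Step 2 expands $g(P)-g(P\cup\{e,f\})$ symmetrically via a single rank-two update rather than composing two rank-one updates: the symmetric form is what produces the cancellations in Step 3 and leaves an expression whose nonnegativity is read off from the M-matrix property of $\II-\MM_{-(P\cup\{e\})}$ and $\II-\MM_{-(P\cup\{f\})}$, i.e.\ from the fact that $u_c(P\cup\{e\})$ and $u_a(P\cup\{f\})$ are coordinates of nonnegative vectors.
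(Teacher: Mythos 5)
Your proof is correct. It rests on the same two pillars as the paper's argument --- inverse-positivity of the M-matrix $\II-\MM_{-P}$ (Lemma~\ref{lem:m-matrix}) and Sherman--Morrison updates of the resolvent --- but it executes the supermodularity step differently, and in two respects more cleanly. First, your Step~1 identity $\hat\sigma(P)=\1^\top\DDelta(\II-\MM_{-P})^{-1}\xx(0)-\1^\top\xx(0)$ collapses the objective to a single bilinear form in the resolvent; the paper instead keeps the product $(\MM_{-P}+\DDelta-\II)(\II-\MM_{-P})^{-1}$ and has to invoke the fact that the product of an entrywise nonnegative, nonincreasing modular factor with an entrywise nonnegative, nonincreasing supermodular factor is supermodular --- a step your reformulation makes unnecessary. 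Second, for the diminishing-returns inequality itself the paper represents the single-edge marginal of $H(P)=(\II-\MM_{-P})^{-1}$ as an integral of $c\,H_t\,\ee_i\ee_j^\top H_t$ along a continuous interpolation of the edge weight and deduces supermodularity from the entrywise monotonicity of the (nonnegative) integrand; you instead expand $g(P)-g(P\cup\{e,f\})$ via a symmetric rank-two Woodbury update and verify the inequality by explicit cancellation, identifying the residual terms as coordinates of the nonnegative vectors $\1^\top\DDelta(\II-\MM_{-(P\cup\{e\})})^{-1}$ and $\1^\top\DDelta(\II-\MM_{-(P\cup\{f\})})^{-1}$. Your route is more computational but entirely self-contained, and it confronts head-on the subtlety you flag at the end (numerator and denominator of the rank-one marginal both shrink), which the paper's integral argument sidesteps rather tersely. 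Your invocation of the pairwise diminishing-returns characterization of supermodularity via chaining is standard and fine.
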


The proof can be found in Appendix~\ref{apx:dsir}.
Then, we prove Theorem~\ref{thm:dm_upper}.

\begin{proof}[Proof of Theorem~\ref{thm:dm_upper}]
By Equation (\ref{eq:pdiscrete}) in the D-SIR model, the infected state for any time step $t$ is
\begin{align*}
\xx(t+1) =& \xx(t) -  (\DDelta + (\XX(0) + \RR(0)) \BB_{-P}  - \BB_{-P}) \xx(t) \\
&- (\XX(t)+\RR(t) - (\XX(0) + \RR(0)))(\BB_{-P}) \xx(t)  \\
\leq&  \xx(t) - (\DDelta + (\XX(0) + \RR(0))\BB_{-P} - \BB_{-P}  ) \xx(t) \\
=& (\II-\DDelta + (\II -\XX(0) - \RR(0) ) \BB_{-P}) \xx(t)\,,
\end{align*}
where the inequality holds because the entries of $(\XX+\RR)$ are 
monotonically
non-decreasing
as a function of time $t$.
By applying this inequality recursively, we have for any time step $t$
\begin{align}\label{eqn:iterative}
\xx(t) \leq \left(\MM_{-P}\right)^{t} \xx(0)\,.
\end{align}
Thus, the cumulative number of infections at any time step $t$ is
\begin{align*}
\norm{\mm(t)-\mm(0)}_1 
&\leq \1^\top (\II-\XX(0)-\RR(0))(\BB_{-P}) \sum_{t' = 0}^{t-1} \xx(t')\\
&\leq \1^\top    (\MM_{-P}+\DDelta-\II)(\II-\MM_{-P})^{-1} \xx(0)\,,
\end{align*}
where the last inequality is due to the geometric series of matrices. 
\end{proof}

Now we provide
a sufficient condition for $\norm{\MM_{-P}}<1, \forall P\subseteq Q$, which also ensures 
exponential stability of the 
healthy state
starting from the initial state $\xx(0)$ and $\rr(0)$. Similar condition for the SIS model has been given in~\cite[Theorem 4]{GPSJ20}.
\begin{theorem}[Sufficient condition for exponentially stable]
\label{sufficient:theorem}
If there exists a positive number $\epsilon$ such that for any $i\in [n]$, $(1-x_i(0)-r_i(0)) \sum_{j=1}^n  B_{ij} \leq D_{i} - \epsilon$, then the spectral norm $\norm{\MM_{-P}} \leq 1-\epsilon$. Moreover, under this condition, $\hat{\xx}=0$ is the unique equilibrium and the system is exponentially stable.
\end{theorem}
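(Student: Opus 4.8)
The plan is to exploit that, under the paper's standing assumptions $D_i<1$ and $x_i(0)+r_i(0)\le 1$, the matrix $\MM_{-P}=\II-\DDelta+(\II-\XX(0)-\RR(0))\BB_{-P}$ is entrywise non-negative, so its decay is governed by its maximum absolute row sum. First I would check non-negativity: the diagonal entries are $1-D_i+(1-x_i(0)-r_i(0))(\BB_{-P})_{ii}\ge 0$ (here $D_i<1$ is used), and the off-diagonal entries $(1-x_i(0)-r_i(0))(\BB_{-P})_{ij}$ are non-negative as well. The $i$-th row sum is then
\[
\sum_{j=1}^n (\MM_{-P})_{ij}=1-D_i+(1-x_i(0)-r_i(0))\sum_{j=1}^n (\BB_{-P})_{ij}\le 1-D_i+(1-x_i(0)-r_i(0))\sum_{j=1}^n B_{ij},
\]
where the inequality holds because $\BB_{-P}$ is obtained from the non-negative matrix $\BB$ by zeroing out entries. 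Plugging in the hypothesis $(1-x_i(0)-r_i(0))\sum_j B_{ij}\le D_i-\epsilon$ yields a row sum of at most $1-\epsilon$ for every $i$. Hence the induced $\ell_\infty$ operator norm of $\MM_{-P}$ — and therefore its spectral radius, which is what the Neumann-series step of Theorem~\ref{thm:dm_upper} actually needs for $(\II-\MM_{-P})^{-1}=\sum_t(\MM_{-P})^t$ to converge — is at most $1-\epsilon<1$.

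For the equilibrium claim I would argue directly from the recursion. If $(\hat\xx,\hat\rr)$ is an equilibrium, then~(\ref{eq:rdiscrete}) forces $\DDelta\hat\xx=\0$; since the hypothesis implies $D_i\ge\epsilon>0$ for every $i$, $\DDelta$ is invertible and so $\hat\xx=\0$, while $\hat\xx=\0$ trivially satisfies~(\ref{eq:pdiscrete})--(\ref{eq:rdiscrete}). Thus the disease-free state is the unique equilibrium in the infected component.

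For exponential stability I would reuse the comparison bound $\0\le\xx(t)\le(\MM_{-P})^t\xx(0)$ established in~(\ref{eqn:iterative}): since $(\MM_{-P})^t\xx(0)\ge\0$, monotonicity of $\norm{\cdot}_\infty$ on the non-negative orthant together with sub-multiplicativity and the row-sum bound above gives $\norm{\xx(t)}_\infty\le(1-\epsilon)^t\norm{\xx(0)}_\infty$, which decays geometrically to $\0$. Then $\rr(t)=\rr(0)+\DDelta\sum_{t'=0}^{t-1}\xx(t')$ converges, with its tail $\DDelta\sum_{t'\ge t}\xx(t')$ bounded in $\ell_\infty$ norm by $\tfrac{(1-\epsilon)^t}{\epsilon}\norm{\xx(0)}_\infty$ (using $D_i<1$ and summing the geometric series); hence $\mm(t)=\xx(t)+\rr(t)$ approaches its limit at a geometric rate, which is exponential stability of $\hat\xx=\0$.

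The argument is essentially bookkeeping once non-negativity is in hand. The only points needing care are (i) confirming $\MM_{-P}\ge\0$ entrywise, which is exactly where the standing assumptions $D_i<1$ and $x_i(0)+r_i(0)\le1$ enter, and (ii) being precise that "unique equilibrium" pins down only the infected component $\xx$, the $\rr$-component being whatever the dynamics accumulate. I do not expect a substantive obstacle; the comparison with~\cite[Theorem~4]{GPSJ20} for the SIS model indicates that the same row-sum / M-matrix reasoning transfers.
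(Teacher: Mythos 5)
Your proposal is correct and follows essentially the same route as the paper: the paper bounds the row sums of $\MM_{-P}$ (diagonal $1-D_i$, off-diagonal sum at most $(1-x_i(0)-r_i(0))\sum_j B_{ij}$) and invokes the Gershgorin circle theorem, which for this non-negative matrix is exactly your induced $\ell_\infty$-norm bound of $1-\epsilon$, and then both arguments use the comparison $\0\le\xx(t+1)\le\MM_{-P}\xx(t)$ to get geometric decay and conclude exponential stability of the origin. Your added bookkeeping on uniqueness of the equilibrium via $\DDelta\hat\xx=\0$ and on the convergence of $\rr(t)$ only makes explicit what the paper leaves implicit.
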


\begin{proof}
For simplicity of notation, we use $\MM$ to denote the matrix $\MM_{-P}$ for any 
edge  
deletion 
set $P$. For any $i \in [n]$, we have 
that 
the entries of $\MM$ are
\begin{align*}
M_{ii} = 1 - D_{i}, \qquad \sum_{j\not= i} \lvert M_{ij}\rvert \leq  (1-x_i(0)-r_i(0))  \sum_{j=1}^n  B_{ij}.
\end{align*}
By Gershgorin circle theorem, the spectral norm of $\MM$ is bounded by $1-\epsilon$.
Thus, this transition matrix is a contraction mapping. Since we have $\xx(t+1) \leq \MM \xx(t)$ and $\xx(t)\geq\0$ for any $t$, we obtain $\norm{\xx(t+1)} \leq (1-\eps) \norm{\xx(t)}$ for any $t$. Therefore, by \eqref{eqn:iterative}, $\norm{\xx(t)} \leq (1-\eps)^{t}\norm{\xx(0)}\leq e^{-t\eps}\norm{\xx(0)}$, which means the state $\xx$ converges to the origin with a rate of at least $\eps$. Then we conclude that $\hat{\xx} = 0$ is the unique equilibrium and the considered system is exponentially stable.
\end{proof}

\begin{remark}
Alternative conditions can be given for Theorem~\ref{thm:dm_upper}. 
Given that $\norm{\MM_{-\emptyset}} < 1$ and 
$\MM_{-\emptyset}$ is irreducible, 
the system is exponentially stable for any 
deletion 
edge set $P \subseteq Q$, followed from $\norm{\MM_{-P}}<1$. 
The reasoning is given as follows: 
According to the Perron-Frobenius Theorem~\cite{maccluer2000many}, the spectral norm of $\MM_{-\emptyset}$ is its Perron root $\lambda_{\max}$. Due to the Wielandt's Theorem~\cite{maccluer2000many}, the spectral norm, upon deleting  edges, is less than
or equal to $\lambda_{\max}$. 
\end{remark}

Convex relaxation is widely used as an approach to design heuristic algorithms or approximation algorithms by using a proper rounding. In addition, the supermodularity of a problem sometimes coincides with the convexity of a relaxed problem; for example,  see~\cite{MA19}. However, we show that a direct relaxation of $\hat{\sigma}$ is not a convex function with respect to the edge weights.
\begin{example}
We consider two undirected (bidirectional) graphs $G_1$ and $G_2$ which
have two copies of the same vertex set $V=\{1,2,3\}$. $G_1$ has edges $\{\{1,2\},\{1,3\}\}$ and $G_2$ has edges $\{\{1,2\},\{2,3\}\}$. Let the infection rate $B_{ij}=1/12$ for each edge $(j,i)$ and the healing rate $D_{i}=1/4$ for each node $i$. Let $\BB_{(1)}$ and $\BB_{(2)}$ be the transition matrices of $G_1$ and $G_2$, respectively. We let $\xx(0)=[1,0,0]^\top$, $\rr(0)=[0,0,0]^\top$. Then we have $\MM_{(i)}= \II-\DD+(\II-\XX(0)-\RR(0))\BB_{(i)}$ for $i\in\{1,2\}$, and define $\hat{\sigma}(\MM)= \1^\top (\MM+\DDelta-\II)(\II-\MM)^{-1} \xx(0)$. Thus, we obtain $\hat{\sigma}(\MM_{(1)})=2/3$, $\hat{\sigma}(\MM_{(2)})=1/2$, and $\hat{\sigma}(\frac{\MM_{(1)}+\MM_{(2)}}{2})=3/5$. Thus, $\frac{\hat{\sigma}(\MM_{(1)})+\hat{\sigma}(\MM_{(2)})}{2}<\hat{\sigma}(\frac{\MM_{(1)}+\MM_{(2)}}{2})$ yields the non-convexity result.
\end{example}

\section{Supermodularity for the IC-SIR Model in Random Graphs}\label{sec:IC_R}

In the IC-SIR model, the spreading process only uses the activated edges. The activated edges are sampled from the contact network $G$, which is given as the input of any algorithm. The contagion network $\widetilde{G}$, composed of the activated edges, is randomly generated from $G$. We assume the set of seeds is first fixed before the contact network and the contagion network are generated. We also assume the initial removed state $\rr(0)=\0$ without 
loss
of generality.

In Subsection~\ref{random.sec} we consider the case where the contact network is a complete graph. Then, the contagion network $\widetilde{G}$ is 
sampled
from an ER random graph $\calG(n,p)$, where $p$ is the connection probability.
In Subsection~\ref{sec:ic_random_contact} we consider the case where the contact network $G$ is generated from an ER random graph $\calG(n,p_1)$,
and
the contagion network $\widetilde{G}$ is sampled from $G$ with probability $p$ for each edge in~$G$. 

\subsection{Expected Number of Infections for a Complete Contact Network}
\label{random.sec}

Next we consider the expected number of infections $\expec{}{\tilde{\sigma}(P)}$, which is the objective in Problem~\ref{prob:ic}. The expectation is taken over the contagion network sampled from $\calG(n,p)$, where the probability $p = d/n$ for some constant $d < 1$. 
We prove that $\expec{}{\tilde{\sigma}(P)}$ is a $(1\pm o(1))$-approximation of a supermodular function.

We use $\tau_1$ to denote the event that the contagion network includes connected components with size greater than $L = 9(1-d)^{-2}\ln{n}$. Let $\tau_2$ be the event that there is at least one component with more than one seed. The complements of these events are denoted as $\bar{\tau}_1$ and $\bar{\tau}_2$, respectively. Let $\rho(P)$ denote the number of infected vertices in connect components that are trees with at least one seed. 

\begin{theorem}\label{thm:ic_approx}
If the number of seeds $s=O(n^{\frac{1}{3}-c})$ for any constant $c>0$, then for all $P\subseteq Q$,
\begin{align*}
\expec{\widetilde{G}}{\tilde{\sigma}(P)} &\approx_{o(1)} \expec{\widetilde{G}}{\tilde{\sigma}'(P)}\,, 
\end{align*}
where $\expec{\widetilde{G}}{\tilde{\sigma}'(P)}\defeq\expec{\widetilde{G}}{\rho(P) \mid \bar{\tau}_1,\bar{\tau}_2}$ is a monotone supermodular function. 
\end{theorem}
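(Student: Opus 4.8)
The plan is to show that the difference between $\expec{}{\tilde\sigma(P)}$ and the "idealized" quantity $\expec{}{\rho(P)\mid\bar\tau_1,\bar\tau_2}$ is $o(1)$ times the latter, uniformly in $P\subseteq Q$, and separately that the idealized quantity is monotone supermodular. The decomposition into these two tasks is the skeleton of the argument.

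For the approximation claim, I would first recall standard facts about subcritical ER graphs $\calG(n,d/n)$ with $d<1$: with high probability every connected component has size $O(\log n)$ — indeed at most $L=9(1-d)^{-2}\ln n$ by a Chernoff-type bound on the exploration process — so $\prob{}{\tau_1}=o(1)$; and the number of vertices lying on cycles (equivalently in non-tree components) is $o(n)$, in fact $O(1)$ in expectation, so the difference between $\tilde\sigma(P)$ and $\rho(P)$ coming from non-tree components is negligible. Next I would bound $\prob{}{\tau_2}$: since the $s=O(n^{1/3-c})$ seeds are placed first and components have size $O(\log n)$, the probability that two seeds fall in a common component is at most roughly $s^2\cdot O(\log n)/n = O(n^{-2c})\cdot\mathrm{polylog}(n)=o(1)$ (a birthday-type estimate, using that the component of a fixed vertex has expected size $O(1)$ and is concentrated below $L$). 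Now the key chain of inequalities: $\expec{}{\tilde\sigma(P)}$ and $\expec{}{\rho(P)}$ differ by at most the expected number of infected vertices in (a) non-tree components with a seed and (b) tree components with $\ge 2$ seeds; both are $o(1)$ in absolute terms after conditioning adjustments, while $\expec{}{\rho(P)\mid\bar\tau_1,\bar\tau_2}\ge \expec{}{\rho(Q)\mid\bar\tau_1,\bar\tau_2}\ge s(1-o(1))$ since each seed infects at least itself and at least a constant expected number more when $d$ is bounded away from $0$ — wait, even if $d$ is tiny, each seed contributes at least $1$, so the idealized quantity is $\ge s(1-o(1))\ge 1$. Hence the additive $o(1)$ error is $o(1)$ relative to the target, giving $\approx_{o(1)}$. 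Conditioning on $\bar\tau_1\cap\bar\tau_2$ changes the expectation of $\rho$ by a further $(1\pm o(1))$ factor since that event has probability $1-o(1)$ and $\rho\le n$; this needs a short argument that the conditional and unconditional expectations of $\rho/s$ are comparable.

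For monotone supermodularity of $g(P)\defeq\expec{}{\rho(P)\mid\bar\tau_1,\bar\tau_2}$, the point is that on the event $\bar\tau_1\cap\bar\tau_2$ the infected set is exactly the union of the tree components containing the (distinct, well-separated) seeds, so $\rho(P)=\sum_{v\in\mathrm{seeds}} |C_v(P)|$ where $C_v(P)$ is the component of seed $v$ in the contagion network after deleting $P$. Deleting an edge $e$ from $P$ can only shrink (or leave unchanged) each $C_v$, giving monotonicity. For supermodularity, I would argue per seed and per sampled contagion network realization: if the realized contagion graph $\widetilde G$ is a forest, then for a fixed seed $v$ and a fixed edge $e$, the decrease $|C_v(\widetilde G - P)| - |C_v(\widetilde G - P - e)|$ is either $0$ (if $e\notin C_v(\widetilde G-P)$) or the size of the subtree cut off by $e$, and as $P$ grows the component $C_v$ only shrinks, so the subtree cut off by $e$ (when nonempty) only shrinks — this is precisely the supermodular inequality $g(P_1)-g(P_1\cup\{e\})\ge g(P_2)-g(P_2\cup\{e\})$ for $P_1\subseteq P_2$, realization by realization. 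Summing over seeds and taking the (conditional) expectation preserves it, since supermodularity is closed under nonnegative combinations. The mild subtlety is that conditioning on $\bar\tau_1,\bar\tau_2$ is a conditioning on a global event that interacts with edge deletion; I would handle this by noting the event $\bar\tau_1\cap\bar\tau_2$ is defined on the \emph{original} contagion network (before deletions), so the conditioning is on a fixed random object and the per-realization argument above goes through verbatim after conditioning.

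The main obstacle I anticipate is the uniformity over all $P\subseteq Q$ in the approximation step: the high-probability events $\bar\tau_1,\bar\tau_2$ concern the contagion network and do not depend on $P$, which helps, but one must be careful that the additive error bounds (expected infections in bad components) are genuinely $o(1)$ and not merely $o(n)$ — this hinges on the subcritical bound $d<1$ forcing the expected number of cycle-vertices to be $O(1)$ and the seed-collision probability to be $O(s^2\log n/n)=o(1)$, which is exactly where the hypothesis $s=O(n^{1/3-c})$ is consumed (with room to spare; the $n^{1/3}$ threshold presumably gives slack for the later SBM generalization or for a cruder union bound). A secondary technical point is justifying the replacement of $\expec{}{\tilde\sigma(P)}$ (which counts \emph{all} eventually-removed vertices) by $\expec{}{\rho(P)}$ (tree components only): on $\bar\tau_1$ every component is small, and the expected number of vertices in components that contain a cycle is $O(1)$, so this contributes only $o(1)$ additively — again dominated by the $\ge s$ lower bound on the target.
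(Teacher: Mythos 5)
Your proposal follows essentially the same route as the paper: the same decomposition into the bad events $\tau_1$ (a component of size exceeding $L=O(\log n)$), $\tau_2$ (seed collision), and the cycle-component correction $y(P)=\tilde\sigma(P)-\rho(P)$, with the same Chernoff/exploration, birthday, and expected-cycle-count bounds, and the same per-realization tree argument (closed under conditional expectation) for monotone supermodularity. The only quibble is bookkeeping: the binding use of $s=O(n^{1/3-c})$ comes from the term $sL\cdot\prob{}{\bar\tau_1,\tau_2}=O(s^3L^2/n)$ (infections in a collision event times its probability), not merely from the collision probability itself, but this falls out of exactly the estimates you already have.
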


Theorem~\ref{thm:ic_approx} shows that we can directly optimize the objective function as an approximation to a monotone supermodular function.
We employ this fact to design a greedy algorithm, given in Algorithm~\ref{alg:greedy}.
In particular, Algorithm~\ref{alg:greedy} takes as inputs the function $\expec{\widetilde{G}}{\tilde{\sigma}'(P)}$, initial states of nodes, a candidate set $Q$, and an integer $k$. Assuming  $\expec{\widetilde{G}}{\tilde{\sigma}(\emptyset)}-\expec{\widetilde{G}}{\tilde{\sigma}(P^*)}$ is at least $1$, the algorithm returns a set $P$ such that 
\begin{align*}
\expec{\widetilde{G}}{\tilde{\sigma}(\emptyset)} - \expec{\widetilde{G}}{\tilde{\sigma}(P)} \geq (1-1/e-o(1)-\eps)\left(\expec{\widetilde{G}}{\tilde{\sigma}(\emptyset)}-\expec{\widetilde{G}}{\tilde{\sigma}(P^*)}\right),
\end{align*} 
where $P^*$ is the optimum solution for $\expec{\widetilde{G}}{\tilde{\sigma}(P)}$, and the error term $\eps$ is from a $(1\pm \eps)$-approximation of $\expec{\widetilde{G}}{\tilde{\sigma}'(P)}$ and its marginal gains. 
We will discuss the efficient computation of $\expec{\widetilde{G}}{\tilde{\sigma}'(P)}$ in  Section~\ref{dis:effiency}.

For the sake of analysis, we consider a modified process, which 
first generates
the random graph and then chooses
seeds randomly. 
As long as the information of the graph $\widetilde{G}$ is not revealed, 
this process
is equivalent to the one
where seeds are first chosen and 
then the random graph is generated.


To prove Theorem~\ref{thm:ic_approx}, we prepare Lemmas~\ref{lem:cc_size},~\ref{lem:seed_collision}, and~\ref{lem:cycle}. 
The proofs of these lemmas 
can be found in
Appendix~\ref{apx:icsir_random}.

First, we show that the event $\tau_1$ happens with a small probability. 
\begin{lemma}\label{lem:cc_size}
The probability that the random graph (with $p<1/n$) contains a connected component with size greater than $L= 9(1-d)^{-2}\ln{n}$ is at most
\begin{align*}
\prob{}{\tau_1} \leq n^{-2}.
\end{align*}
\end{lemma}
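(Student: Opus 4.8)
The plan is to control the component containing a \emph{fixed} vertex and then take a union bound over all $n$ vertices. If some connected component has more than $L$ vertices, then the component $C(v)$ of at least one vertex $v$ satisfies $|C(v)| > L$, so by vertex symmetry
\begin{align*}
\prob{}{\tau_1}\ \le\ \sum_{v\in V}\prob{}{|C(v)|\ge L}\ =\ n\,\prob{}{|C(v_0)|\ge L},
\end{align*}
and it suffices to prove $\prob{}{|C(v_0)|\ge L}\le n^{-3}$ for a fixed $v_0$, where $d\defeq np<1$.

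To bound $\prob{}{|C(v_0)|\ge L}$ I would run the usual breadth-first exploration of $C(v_0)$: process discovered-but-unexplored vertices one at a time, and at step $i$ reveal the not-yet-examined edges from the current vertex to the still-undiscovered set, letting $X_i$ be the number of newly discovered vertices. Each $X_i$ is stochastically dominated by $Y_i\sim\mathrm{Bin}(n,p)$, and fresh edge randomness at each step lets the $Y_i$ be taken i.i.d. With $Z_t\defeq 1-t+\sum_{i=1}^t X_i$ the number of active vertices after $t$ steps, $|C(v_0)|$ is the first time $Z_t$ reaches $0$, so $\{|C(v_0)|\ge L\}\subseteq\{Z_{L-1}\ge 1\}=\{\sum_{i=1}^{L-1}X_i\ge L-1\}$ and hence
\begin{align*}
\prob{}{|C(v_0)|\ge L}\ \le\ \prob{}{\textstyle\sum_{i=1}^{L-1}Y_i\ \ge\ L-1},
\end{align*}
where $\sum_{i=1}^{L-1}Y_i\sim\mathrm{Bin}\big((L-1)n,\,p\big)$ has mean $(L-1)np=(L-1)d$.

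The key estimate is a multiplicative Chernoff bound: for $W\sim\mathrm{Bin}(N,p)$ with mean $\mu$ and any $\delta>0$, $\prob{}{W\ge(1+\delta)\mu}\le\exp\!\big(-\mu((1+\delta)\ln(1+\delta)-\delta)\big)$. Taking $\mu=(L-1)d$ and $1+\delta=1/d$ gives
\begin{align*}
\prob{}{|C(v_0)|\ge L}\ \le\ \exp\!\big(-(L-1)(d-1-\ln d)\big).
\end{align*}
I would then use the elementary inequality $d-1-\ln d\ge\tfrac12(1-d)^2$ for $d\in(0,1)$, immediate from the Taylor series of $-\ln d$ about $d=1$ (every term beyond the linear one is nonnegative). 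Substituting $L=9(1-d)^{-2}\ln n$ and using $(1-d)^2\le 1$, the exponent is at least $\tfrac12(L-1)(1-d)^2=\tfrac12\big(9\ln n-(1-d)^2\big)\ge\tfrac12(9\ln n-1)\ge 3\ln n$ for every $n\ge 2$; thus $\prob{}{|C(v_0)|\ge L}\le n^{-3}$ and the union bound yields $\prob{}{\tau_1}\le n^{-2}$.

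The main obstacle is obtaining a tail bound strong enough \emph{uniformly over all $d<1$}: a crude first-moment/tree-counting estimate such as $\prob{}{|C(v_0)|=k}\le\binom{n-1}{k-1}k^{k-2}p^{k-1}$ only decays like $(de)^{k}$ and so needs $d<1/e$, which is far too restrictive. Routing through the exploration process and keeping the \emph{exact} branching-process rate $d-1-\ln d$ (rather than crudely bounding it before plugging in $L$) is what produces the correct $(1-d)^{-2}$ dependence; the only remaining care is the harmless off-by-one between ``$|C(v_0)|\ge L$'' and the $L-1$ exploration steps, which the slack $9/2>3$ comfortably absorbs.
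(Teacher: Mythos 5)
Your proof is correct and follows essentially the same route as the paper's: couple the component of each vertex with a branching/exploration process whose offspring counts are dominated by i.i.d.\ $\mathrm{Bin}(n,p)$ variables, reduce the event to a binomial tail $\prob{}{\mathrm{Bin}(Ln,p)\gtrsim L}$, apply a Chernoff bound, and finish with a union bound over the $n$ vertices. The only difference is cosmetic — you carry the exact rate function $d-1-\ln d$ and then invoke $d-1-\ln d\ge\tfrac12(1-d)^2$, whereas the paper quotes the Chernoff bound directly in the form $\exp\left(-(1-d)^2 t/3\right)$ — and both yield the required $n^{-3}$ per-vertex bound.
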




We also want to bound the probability of the seeds collision, i.e.\ the event that two seeds are sampled from the same component. This can be seen as a $\textit{balls into bins}$ problem. 

\begin{lemma}\label{lem:seed_collision}
The probability that there is at least one component with more than one seed conditioned on event $\bar{\tau}_1$ is
\begin{align*}
\prob{}{\tau_2 \mid \bar{\tau}_1} \leq \frac{2s^2L}{n}, 
\end{align*}
where $s$ is the number of seeds as defined in Problem~\ref{prob:ic}.
\end{lemma}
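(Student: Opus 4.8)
The plan is to condition on the realization of the random graph and reduce the statement to an elementary balls-into-bins estimate. I would work in the modified process introduced before Theorem~\ref{thm:ic_approx}: first generate $\widetilde{G}\sim\calG(n,p)$, then pick the $s$ seeds uniformly at random among all $s$-subsets of $[n]$. In this formulation the seed set is independent of $\widetilde{G}$, so conditioning on any graph event (in particular on $\bar{\tau}_1$) leaves the seed distribution uniform over $s$-subsets. It therefore suffices to prove the bound for the conditional probability of $\tau_2$ given an arbitrary fixed graph $G'$ that realizes $\bar{\tau}_1$, and then average over such $G'$.

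Fix such a $G'$ and let its connected components be $C_1,\dots,C_m$ with sizes $n_1,\dots,n_m$, so that $\sum_{k=1}^m n_k=n$ and, by $\bar{\tau}_1$, $n_k\le L$ for every $k$. For a fixed unordered pair of distinct seeds, the probability that both land in the same component is
\begin{align*}
\sum_{k=1}^m \frac{n_k(n_k-1)}{n(n-1)} \;\le\; \frac{1}{n(n-1)}\sum_{k=1}^m n_k L \;=\; \frac{L}{n-1},
\end{align*}
using only $n_k-1\le n_k\le L$ and $\sum_k n_k=n$. A union bound over the $\binom{s}{2}\le s^2/2$ pairs of seeds then gives that, conditionally on $G'$, the event $\tau_2$ has probability at most $\frac{s^2}{2}\cdot\frac{L}{n-1}\le\frac{s^2L}{n}\le\frac{2s^2L}{n}$ for $n\ge2$. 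Since this holds for every $G'$ realizing $\bar{\tau}_1$, averaging over the conditional law of $\widetilde{G}$ yields $\prob{}{\tau_2\mid\bar{\tau}_1}\le\frac{2s^2L}{n}$.

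There is no serious obstacle here; the only point requiring care is the independence invoked in the first step, which is precisely why the graph-first/seeds-second reformulation is used — without it, conditioning on the component-size event could a priori distort the seed distribution. The slack between the derived $\frac{s^2L}{n}$ and the stated $\frac{2s^2L}{n}$ also comfortably absorbs the with-replacement variant (seeds sampled i.i.d.\ uniformly, as in the Remark after Problem~\ref{prob:ic}), for which the pairwise collision probability is $\sum_k(n_k/n)^2\le L/n$ and the identical union bound applies.
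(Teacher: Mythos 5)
Your proof is correct and follows essentially the same route as the paper: pass to the graph-first/seeds-second process so the seeds are uniform and independent of $\bar{\tau}_1$, then union bound over the $\binom{s}{2}$ seed pairs using the size bound $L$ on components. The only difference is cosmetic: the paper first packs the components into at most $\lceil 2n/L\rceil$ bins of capacity $L$ and union bounds over bins, whereas you bound the pairwise collision probability directly via $\sum_k n_k(n_k-1)/\bigl(n(n-1)\bigr)\le L/(n-1)$, which is slightly cleaner and yields the same $O(s^2L/n)$ bound.
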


 
 
We use $y(P) = \tilde{\sigma}(P) - \rho(P)$ to denote the number of vertices in 
connect components that contain
at least one seed and one cycle. The following lemma follows in a similar manner to~\cite[Theorem 8.9]{BHK20}. 

\begin{lemma}\label{lem:cycle}
Given that the events $\bar{\tau}_1,\bar{\tau}_2$ happen, the expected number of vertices in 
connect components that contain
at least one seed and one cycle satisfies
\begin{align*}
    \expec{}{y(P) \mid \bar{\tau}_1,\bar{\tau}_2} \leq \frac{sL^2d^3}{2n(1-d)}\cdot\frac{1}{\prob{}{\bar{\tau}_1,\bar{\tau}_2}}\,.
\end{align*}
\end{lemma}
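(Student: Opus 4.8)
The plan is to bound $\expec{}{y(P) \mid \bar\tau_1,\bar\tau_2}$ by first writing $\expec{}{y(P) \mid \bar\tau_1,\bar\tau_2} = \expec{}{y(P)\cdot\mathbf{1}[\bar\tau_1,\bar\tau_2]}/\prob{}{\bar\tau_1,\bar\tau_2} \le \expec{}{y(P)\cdot\mathbf{1}[\bar\tau_1]}/\prob{}{\bar\tau_1,\bar\tau_2}$, and then bounding the numerator $\expec{}{y(P)\cdot\mathbf{1}[\bar\tau_1]}$. So the main task reduces to an unconditional estimate of the expected number of vertices lying in components that simultaneously (i) contain at least one seed, (ii) contain a cycle, and (iii) have size at most $L$. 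This is where the $\tfrac{1}{\prob{}{\bar\tau_1,\bar\tau_2}}$ factor in the statement comes from, and the rest of the bound must account for the remaining factor $\tfrac{sL^2 d^3}{2n(1-d)}$.

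For the numerator I would use a union-bound / first-moment argument over vertices. Fix a vertex $v$; I want to bound the probability that $v$ lies in a component that contains a seed and a cycle and has size at most $L$. Since edges are sampled with $p = d/n$, the probability that $v$ lies in a component containing a cycle is governed by the expected number of cycles through $v$ within a component of bounded size: a cycle of length $\ell$ through $v$ arises with probability roughly $\binom{n}{\ell-1}(\ell-1)!\,p^\ell \le (np)^\ell/\ell = d^\ell/\ell$, but because we restrict to components of size $\le L$ and want the cycle reachable from $v$, the dominant contribution is the shortest cycles; summing $\sum_{\ell\ge 3} d^\ell/\ell$ is $O(d^3)$ for $d<1$ (this is where the $d^3$ and the $\tfrac{1}{1-d}$-type geometric tail appear — more carefully $\sum_{\ell \ge 3} d^{\ell} \le \frac{d^3}{1-d}$). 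Separately, conditioning on $v$'s component having size at most $L$, the probability that this component contains at least one of the $s$ seeds is at most (size of component)$\cdot s/n \le sL/n$ by a union bound over seeds (seeds are placed uniformly in the modified process described before the lemma). Multiplying these and summing over the $n$ choices of $v$ gives roughly $n \cdot \frac{d^3}{1-d}\cdot \frac{sL}{n}$, and the extra factor of $L$ (versus $L^2$) plus constants needs to be tracked — the second power of $L$ likely comes from the fact that we count \emph{vertices} in such components, so each qualifying component of size up to $L$ contributes up to $L$ vertices, giving the $L^2$. I would mirror the structure of the cited argument in~\cite[Theorem 8.9]{BHK20}, which handles exactly the expected number of vertices on components containing a cycle in a subcritical ($d<1$) Erd\H{o}s--R\'enyi graph.

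The step I expect to be the main obstacle is making the ``seed times cycle times bounded size'' events play together cleanly in a single first-moment bound without double-counting or losing the constant $2$ in the denominator. The cycle-counting and the seed-hitting events are not independent (both depend on the component of $v$), so I would handle this by first exposing the component of $v$ via a breadth-first search, truncating the analysis at size $L$ (legitimate on the event $\bar\tau_1$), then within that revealed component bounding separately the conditional probability of containing a cycle (using that the number of ``extra'' edges beyond a spanning tree on $\le L$ vertices is small, of expected order $\binom{L}{2}p \approx L^2 d/(2n)$ — this is an alternative route that produces the $L^2 d/n$ factor directly) and the conditional probability of containing a seed ($\le Ls/n$). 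The cleaner of the two routes is probably: $\Pr[\text{$v$'s component has a cycle, size} \le L] \le \expec{}{\#\text{surplus edges in }v\text{'s component}\mid \text{size}\le L}$, and then combine with the seed union bound; reconciling which route yields exactly the stated constant is the fiddly part, and since the excerpt defers the proof to Appendix~\ref{apx:icsir_random}, I would present the cleaner route and absorb slack into the constants, checking at the end that the claimed $\tfrac{sL^2 d^3}{2n(1-d)}\cdot\tfrac{1}{\prob{}{\bar\tau_1,\bar\tau_2}}$ bound is indeed what survives (noting $d^3/(1-d) \le 1/(1-d)$ so there is room).
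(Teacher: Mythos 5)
Your overall skeleton matches the paper's: you drop the conditioning on $\bar{\tau}_2$ and pay the factor $1/\prob{}{\bar{\tau}_1,\bar{\tau}_2}$, you use the seed-hitting union bound of order $sL/n$ per qualifying component, you attribute an extra factor of $L$ to counting vertices rather than components, and you correctly identify the cycle count in the subcritical regime (via the same reference) as the source of $d^3/(2(1-d))$. You also correctly note that on $\bar{\tau}_1$ one may truncate all components at size $L$. One small omission: the bound must hold uniformly in $P$; the paper disposes of this in one line by observing that deleting edges only destroys cycles, so $y(P)\leq y(\emptyset)$ and it suffices to work with the unmodified graph.

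The genuine gap is in the cycle-counting step, where you oscillate between two per-vertex routes, neither of which cleanly produces the stated bound. The ``expected number of cycles through $v$'' route undercounts: a component containing $v$ may owe its cycle to vertices far from $v$, so bounding $\prob{}{v\text{'s component is cyclic}}$ requires also summing over paths from $v$ to a cycle, which you do not do. The ``surplus edges'' route, as you compute it, yields roughly $n\cdot\frac{L^2d}{2n}\cdot\frac{sL}{n}=\frac{sL^3d}{2n}$, which has the wrong dependence on $L$ and $d$ (it would still be $o(1)$ for the downstream Theorem~\ref{thm:ic_approx}, but it is not the lemma as stated). The paper's resolution, which you are circling but do not land on, is to abandon the per-vertex viewpoint for the cycle count: the expected \emph{total} number of cycles $x$ in $\calG(n,p)$ satisfies $\expec{}{x}\leq\sum_{z\geq 3}\binom{n}{z}\frac{(z-1)!}{2}p^z\leq\frac{d^3}{2(1-d)}$, with no $L$ dependence, and the number of cyclic components is trivially at most $x$. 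Only then does one multiply by $L/n$ per seed (probability a uniform seed lands in any one of these $\leq x$ components, each of size $\leq L$ on $\bar{\tau}_1$), by $s$ seeds, and by $L$ vertices per component, giving $\expec{\mathcal{S}}{y\mid\widetilde{G},\bar{\tau}_1}\leq L\cdot\frac{sxL}{n}$ and hence $\frac{sL^2}{n}\expec{}{x}\leq\frac{sL^2d^3}{2n(1-d)}$ after taking the expectation over $\widetilde{G}$. Decoupling the global cycle count from the component-size and seed factors is exactly what makes the constants come out as stated.
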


Now we prove the main theorem.
\begin{proof}[Proof of Theorem~\ref{thm:ic_approx}]

First, we show that $\expec{\widetilde{G}}{\rho(P) \mid \bar{\tau}_1,\bar{\tau}_2}$ is a good approximation of the expected number of infections $\expec{\widetilde{G}}{\tilde{\sigma}(P)}$.

By using the law of total expectation, the expected number of infections $\tilde{\sigma}(P)$ can be divided into three parts as follows,
\begin{align*}
\expec{\widetilde{G}}{\tilde{\sigma}(P)} = \expec{\widetilde{G}}{\tilde{\sigma} \mid \tau_1} \prob{}{\tau_1} + \expec{\widetilde{G}}{\tilde{\sigma} \mid \bar{\tau}_1,\tau_2} \prob{}{\bar{\tau}_1,\tau_2} + \expec{\widetilde{G}}{\tilde{\sigma} \mid \bar{\tau}_1,\bar{\tau}_2} \prob{}{\bar{\tau}_1,\bar{\tau}_2}.
\end{align*}
Then, we give upper bounds of these three terms individually. Note that the first term corresponds to the event that large components exist.
By Lemma~\ref{lem:cc_size}, we have
\begin{align*}
\expec{\widetilde{G}}{\tilde{\sigma}(P) \mid \tau_1} \prob{}{\tau_1} \leq n\cdot\frac{1}{n^2} = \frac{1}{n}.  
\end{align*}
When the largest component in $\widetilde{G}$ has size at most $L$, the number of infections is at most~$sL$. By Lemma~\ref{lem:seed_collision}, the second term for the seeds collision case is 
\begin{align*}
\expec{\widetilde{G}}{\tilde{\sigma}(P) \mid \bar{\tau}_1,\tau_2} \prob{}{\bar{\tau}_1,\tau_2} \leq sL\cdot \prob{}{\bar{\tau}_1,\tau_2} \leq \frac{2s^3L^2}{n}.
\end{align*}
For the third term, we separate the infections in connected components with cycles from the infections in those without cycles, $\tilde{\sigma}(P) = \rho(P)+y(P)$. By Lemma~\ref{lem:cycle}, we derive
\begin{align*}
\expec{\widetilde{G}}{\tilde{\sigma}(P) \mid \bar{\tau}_1,\bar{\tau}_2} \prob{}{\bar{\tau}_1,\bar{\tau}_2} \leq \expec{\widetilde{G}}{\rho(P) \mid \bar{\tau}_1,\bar{\tau}_2} \prob{}{\bar{\tau}_1,\bar{\tau}_2} + \frac{sL^2d^3}{2n(1-d)}.
\end{align*}
Thus, the expected number of infections $\tilde{\sigma}(P)$ is at most
\begin{align*}
\expec{\widetilde{G}}{\tilde{\sigma}(P)} \leq \expec{\widetilde{G}}{\rho(P) \mid \bar{\tau}_1,\bar{\tau}_2} + \frac{sL^2d^3}{2n(1-d)} + \frac{2s^3L^2}{n} + \frac{1}{n}.
\end{align*}
By Lemma~\ref{lem:cc_size} and Lemma~\ref{lem:seed_collision}, the expected number of infections can be lower bounded by
\begin{align*}
\expec{\widetilde{G}}{\tilde{\sigma}(P)} \geq \expec{}{\tilde{\sigma}(P) \mid \bar{\tau}_1,\bar{\tau}_2} \prob{}{\bar{\tau}_1,\bar{\tau}_2} \geq \left(1-\frac{1}{n^2}\right)\left(1-\frac{2s^2L}{n}\right)\expec{}{\rho(P) \mid \bar{\tau}_1,\bar{\tau}_2}.
\end{align*}
If the number of seeds $s=O(n^{\frac{1}{3}-c})$ for any constant $c>0$, then we have
\begin{align*}
 \expec{\widetilde{G}}{\tilde{\sigma}(P)} \approx_{o(1)} \expec{\widetilde{G}}{\rho(P) \mid \bar{\tau}_1,\bar{\tau}_2}.
\end{align*}

Given the events $\bar{\tau}_1,\bar{\tau}_2$ happen, the connected components considered in $\rho(P)$ are all trees with exactly one seed. For such connected components, it is easy to check that the number of infected vertices $\rho(P)$ is a monotone supermodular function with respect to the deletion edges. Since the momotonicity and supermodularity of $\rho(P)$ holds for all instances conditioned on $\bar{\tau}_1$ and $\bar{\tau}_2$, it also holds for the expectation $\expec{}{\rho(P)\mid \bar{\tau}_1,\bar{\tau}_2}$.
%
%
\end{proof}

\subsection{Expected Number of Infections for a Random Contact Network}
\label{sec:ic_random_contact}

In this section, we assume the contact network $G$ is generated from a random graph $\calG(n,p_1)$, and the contagion network $\widetilde{G}$ is again sampled from $G$ with probability $p$ for each edge. 
Further, the set of seeds is fixed before the contact network is generated. Combining these two random processes, the contagion network $\widetilde{G}$ can be seen as 
being generated from a random graph $\calG(n,p_1p)$. If the probability $p_1p < 1$, we have already shown that $\expec{}{\tilde{\sigma}} \approx_{o(1)} \expec{}{\rho \mid \bar{\tau}_1,\bar{\tau}_2}$.

Suppose that the contact network $G$ is realized and given as the problem input. we prove that the conditional expectation of infections given the contact network $G$ satisfies $\expec{\widetilde{G}}{\tilde{\sigma} \mid G}\approx_{o(1)}\expec{\widetilde{G}}{\rho \mid  G,\bar{\tau}_1,\bar{\tau}_2 }$ with high probability.

\begin{theorem}
\label{thm:rand_sub}
Suppose the contact network $G$ is generated from a random graph model $\calG(n,p_1)$. If the number of seeds $s = O(n^{\frac{1}{3}-c})$ for any constant $c$, then with high probability, the expected number of infections satisfies that for all $P\subseteq Q$
\begin{align*}
\expec{\widetilde{G}}{\tilde{\sigma}(P) \mid G} & \approx_{o(1)}
\expec{\widetilde{G}}{\tilde{\sigma}'(P) \mid G}
\end{align*}
where $\expec{\widetilde{G}}{\tilde{\sigma}'(P) \mid G} \defeq \expec{\widetilde{G}}{\rho(P) \mid  G,\bar{\tau}_1,\bar{\tau}_2}$ is a monotone supermodular function.
\end{theorem}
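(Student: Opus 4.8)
The plan is to re-run the proof of Theorem~\ref{thm:ic_approx} \emph{pointwise in the realized contact graph} $G$, isolate a single additive error that is a function of $G$ alone (not of $P$), and then upgrade ``$o(1)$ in expectation over $G$'' to ``$o(1)$ with high probability over $G$'' by Markov's inequality. I take $\tau_1$ and $\tau_2$ to be the events, on the full contagion network $\widetilde G$ \emph{before} deletions, that $\widetilde G$ has a component of size more than $L$, resp.\ that some component of $\widetilde G$ carries two seeds. Since $\widetilde G-P\subseteq\widetilde G$ for every $P\subseteq Q$, conditioning on $\bar\tau_1\cap\bar\tau_2$ simultaneously bounds every component of every $\widetilde G-P$ by $L$, leaves at most one seed per component of every $\widetilde G-P$, and keeps the family of tree-with-a-seed $\widetilde G$-components reported by $\rho$ fixed; moreover $y(P)=\tilde\sigma(P)-\rho(P)\le y(\emptyset)$, because deleting edges cannot infect a vertex of a cyclic seed-bearing $\widetilde G$-component that was not already infected. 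These uniformities in $P$ are exactly what allow a single high-probability event over $G$ to serve the ``for all $P$'' in the statement.

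First I would condition on $G$ and split $\expec{\widetilde G}{\tilde\sigma(P)\mid G}$ over $\tau_1$, $\bar\tau_1\cap\tau_2$, and $\bar\tau_1\cap\bar\tau_2$, exactly as in the proof of Theorem~\ref{thm:ic_approx}. Bounding $\tilde\sigma(P)\le\tilde\sigma(\emptyset)\le n$ on the first event, $\tilde\sigma(P),\rho(P)\le sL$ on $\bar\tau_1$, and $\tilde\sigma(P)=\rho(P)+y(P)$ with $y(P)\le y(\emptyset)$ on the third --- together with the trivial lower bound $\expec{\widetilde G}{\tilde\sigma(P)\mid G}\ge\prob{}{\bar\tau_1,\bar\tau_2\mid G}\,\expec{\widetilde G}{\rho(P)\mid G,\bar\tau_1,\bar\tau_2}$ and the estimate $\rho(P)\ge s-y(\emptyset)$ on $\bar\tau_1\cap\bar\tau_2$ --- gives, for all $P\subseteq Q$,
\[
\bigl|\,\expec{\widetilde G}{\tilde\sigma(P)\mid G}-\expec{\widetilde G}{\rho(P)\mid G,\bar\tau_1,\bar\tau_2}\,\bigr|\;\le\;\Delta(G),
\]
where $\Delta(G):=n\,\prob{}{\tau_1\mid G}+sL\,\prob{}{\tau_1\cup\tau_2\mid G}+\expec{\widetilde G}{y(\emptyset)\,\mathbf{1}[\bar\tau_1,\bar\tau_2]\mid G}$ does not depend on $P$.

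Next I would take the expectation over $G$: since $\expec{G}{\prob{}{\tau_1\mid G}}=\prob{}{\tau_1}$ and similarly for the other two terms, $\expec{G}{\Delta(G)}$ is bounded, via Lemmas~\ref{lem:cc_size}, \ref{lem:seed_collision} and~\ref{lem:cycle}, by essentially $\tfrac{sL^2d^3}{2n(1-d)}+\tfrac{2s^3L^2}{n}+\tfrac1n$ --- the same quantity controlled in the proof of Theorem~\ref{thm:ic_approx} --- hence $o(1)$ whenever $s=O(n^{\frac13-c})$. Markov's inequality then gives $\Delta(G)\le\sqrt{\expec{G}{\Delta(G)}}=o(1)$ outside an event of probability $\sqrt{\expec{G}{\Delta(G)}}=o(1)$; the same step applied to $\prob{}{\tau_1\cup\tau_2\mid G}$ and to $\expec{\widetilde G}{y(\emptyset)\mathbf{1}[\bar\tau_1,\bar\tau_2]\mid G}$ shows that with high probability over $G$ we also have $\prob{}{\bar\tau_1,\bar\tau_2\mid G}=1-o(1)$ and $\expec{\widetilde G}{y(\emptyset)\mid G,\bar\tau_1,\bar\tau_2}=o(1)$, so that $\expec{\widetilde G}{\rho(P)\mid G,\bar\tau_1,\bar\tau_2}\ge s-o(1)=\Omega(1)$ uniformly in $P$. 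Dividing the displayed additive bound by this $\Omega(1)$ lower bound yields $\expec{\widetilde G}{\tilde\sigma(P)\mid G}\approx_{o(1)}\expec{\widetilde G}{\rho(P)\mid G,\bar\tau_1,\bar\tau_2}=\expec{\widetilde G}{\tilde\sigma'(P)\mid G}$ for all $P\subseteq Q$ on a single event of probability $1-o(1)$.

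Finally, $\expec{\widetilde G}{\tilde\sigma'(P)\mid G}$ is monotone non-increasing and supermodular for the same reason as in Theorem~\ref{thm:ic_approx}: conditioned on $G,\bar\tau_1,\bar\tau_2$, the components relevant to $\rho$ are trees carrying a single seed, on each of which $\rho(P)$ equals the number of vertices still reachable from the seed after deleting $P$ --- a monotone non-increasing, supermodular function of $P$ --- and summing over these components and over the conditional law of $\widetilde G$ preserves both properties. I expect the main obstacle to be not the probabilistic transfer (plain Markov, applied to quantities whose unconditional versions the proof of Theorem~\ref{thm:ic_approx} already certifies as $o(1)$) but enforcing uniformity in $P$: every term of the case-split must be made manifestly independent of $P$, via $\tau_i(P)\subseteq\tau_i$, $y(P)\le y(\emptyset)$, $\rho(P)\ge s-y(\emptyset)$ on $\bar\tau_1\cap\bar\tau_2$, and $\tilde\sigma(P),\rho(P)\le sL$ on $\bar\tau_1$; without it one is driven to a union bound over the $2^{|Q|}$ graphs $G$, which cannot succeed.
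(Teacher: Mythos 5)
Your proposal follows essentially the same route as the paper's proof: isolate a $G$-measurable error term whose expectation over $G$ is controlled by the unconditional bounds already established in the proof of Theorem~\ref{thm:ic_approx} (via Lemmas~\ref{lem:cc_size}, \ref{lem:seed_collision}, \ref{lem:cycle} and the law of total expectation), then apply Markov's inequality to upgrade this to a high-probability statement over $G$, and finally inherit monotone supermodularity of $\expec{\widetilde{G}}{\rho(P)\mid G,\bar{\tau}_1,\bar{\tau}_2}$ pointwise from the tree-component structure on $\bar{\tau}_1\cap\bar{\tau}_2$. The paper works with the single random variable $Z=\expec{\widetilde{G}}{\tilde{\sigma}\mid G}-\expec{\widetilde{G}}{\rho\mid G,\bar{\tau}_1,\bar{\tau}_2}\prob{\widetilde{G}}{\bar{\tau}_1,\bar{\tau}_2\mid G}$ together with a separate Markov bound on $1-\prob{\widetilde{G}}{\bar{\tau}_1,\bar{\tau}_2\mid G}$, whereas you package everything into one $P$-independent $\Delta(G)$; your explicit insistence that the error be uniform in $P$ (so that a single bad event over $G$ suffices for all $2^{|Q|}$ sets) is a point the paper leaves implicit, and is a genuine improvement in rigor. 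One auxiliary claim of yours is wrong, however: $\rho(P)\geq s-y(\emptyset)$ on $\bar{\tau}_1\cap\bar{\tau}_2$ does not hold, because $\tilde{\sigma}$ and hence $\rho$ count only \emph{new} infections (the seeds are subtracted out), so a seed isolated in $\widetilde{G}-P$ contributes nothing to $\rho$; consequently your claimed $\Omega(1)$ lower bound on $\expec{\widetilde{G}}{\rho(P)\mid G,\bar{\tau}_1,\bar{\tau}_2}$, used to convert the additive error into the multiplicative $(1\pm o(1))$ guarantee, is unjustified. The paper does not supply this step either (it passes directly from an additive bound of $\lambda_0$ to $\approx_{\lambda_0}$), so this is a gap you inherited and tried, unsuccessfully, to close rather than one you introduced.
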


Theorem~\ref{thm:rand_sub} shows that $\expec{\widetilde{G}}{\sigma(P) \mid G}$
can be effectively minimized by a greedy algorithm. In this case, Algorithm~\ref{alg:greedy} takes as inputs the function $\expec{\widetilde{G}}{\tilde{\sigma}'(P) \mid G}$, a candidate set $Q$, initial states of nodes, and an integer $k$. It returns a set $P$ such that $\expec{\widetilde{G}}{\tilde{\sigma}(\emptyset) \mid G}-\expec{\widetilde{G}}{\tilde{\sigma}(P)\mid G}\geq
(1-1/e-o(1)-\eps)\left(\expec{\widetilde{G}}{\tilde{\sigma}(\emptyset)\mid G}-\expec{\widetilde{G}}{\tilde{\sigma}(P^*)\mid G}\right)$ holds with high probability, where $P^*$ is the optimum solution of $\expec{\widetilde{G}}{\tilde{\sigma}(P)\mid G}$. Again, the computation of $\expec{\widetilde{G}}{\tilde{\sigma}'(P)\mid G}$ is discussed in Section~\ref{dis:effiency}.

\begin{proof}
In Theorem~\ref{thm:ic_approx}, we show that
\begin{align}\label{eqn:unconditional}
\expec{}{\tilde{\sigma}} - \expec{}{\rho} \leq \expec{}{\tilde{\sigma}} - \expec{}{\rho \mid \bar{\tau}_1,\bar{\tau}_2} \prob{}{\bar{\tau}_1,\bar{\tau}_2} \leq \frac{sL^2d^3}{2n(1-d)} + \frac{2s^3L^2}{n} + \frac{1}{n}.
\end{align}
By the law of total expectation, we know that
\begin{align}\label{eqn:total}
    \expec{}{\tilde{\sigma}}-\expec{}{\rho}
    =& \expec{G}{\expec{\widetilde{G}}{\tilde{\sigma} \mid G}-\expec{\widetilde{G}}{\rho \mid G}} \\
    =& \expec{G}{\expec{\widetilde{G}}{\tilde{\sigma} \mid G}-\expec{\widetilde{G}}{\rho \mid G, \bar{\tau}_1, \bar{\tau}_2}\prob{\widetilde{G}}{\bar{\tau}_1,\bar{\tau}_2 \mid G} } \nonumber\\
    & -\expec{}{\rho \mid \bar{\tau}_1,\tau_2}\prob{}{\tau_2,\bar{\tau}_1}
    -\expec{}{\rho \mid \tau_1}\cdot \prob{}{\tau_1} \nonumber\\
    \geq& \expec{G}{\expec{\widetilde{G}}{\tilde{\sigma} \mid G}-\expec{\widetilde{G}}{\rho \mid G, \bar{\tau}_1, \bar{\tau}_2}\prob{\widetilde{G}}{\bar{\tau}_1,\bar{\tau}_2 \mid G} } -\frac{1}{n}- \frac{2s^3L^2}{n}, \nonumber
\end{align}
where the last inequality is due to Lemma~\ref{lem:cc_size} and Lemma~\ref{lem:seed_collision}.

Let the random variable $Z$ be $\expec{\widetilde{G}}{\tilde{\sigma} \mid G}-\expec{\widetilde{G}}{\rho \mid G, \bar{\tau}_1, \bar{\tau}_2}\prob{\widetilde{G}}{\bar{\tau}_1,\bar{\tau}_2 \mid G}$. By combining two inequalities \eqref{eqn:unconditional} and \eqref{eqn:total}, we have
\begin{align*}
\expec{G}{Z} \leq  \expec{}{\tilde{\sigma}}-\expec{}{\rho} + \frac{1}{n} + \frac{2s^3L^2}{n} \leq \frac{sL^2d^3}{2n(1-d)} + 2\left(\frac{2s^3L^2}{n} + \frac{1}{n}\right).
\end{align*}
Setting $\lambda_0 = s^3/n^{1-c}$ for any constant $c > 0$, by 
Markov's inequality, we have
\begin{align*}
\prob{}{Z\geq \lambda_0} \leq \frac{\expec{G}{Z}}{\lambda_0} \leq \frac{L^2d^3}{2n^c(1-d)s^2} + \frac{6L^2}{n^c},
\end{align*}
which implies, with high probability, 
that
\begin{align*}
0 \leq \expec{\widetilde{G}}{\tilde{\sigma} \mid G}-\expec{\widetilde{G}}{\rho \mid G, \bar{\tau}_1, \bar{\tau}_2}\prob{\widetilde{G}}{\bar{\tau}_1,\bar{\tau}_2 \mid G} \leq \lambda_0.
\end{align*}

To show the approximation result, we need to show that $\prob{\widetilde{G}}{\bar{\tau}_1,\bar{\tau}_2 \mid G}$ is close to $1$ with high probability. By Lemma~\ref{lem:cc_size} and Lemma~\ref{lem:seed_collision}, we have
\begin{align*}
\expec{G}{1-\prob{\widetilde{G}}{\bar{\tau}_1,\bar{\tau}_2 \mid G}} = 1- \prob{}{\bar{\tau}_1,\bar{\tau}_2} = \prob{}{\tau_1} + \prob{}{\tau_2,\bar{\tau}_1} \leq \frac{1}{n^2} + \frac{2s^2L}{n}. 
\end{align*}
By using Markov's inequality, we have
\begin{align*}
\prob{}{1-\prob{\widetilde{G}}{\bar{\tau}_1,\bar{\tau}_2 \mid G} \geq \lambda_0} \leq \frac{\expec{G}{1-\prob{\widetilde{G}}{\bar{\tau}_1,\bar{\tau}_2 \mid G}}}{\lambda_0} \leq \frac{3L}{sn^c},
\end{align*}
which implies that $\prob{\widetilde{G}}{\bar{\tau}_1,\bar{\tau}_2 \mid G} \geq 1-\lambda_0$ with high probability.

Therefore, by using the union bound, we conclude, 
with probability at least $1-O(L^2/n^c)$,
that
\begin{align*}
\expec{\widetilde{G}}{\tilde{\sigma}(P) \mid G} & \approx_{\lambda_0}\expec{\widetilde{G}}{\rho(P) \mid  G,\bar{\tau}_1,\bar{\tau}_2},
\end{align*}
where $\lambda_0 = s^3/n^{1-c}$ for any constant $c>0$. Note that $\expec{\widetilde{G}}{\rho(P) \mid  G,\bar{\tau}_1,\bar{\tau}_2}$ is a supermodular function of the set of removed edges $P$.
\end{proof}

\section{Supermodularity for the IC-SIR model in SBMs}
\label{sec:IC_SBM}

In this section, we consider the IC-SIR model on the network generated from SBMs. We assume there are $\kappa = O(\ln n)$ communities with $n$ nodes. The set of seeds is still fixed before the contact network and the contagion network are generated. 

In Section~\ref{sbm:sec} we investigate the expected number of infections in the IC-SIR model where the contact network is complete and the contagion network $\widetilde{G}$ is sampled from $\SBM(n,\kappa,\QQ)$. In Section~\ref{sbm_sample:sec} we investigate the case where the contact network $G$ is generated from $\SBM(n,\kappa,\QQ)$ and the contagion network $\widetilde{G}$ is again sampled from $G$ with probability $p$ for each edge.

\subsection{Expected Number of Infections in SBMs}
\label{sbm:sec}
Given a complete contact network, we consider the contagion network $\widetilde{G}$ generated from  $\SBM(n,\kappa,\QQ)$, where $\kappa=O(\ln n)$, matrix $\QQ$ is symmetric and $\QQ\1$ is entry-wise less than $\frac{1}{n}\1$. In this case, we prove that the expected number of infections $\tilde{\sigma}(P)$ over the contagion network $\widetilde{G}$ is a $(1\pm o(1))$ approximation of a supermodular function. For simplicity, we define $d_{ij} = nQ_{ij}$ as the expected degree between block $i$ and block $j$. We further let $d_{\text{init}}$ be the maximum intra-block expected degree, defined as $ d_{\text{init}} =  n\cdot \max\left( \diag{\QQ}\right)$, and $d_{\text{end}}$ be the maximum expected degree, defined as $ d_{\text{end}} = n\cdot \max\left( \QQ\1\right)$. 

Similar to the analysis of ER
random graphs, we define $\tau_1$ as the event that the graph includes connected components with size greater than $L^*=9(1-d_{\rm{end}})^{2}\ln(n\kappa)$. Let $\tau_2$ be the event that there is at least one component with more than one seed. Let events $\bar{\tau}_1$ and $\bar{\tau}_2$ be their complements respectively. Since the set of seeds is fixed before the generation of the contagion network, the seeds in each community can be seen as sampled from each community uniformly at random afterwards. 

\begin{theorem}
\label{thm:sbm}
If the number of blocks $\kappa=O(\ln{n})$, the number of seeds $s=O(n^{\frac{1}{3}-c})$, and $d_{\rm{end}}<1-c$ for any absolute constant $c>0$, then for all $P\subseteq Q$,
\begin{align*}
\expec{\widetilde{G}}{\tilde{\sigma}(P)} &\approx_{o(1)} \expec{\widetilde{G}}{\tilde{\sigma}'(P)},
\end{align*}
where $\expec{\widetilde{G}}{\tilde{\sigma}'(P)} \defeq \expec{\widetilde{G}}{\rho(P) \mid \bar{\tau}_1,\bar{\tau}_2}$ is a monotone supermodular function of the edge deletion set $P$. 
\end{theorem}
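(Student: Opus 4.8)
The plan is to run the same argument as the proof of Theorem~\ref{thm:ic_approx}, replacing the Erd\H{o}s--R\'enyi estimates by their SBM counterparts obtained via a branching-process domination. Concretely, I would first establish SBM analogues of Lemmas~\ref{lem:cc_size}, \ref{lem:seed_collision} and~\ref{lem:cycle}, with the component-size threshold $L$ replaced by $L^{*}=9(1-d_{\mathrm{end}})^{2}\ln(n\kappa)$ and the subcriticality parameter $d$ replaced by $d_{\mathrm{end}}$. For the component-size bound, explore the component of a fixed vertex by breadth-first search: whenever a vertex lying in block $i$ is revealed, the number of as-yet-unseen neighbours it contributes is a sum of independent binomials $\sum_{j}\mathrm{Bin}(n,Q_{ij})$ whose mean is at most $(n\QQ\1)_i\le d_{\mathrm{end}}<1$ by hypothesis. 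Hence the exploration is stochastically dominated by a subcritical Galton--Watson process with offspring mean $\le d_{\mathrm{end}}$, whose total progeny has an exponential tail; with the stated constant in $L^{*}$ and a union bound over the $\kappa n$ vertices this gives $\prob{}{\tau_1}\le(\kappa n)^{-2}$. The seed-collision estimate is the same balls-into-bins computation as Lemma~\ref{lem:seed_collision}, now with at most $L^{*}$ vertices per component, yielding $\prob{}{\tau_2\mid\bar{\tau}_1}\le 2s^2L^{*}/n$; and the cycle estimate, following Lemma~\ref{lem:cycle}, bounds the expected number of vertices in components containing a seed and a cycle by a quantity of order $s(L^{*})^2/(n(1-d_{\mathrm{end}}))$, with the local density parameter $d_{\mathrm{init}}$ controlling the probability that a short cycle closes near a seed.

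Next I would combine these exactly as in the proof of Theorem~\ref{thm:ic_approx}. Split $\expec{\widetilde{G}}{\tilde{\sigma}(P)}$ by the law of total expectation into the contributions of $\tau_1$, of $\bar{\tau}_1\cap\tau_2$, and of $\bar{\tau}_1\cap\bar{\tau}_2$. The first is at most $\kappa n\cdot\prob{}{\tau_1}=o(1)$; the second is at most $sL^{*}\cdot\prob{}{\tau_2\mid\bar{\tau}_1}=O(s^3(L^{*})^2/n)$; on $\bar{\tau}_1\cap\bar{\tau}_2$ write $\tilde{\sigma}(P)=\rho(P)+y(P)$ and bound $\expec{}{y(P)\mid\bar{\tau}_1,\bar{\tau}_2}$ by the cycle estimate. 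This gives $\expec{\widetilde{G}}{\tilde{\sigma}(P)}\le\expec{\widetilde{G}}{\rho(P)\mid\bar{\tau}_1,\bar{\tau}_2}+O(s^3(L^{*})^2/n)$, while the trivial bound $\tilde{\sigma}(P)\ge\rho(P)$ together with $\prob{}{\bar{\tau}_1,\bar{\tau}_2}\ge 1-o(1)$ gives the matching lower bound $\expec{\widetilde{G}}{\tilde{\sigma}(P)}\ge(1-o(1))\expec{\widetilde{G}}{\rho(P)\mid\bar{\tau}_1,\bar{\tau}_2}$. Since on $\bar{\tau}_1\cap\bar{\tau}_2$ each of the $s$ seeds lies in a distinct component and infects at least itself, $\expec{\widetilde{G}}{\rho(P)\mid\bar{\tau}_1,\bar{\tau}_2}\ge s$, so all additive errors are $o(s)$ once $s=O(n^{\frac13-c})$, $\kappa=O(\ln n)$ and $L^{*}=O(\ln n)$; hence $\expec{\widetilde{G}}{\tilde{\sigma}(P)}\approx_{o(1)}\expec{\widetilde{G}}{\rho(P)\mid\bar{\tau}_1,\bar{\tau}_2}$ for every $P\subseteq Q$.

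The monotonicity and supermodularity of $\expec{\widetilde{G}}{\tilde{\sigma}'(P)}=\expec{\widetilde{G}}{\rho(P)\mid\bar{\tau}_1,\bar{\tau}_2}$ is inherited from the deterministic case just as in Theorem~\ref{thm:ic_approx}: conditioned on $\bar{\tau}_1\cap\bar{\tau}_2$ every component relevant to $\rho$ is a tree containing exactly one seed, and on a tree the number of vertices still reachable from the seed after deleting an edge set is a monotone non-increasing supermodular set function (deleting an edge detaches an entire subtree, and the marginal loss from deleting a further edge only grows as more edges are removed). Since this holds for every realization in the conditioning event, it passes to the conditional expectation, proving the claim.

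\textbf{The main obstacle.} I expect the component-size tail bound to be the crux: unlike the ER case, one cannot dominate the SBM by a single subcritical ER graph edge by edge, since an individual block pair may have $Q_{ij}$ as large as $1/n$ and thus be near-critical. The argument must instead go through the Galton--Watson domination above, which only uses the uniform row-sum bound $n(\QQ\1)_i\le d_{\mathrm{end}}<1$; one then has to verify that the additional factor $\kappa$ in the union bound over the $\kappa n$ vertices is harmless, which is precisely why $L^{*}$ is defined with $\ln(n\kappa)$ and why the hypothesis $\kappa=O(\ln n)$ is needed. Matching the error bookkeeping of Theorem~\ref{thm:ic_approx} (keeping every error term $o(s)$ while $\kappa,L^{*}=O(\ln n)$) is then routine.
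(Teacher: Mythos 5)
Your overall architecture matches the paper's: SBM analogues of the three supporting lemmas (component size, seed collision, cycle count) fed into the decomposition from Theorem~\ref{thm:ic_approx}, plus the pointwise supermodularity of $\rho$ on tree components. Your component-size bound via Galton--Watson domination using the row-sum condition $n(\QQ\1)_i\le d_{\mathrm{end}}<1$ is exactly the paper's Lemma~\ref{lem:sbm_cc_size}, and the seed-collision estimate carries over verbatim. However, you have misidentified where the difficulty lies. The component-size tail is the easy part; the genuine crux is the cycle count, which you dispose of in one clause by asserting a bound ``of order $s(L^{*})^2/(n(1-d_{\mathrm{end}}))$, following Lemma~\ref{lem:cycle}.'' The ER argument in Lemma~\ref{lem:cycle} bounds the expected number of cycles by $\sum_{z}\binom{n}{z}\frac{(z-1)!}{2}p^{z}\le\frac12\sum_z d^z$, and this computation does \emph{not} transfer to the SBM: an individual entry $Q_{ij}$ may be as large as $\Theta(1/n)$ while the vertex count is $\kappa n$, so the naive term-by-term sum behaves like $\sum_z \kappa^z/(2z)$ and diverges. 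Only the row sums of $\QQ$ are controlled, not the individual entries, and exploiting that requires a real argument. The paper's Lemma~\ref{lem:sbm_cycles} spends an appendix page on this: it generates the $\kappa$ intra-block graphs first, adds the $\kappa(\kappa-1)$ bipartite graphs one at a time, and separately counts cycles internal to a bipartite graph and cycles formed by chaining existing components, with the ``small bins'' parameter $m_{\mathrm{bin}}=4\lceil(1-d_{\mathrm{init}})^2/(1-d_{\mathrm{end}})^2\rceil$ limiting how many components a single cycle can traverse; the resulting bound is $O(s(\ln n)^{4m_{\mathrm{bin}}}/n)$, not $O(s(\ln n)^2/n)$. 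Your parenthetical about ``$d_{\mathrm{init}}$ controlling the probability that a short cycle closes near a seed'' does not correspond to an argument. (A cleaner route does exist --- the matrix $A$ with $A_{uv}=Q_{b(u)b(v)}$ has rank at most $\kappa$ and spectral radius at most $d_{\mathrm{end}}$, so the expected number of length-$z$ cycles is at most $\mathrm{tr}(A^z)/(2z)\le\kappa d_{\mathrm{end}}^{z}/(2z)$, giving $O(\kappa)$ expected cycles in total --- but you did not supply this or any substitute.)

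A secondary point: to upgrade the additive error bounds to the multiplicative $\approx_{o(1)}$ statement you invoke $\expec{\widetilde{G}}{\rho(P)\mid\bar{\tau}_1,\bar{\tau}_2}\ge s$ on the grounds that each seed ``infects at least itself.'' Since $\tilde{\sigma}(P)=\norm{\tilde{\rr}^*-\tilde{\xx}(0)}_1$ counts only \emph{new} infections and $\tilde{\sigma}=\rho+y$, the quantity $\rho$ excludes the seeds, so this lower bound does not hold as stated (a seed in an isolated vertex contributes zero). The paper itself does not establish a lower bound on $\expec{}{\rho\mid\bar{\tau}_1,\bar{\tau}_2}$ and simply treats the error terms as $o(1)$, so this is as much a gap in the source as in your write-up, but the specific justification you offer is not correct.
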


Theorem~\ref{thm:sbm} shows that $\expec{\widetilde{G}}{\tilde{\sigma}(P)}$ is a close approximation to a monotone supermodular function. Therefore, similar to the result in Section~\ref{random.sec}, a greedy algorithm returns a set $P$ which gives a $(1-1/e-o(1)-\eps)$-approximation to the optimum decrease of expected number of infections.

We start by proving that $\tau_1$ happens with a small probability.
\begin{lemma}
\label{lem:sbm_cc_size}
The probability that the contagion network $\widetilde{G}$ has a connected component with size greater than $L^*$ is bounded by
\begin{align*}
    \prob{}{\tau_1} \leq n^{-2}\,.
\end{align*}
\end{lemma}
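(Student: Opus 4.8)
The plan is to mimic the proof of Lemma~\ref{lem:cc_size}, using a branching-process / subcritical-regime union bound, but carefully accounting for the block structure and the fact that expected degrees are only uniformly bounded by $d_{\rm{end}} < 1-c$. First I would fix an arbitrary vertex $v$ (say in block $i$) and bound the probability that its connected component in $\widetilde{G}$ has size at least $L^*$. The standard approach is to dominate the exploration process of the component by a Galton--Watson branching process: starting from $v$, each discovered vertex in block $j$ spawns children according to independent Bernoulli$(Q_{j\ell})$ trials over all $n$ potential neighbors in each block $\ell$. Since $\sum_\ell n Q_{j\ell} = d_j \le d_{\rm{end}} < 1-c$ for every block $j$, this branching process is uniformly subcritical with offspring mean at most $d_{\rm{end}}$ regardless of which block the current vertex lies in. Hence the size of the component is stochastically dominated by the total progeny $T$ of a subcritical branching process whose offspring distribution is (a mixture of) Binomials with mean $\le d_{\rm{end}}$.

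Next I would invoke the standard large-deviation tail bound for the total progeny of a subcritical branching process: for such a process with offspring mean $\mu = d_{\rm{end}} < 1$, one has $\prob{}{T \ge k} \le e^{-I(\mu)\,k}$ for a rate $I(\mu) \ge (1-\mu)^2/2$ (this is exactly the kind of Chernoff-type estimate underlying the choice $L = 9(1-d)^{-2}\ln n$ in Lemma~\ref{lem:cc_size}, and the constant $9$ gives room to spare). Plugging in $k = L^* = 9(1-d_{\rm{end}})^{-2}\ln(n\kappa)$ — note the exponent on $(1-d_{\rm{end}})$ should be $-2$ as in the ER case, matching the statement's intent — gives $\prob{}{|C(v)| \ge L^*} \le e^{-\frac{(1-d_{\rm{end}})^2}{2}\cdot \frac{9\ln(n\kappa)}{(1-d_{\rm{end}})^2}} \le (n\kappa)^{-9/2} \le (n\kappa)^{-3}$. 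Then I take a union bound over all $n$ choices of $v$: $\prob{}{\tau_1} \le n \cdot (n\kappa)^{-3} \le n^{-2}\kappa^{-3} \le n^{-2}$, since $\kappa \ge 1$. (The factor $\kappa$ inside the logarithm is a mild over-provision; even the ER bound $L$ with $\ln n$ would suffice here, but keeping $\ln(n\kappa)$ only helps and keeps notation uniform with later lemmas.)

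The main obstacle is making the stochastic-domination step rigorous in the presence of heterogeneous block sizes and the subtlety that the branching-process mean depends on the block of the currently explored vertex rather than being a single constant. The clean way around this is to observe that \emph{every} block's outgoing expected degree is bounded by the same constant $d_{\rm{end}}$, so one can couple the true exploration with a branching process in which every individual — no matter its type — produces Binomial$(n, d_{\rm{end}}/n)$-many children (or, more simply, a process with offspring mean exactly $d_{\rm{end}}$), dominating the real process vertex by vertex; the multitype structure is then irrelevant for an upper bound on component size. I would state this as a short lemma or inline claim ("the component of any vertex is stochastically dominated by the total progeny of a Galton--Watson process with offspring mean $d_{\rm{end}} < 1$") and cite the standard subcritical branching process tail bound, exactly paralleling the ER argument in Appendix~\ref{apx:icsir_random}. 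Everything else is a routine substitution of $d_{\rm{end}}$ for $d$ and $n\kappa$ for $n$ in the bound already proved for $\calG(n,p)$.
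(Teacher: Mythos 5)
Your proposal is correct and follows essentially the same route as the paper: the paper's proof also reduces to the branching-process/Chernoff argument of the ER lemma, replacing the i.i.d.\ offspring variables by a sum of independent Bernoullis with heterogeneous success probabilities and using $d_{\rm end}$ as a uniform bound on the expected degree. You also rightly flag that the exponent in the paper's definition of $L^*$ should read $(1-d_{\rm end})^{-2}$ rather than $(1-d_{\rm end})^{2}$ for the bound to come out as stated.
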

\begin{proof}
The proof is the same the proof of Lemma~\ref{lem:cc_size} except that the number of new leaves $Z_t$ is the sum of $n\kappa$ independent Bernoulli random variables with various probabilities of success, instead of \emph{i.i.d.} Bernoulli random variables. The result follows from the Chernoff bound by using $d_{\rm{end}}$ to upper bound the expected degree of each node.
\end{proof}

We also obtain the following lemma which is almost the same as Lemma~\ref{lem:seed_collision}.
\begin{lemma}
The probability that there is at least one component with more than one seed conditioned on event $\bar{\tau}_1$ is 
\begin{align*}
    \prob{}{\tau_2 \mid \bar{\tau}_1} \leq \frac{2s^2L^*}{n},
\end{align*}
where $s$ is the number of seeds.
\end{lemma}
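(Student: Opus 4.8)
The plan is to reuse, almost verbatim, the balls-into-bins argument behind Lemma~\ref{lem:seed_collision}, replacing $L$ by $L^*$ and the global uniform seed placement by the within-community uniform placement recorded just above the lemma. First I would condition on the realization of the contagion network $\widetilde{G}$ and on the event $\bar{\tau}_1$, so that every connected component of $\widetilde{G}$ has at most $L^*$ vertices; the components play the role of bins and the seeds play the role of balls. Since the seed set is fixed before $\widetilde{G}$ is generated and the SBM is exchangeable within each community, I would invoke the reformulation recalled before the lemma: conditionally on the number $s_\ell$ of seeds lying in community $V_\ell$, those seeds form a uniformly random subset of $V_\ell$, independently across the $\kappa$ communities, and this placement is independent of $\widetilde{G}$.

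Then, for a fixed pair of seeds $u,v$ I would bound the probability that they fall in a common component. Writing $C$ for the component containing $u$, we have $|C|\le L^*$ on $\bar{\tau}_1$, and $v$ is uniform over the $n$ vertices of its community (over $n-1$ of them if $u,v$ lie in the same community), so $\prob{}{v\in C\mid \widetilde{G},\bar{\tau}_1}\le L^*/n$. A union bound over the at most $\binom{s}{2}\le s^2/2$ pairs of seeds gives $\prob{}{\tau_2\mid \widetilde{G},\bar{\tau}_1}\le s^2L^*/(2n)\le 2s^2L^*/n$. Because this estimate holds uniformly over every $\widetilde{G}$ compatible with $\bar{\tau}_1$, averaging over $\widetilde{G}$ yields $\prob{}{\tau_2\mid\bar{\tau}_1}\le 2s^2L^*/n$, as claimed.

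The only genuine difference from the ER case is that seeds are uniform within communities rather than over the whole vertex set, so one cannot appeal to the vertex-transitivity of $\calG(n,p)$ directly; but each community still has $n$ vertices, so the per-pair collision bound $L^*/n$ is unchanged and nothing new is needed. I therefore do not expect a real obstacle: in the write-up I would simply state that the proof is that of Lemma~\ref{lem:seed_collision} with $L$ replaced by $L^*$ and the uniform seed placement replaced by the within-community placement, keeping the same constants, and note that the only point to verify is that the community sizes do not degrade the $L^*/n$ per-pair estimate, which they do not.
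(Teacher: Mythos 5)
Your proof is correct and follows essentially the same route as the paper, which itself omits this proof and points back to the balls-into-bins argument for Lemma~\ref{lem:seed_collision}; like the paper, you reduce to the ER argument after observing that on $\bar{\tau}_1$ every component has at most $L^*$ vertices and that, with seeds fixed before the graph is drawn, the seeds are uniformly placed within each community. The only (immaterial) difference is that you take a direct union bound over the $\binom{s}{2}$ seed pairs with a per-pair collision probability of at most roughly $L^*/n$, whereas the paper first packs components into at most $\lceil 2n/L^*\rceil$ half-full bins of capacity $L^*$ and unions over pairs and bins; your version even yields the slightly sharper constant $s^2L^*/(2n)$, which of course still implies the stated bound $2s^2L^*/n$.
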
 
\begin{proof}
We omit the proof since the approach is identical to that of Lemma~\ref{lem:seed_collision}.
\end{proof}
The key to the proof of Theorem~\ref{thm:sbm} is an upper bound for the expected number of nodes in connected components with cycles, which is given by the following lemma.

\begin{lemma}
\label{lem:sbm_cycles}
Given that the events $\bar{\tau}_1$ and $\bar{\tau}_2$ happen, the expected number of vertices in connected components which contain at least one seed and one cycle satisfies
\begin{align*}
    \expec{}{y(P) \mid \bar{\tau}_1, \bar{\tau}_2} \prob{}{\bar{\tau}_1,\bar{\tau}_2}= O\left(\frac{s(\ln n)^{4m_{\rm{bin}}}}{n}\right),
\end{align*}
where $m_{\rm{bin}}\defeq 4\left\lceil(1-d_{\rm{init}})^2/(1-d_{\rm{end}})^2\right\rceil$.
\end{lemma}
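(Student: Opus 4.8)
\textbf{Proof proposal for Lemma~\ref{lem:sbm_cycles}.}

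The plan is to mimic the proof strategy of Lemma~\ref{lem:cycle} from the ER case, replacing the single branching parameter $d$ with the block-structured degrees, and tracking how the gap between $d_{\rm{init}}$ and $d_{\rm{end}}$ forces a polylogarithmic (rather than constant) blow-up in the bound. First I would set up the exploration process from a fixed seed in block $i$: reveal the connected component of that seed by breadth-first search, where at the first level each new neighbor in block $j$ appears with probability $Q_{ij}$ and at subsequent levels neighbors of a node in block $j'$ appear in block $j$ with probability $Q_{j'j}$. A component of the seed contains a cycle precisely when, during this exploration, some revealed edge connects to an already-discovered vertex. Conditioned on $\bar\tau_1$, the component has at most $L^*$ vertices, so there are at most $\binom{L^*}{2} = O((\ln n)^2)$ candidate ``back-edges''. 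The probability that any particular back-edge is present is at most $\max_{i,j} Q_{ij} \le d_{\rm{end}}/n$, so the expected number of seeds whose component contains a cycle is at most $s \cdot \binom{L^*}{2} \cdot d_{\rm{end}}/n = O(s(\ln n)^2/n)$, and multiplying by the component-size cap $L^* = O(\ln n)$ gives a crude bound $O(s(\ln n)^3/n)$ on $\expec{}{y(P)\mid\bar\tau_1,\bar\tau_2}$.

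The reason this crude argument must be sharpened to the stated exponent $4m_{\rm{bin}}$ is that $L^*$ is already $\Theta(\ln n)$, so bounding everything by brute force over components of size $L^*$ is wasteful exactly when $d_{\rm{init}}$ is close to $1$ while $d_{\rm{end}}$ is bounded below $1$. The refined approach, following \cite[Theorem 8.9]{BHK20} as adapted for Lemma~\ref{lem:cycle}, is to bound the probability that the exploration \emph{reaches} depth (or size) $t$ before closing a cycle: because the offspring distribution at each block is dominated by a Poisson/binomial with mean at most $d_{\rm{end}} < 1-c$, the component-size distribution has an exponential tail with rate governed by $1-d_{\rm{end}}$, while the probability of creating a cycle once $t$ vertices are exposed is at most $\binom{t}{2} d_{\rm{end}}/n \le t^2/(2n)$. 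Summing $\sum_t t \cdot \prob{}{\text{reach size } t} \cdot t^2/(2n)$ over $t$ up to $L^*$, the sum is dominated by its largest terms, and one must carefully balance the exponential decay from subcriticality against the $t^3$ polynomial factor. The subtlety in the SBM is that different blocks recover at different ``effective subcriticality'' rates: intra-block exploration can be nearly critical (governed by $d_{\rm{init}}$) over short stretches before mixing forces the effective rate down toward $d_{\rm{end}}$. The exponent $m_{\rm{bin}} = 4\lceil (1-d_{\rm{init}})^2/(1-d_{\rm{end}})^2\rceil$ is precisely the number of ``bad'' near-critical stretches one can afford before the accumulated decay kicks in, and each such stretch contributes a factor $(\ln n)^{O(1)}$; chaining at most $m_{\rm{bin}}$ of them yields $(\ln n)^{4m_{\rm{bin}}}$.

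Concretely, the key steps in order are: (i) define the block-aware BFS exploration and the event that a cycle is closed; (ii) couple the number of exposed vertices to a subcritical multitype branching process with maximum mean offspring $d_{\rm{end}}$, and also a finer coupling that allows bursts at rate $d_{\rm{init}}$ lasting at most some bounded number of generations before the effective rate drops, which is where the ratio $(1-d_{\rm{init}})^2/(1-d_{\rm{end}})^2$ and hence $m_{\rm{bin}}$ enters; (iii) bound $\prob{}{\text{component size} \ge t \text{ and no cycle yet}}$ by a product of per-step survival probabilities, yielding a tail of the form $t^{O(m_{\rm{bin}})}\cdot e^{-\Omega(t)}$ up to the truncation at $L^*$; (iv) integrate against the per-vertex cycle-closing probability $\le t^2/(2n)$ and the component-size weight $\le t$ to get $\expec{}{y(P)\mid\bar\tau_1,\bar\tau_2} = O(s(\ln n)^{4m_{\rm{bin}}}/n)$, uniformly over the edge-deletion set $P$ since deletions only shrink components and hence only decrease $y(P)$; (v) multiply by $\prob{}{\bar\tau_1,\bar\tau_2} \le 1$ to obtain the stated form. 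The main obstacle is step (ii)–(iii): getting the polylog exponent to come out as exactly $4m_{\rm{bin}}$ requires carefully identifying, in the multitype setting, over how many consecutive generations the exploration can sustain the near-critical rate $d_{\rm{init}}$ instead of the safe rate $d_{\rm{end}}$, and showing that beyond $m_{\rm{bin}}$ such generations the cumulative contraction $(d_{\rm{end}})^{\Theta(1)}$ per generation overwhelms the $(\ln n)^{O(1)}$ gain — this is the delicate accounting that the ER proof of Lemma~\ref{lem:cycle} does not have to confront, since there $d_{\rm{init}} = d_{\rm{end}} = d$ and $m_{\rm{bin}}$ collapses to a constant.
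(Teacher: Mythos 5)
Your opening paragraph's ``crude'' bound, if made rigorous, already proves the lemma --- and this is where your proposal goes off the rails. Since $m_{\rm{bin}} = 4\lceil(1-d_{\rm{init}})^2/(1-d_{\rm{end}})^2\rceil \geq 4$, the stated bound $O(s(\ln n)^{4m_{\rm{bin}}}/n)$ is \emph{weaker} than your $O(s(\ln n)^3/n)$, not stronger; there is nothing to ``sharpen.'' You have the direction of the inequality backwards, and the two paragraphs you then devote to a refinement are built on that false premise. Worse, the refinement itself is not a proof: you explicitly flag steps (ii)--(iii) --- the coupling that ``allows bursts at rate $d_{\rm{init}}$'' and the claim that $m_{\rm{bin}}$ counts admissible near-critical generations --- as the main obstacle and the ``delicate accounting,'' without supplying either. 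That interpretation of $m_{\rm{bin}}$ also does not match its actual role. In the paper, $m_{\rm{bin}} \leq \lceil L^*/(L_{\rm{init}}/2)\rceil$ is the number of small bins of capacity $L_{\rm{init}}$ (the max intra-block component size) needed to cover one large component of size $L^*$; it is a purely combinatorial device for counting how many intra-block components can be chained by inter-block edges into a single final component, not a statement about how many generations a multitype branching process can run near criticality. The paper's proof reveals the SBM in stages (the $\kappa$ intra-block ER graphs first, then the $\kappa(\kappa-1)$ inter-block bipartite graphs one at a time), bounds by a first-moment count the expected number of cycles created at each stage --- distinguishing cycles within a single bipartite graph from cycles formed by chaining $z$ existing components, with the binning argument controlling the number of such chains --- and then concludes as in the ER case.

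If you want to salvage your proposal, keep only the first paragraph and fix its one real defect: conditioning on $\bar{\tau}_1$ destroys edge independence, so you cannot directly say each of the $\binom{L^*}{2}$ back-edges is present with probability $\leq d_{\rm{end}}/n$ under that conditioning. Instead bound $\expec{}{y(P)\mid\bar{\tau}_1,\bar{\tau}_2}\prob{}{\bar{\tau}_1,\bar{\tau}_2} \leq \expec{}{y(\emptyset)\,\mathbf{1}[\bar{\tau}_1]}$ and control the unconditional event ``the seed's component has at most $L^*$ vertices \emph{and} contains a cycle'' via a BFS exploration truncated at $L^*$ vertices, where each surplus edge is queried once and appears independently with probability at most $\max_{i,j}Q_{ij}\leq d_{\rm{end}}/n$. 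That yields $O(sL^{*3}/n) = O(s(\ln n)^3/n)$, which implies the lemma. This would be a genuinely different --- and simpler --- route than the paper's, but as written your proposal neither completes that route nor the branching-process one it commits to.
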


The proof of Lemma~\ref{lem:sbm_cycles} 
is in Appendix~\ref{apx:IC_SBM}.

\begin{proof}[Proof of Theorem~\ref{thm:sbm}]
Similar to the proof of Theorem~\ref{thm:ic_approx}, we use the bounds given by Lemmas~\ref{lem:sbm_cc_size},~\ref{lem:seed_collision}, and~\ref{lem:sbm_cycles} to prove that $\expec{\widetilde{G}}{\tilde{\sigma}(P)} \approx_{o(1)} \expec{\widetilde{G}}{\rho(P) \mid \bar{\tau}_1,\bar{\tau}_2}$. The monotonicity and supermodularity of $\expec{\widetilde{G}}{\rho(P) \mid \bar{\tau}_1,\bar{\tau}_2}$ again follows from the fact that for all instances of $\widetilde{G}$ where $\bar{\tau}_1$ and $\bar{\tau}_2$ happen, the set function $\rho(P)$ is monotone supermodular.
\end{proof}

\subsection{Expected Number of Infections for a Random Contact Network in SBMs}
 \label{sbm_sample:sec}   
 
In this section, we assume the contact network $G$ is generated from $\SBM(n,\kappa,\QQ)$, and the contagion network $\widetilde{G}$ is again sampled from $G$ with probability $p$ for each edge.  Thus, the contagion network $\widetilde{G}$ can be seen as generated from $\SBM(n,\kappa,p\QQ)$. We assume that the probability matrix $\QQ$ satisfies that $p\QQ\1$ is entry-wise less than $\frac{1}{n}\1$. In this case, the maximum expected degree $d_{\text{end}} = n\cdot \max\left( p\QQ\1\right)$ is less than $1$. 
 
We obtain a result similar to Theorem~\ref{thm:rand_sub} for a given contact network generated from $\SBM(n,\kappa,\QQ)$.

\begin{theorem}
\label{thm:sbm_sub}
Suppose the contact network $G$ is generated from a stochastic block model $\SBM(n,\kappa,\QQ)$. If the number of blocks $\kappa=O(\ln{n})$, the number of seeds $s = O(n^{\frac{1}{3}-c})$, and $d_{\rm{end}}<1-c$ for any absolute constant $c>0$, then with high probability, the expected number of infections satisfies that for all $P\subseteq Q$,
\begin{align*}
\expec{\widetilde{G}}{\tilde{\sigma}(P) \mid G}\approx_{o(1)}\expec{\widetilde{G}}{\tilde{\sigma}'(P) \mid G},
\end{align*}
where $\expec{\widetilde{G}}{\tilde{\sigma}'(P) \mid G} \defeq \expec{\widetilde{G}}{\rho(P) \mid  G,\bar{\tau}_1,\bar{\tau}_2}$ is a monotone supermodular function of the edge deletion set $P$.
\end{theorem}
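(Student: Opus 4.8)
The plan is to transcribe the proof of Theorem~\ref{thm:rand_sub} almost line for line, replacing the \Erdos\Renyi estimates by their stochastic-block-model counterparts. Since the contact network $G$ is drawn from $\SBM(n,\kappa,\QQ)$ and the contagion network keeps each edge of $G$ independently with probability $p$, the unconditional law of $\widetilde{G}$ is $\SBM(n,\kappa,p\QQ)$, so the hypotheses of Theorem~\ref{thm:sbm} hold with $d_{\rm{end}} = n\max(p\QQ\1) < 1-c$. Combining Lemma~\ref{lem:sbm_cc_size}, the stochastic-block-model seed-collision lemma, and Lemma~\ref{lem:sbm_cycles} exactly as in the proof of Theorem~\ref{thm:sbm} gives the unconditional bound
\[
\expec{}{\tilde\sigma} - \expec{}{\rho \mid \bar\tau_1,\bar\tau_2}\prob{}{\bar\tau_1,\bar\tau_2}
= O\!\left(\frac{s(\ln n)^{4m_{\rm{bin}}}}{n}\right) + O\!\left(\frac{s^3 (L^*)^2}{n}\right) + \frac{1}{n},
\]
where, under $\kappa = O(\ln n)$ and $d_{\rm{end}} < 1-c$, one has $L^* = O(\ln n)$ and $m_{\rm{bin}} = 4\lceil(1-d_{\rm{init}})^2/(1-d_{\rm{end}})^2\rceil = O(1)$, so the whole right-hand side is $\mathrm{poly}(s,\ln n)/n$.

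Next I would reproduce the decomposition \eqref{eqn:total} by the law of total expectation over $G$, writing $\expec{}{\tilde\sigma} - \expec{}{\rho} = \expec{G}{\expec{\widetilde{G}}{\tilde\sigma \mid G} - \expec{\widetilde{G}}{\rho \mid G,\bar\tau_1,\bar\tau_2}\prob{\widetilde{G}}{\bar\tau_1,\bar\tau_2 \mid G}}$ up to additive errors $\prob{}{\tau_1}$ and $\prob{}{\tau_2,\bar\tau_1}$, now controlled by Lemma~\ref{lem:sbm_cc_size} and the stochastic-block-model seed-collision lemma. Setting $Z \defeq \expec{\widetilde{G}}{\tilde\sigma \mid G} - \expec{\widetilde{G}}{\rho \mid G,\bar\tau_1,\bar\tau_2}\prob{\widetilde{G}}{\bar\tau_1,\bar\tau_2 \mid G} \ge 0$ and combining with the display above yields $\expec{G}{Z} = \mathrm{poly}(s,\ln n)/n$. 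I would then pick a threshold $\lambda_0 = o(1)$ that still dominates $\expec{G}{Z}$ by a factor $n^{\Omega(1)}$ (for instance $\lambda_0 = (\ln n)^{O(1)} n^{-3c/2}$), so that Markov's inequality gives $Z \le \lambda_0$ with probability $1-o(1)$; since on $\bar\tau_1,\bar\tau_2$ every seed sits in its own tree component we have $\expec{\widetilde{G}}{\rho \mid G,\bar\tau_1,\bar\tau_2} \ge s \ge 1$, hence $\lambda_0/\expec{\widetilde{G}}{\rho \mid G,\bar\tau_1,\bar\tau_2} = o(1)$. A second Markov argument applied to $1 - \prob{\widetilde{G}}{\bar\tau_1,\bar\tau_2 \mid G}$, whose mean is $\prob{}{\tau_1} + \prob{}{\tau_2,\bar\tau_1} = O(s^2\ln n/n)$, shows $\prob{\widetilde{G}}{\bar\tau_1,\bar\tau_2 \mid G} \ge 1-\lambda_0$ with probability $1-o(1)$. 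A union bound over the two events then gives, with high probability and uniformly over $P \subseteq Q$, that $\expec{\widetilde{G}}{\tilde\sigma(P)\mid G} \approx_{o(1)} \expec{\widetilde{G}}{\rho(P)\mid G,\bar\tau_1,\bar\tau_2}$; the uniformity over $P$ is automatic because deleting edges can only shrink components and split off seeds, so $\bar\tau_1\cap\bar\tau_2$ for the full contagion network implies it for $\widetilde{G}-P$ for every $P$, and all the structural bounds above then hold for all $P$ at once.

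It remains to observe that $\expec{\widetilde{G}}{\tilde\sigma'(P)\mid G} = \expec{\widetilde{G}}{\rho(P)\mid G,\bar\tau_1,\bar\tau_2}$ is monotone supermodular, which follows exactly as in the proof of Theorem~\ref{thm:ic_approx}: on $\bar\tau_1,\bar\tau_2$ each component counted by $\rho$ is a tree with a single seed, on which $\rho(P)$ is a monotone supermodular function of the deleted edge set (deleting an edge removes precisely the subtree severed from the seed, and the severed subtrees only shrink as more edges are deleted); since this holds for every realization of $\widetilde{G}$ consistent with $G,\bar\tau_1,\bar\tau_2$, it passes to the conditional expectation.

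I expect the only genuine work to be the polylogarithmic bookkeeping: confirming that $L^* = O(\ln n)$ from $\kappa = O(\ln n)$, that $m_{\rm{bin}}$ is an absolute constant under $d_{\rm{end}} < 1-c$ (so the $(\ln n)^{4m_{\rm{bin}}}$ factor is polylog, using $d_{\rm{init}} \le d_{\rm{end}}$), and that with $s = O(n^{\frac13-c})$ every error term is $\mathrm{poly}(s,\ln n)/n = o(n^{-\Omega(1)})$, so that $\lambda_0$ can indeed be taken $o(1)$ while keeping both Markov applications successful. Beyond that the argument is a line-by-line copy of the proof of Theorem~\ref{thm:rand_sub}.
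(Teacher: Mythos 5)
Your proposal is correct and is exactly what the paper intends: the paper itself omits this proof, stating only that it is similar to that of Theorem~\ref{thm:rand_sub}, and your line-by-line transcription with the SBM substitutes (Lemma~\ref{lem:sbm_cc_size}, the SBM seed-collision lemma, and Lemma~\ref{lem:sbm_cycles} in place of their \Erdos\Renyi{} counterparts, plus the bookkeeping showing $L^*=O(\ln n)$ and $m_{\rm bin}=O(1)$ so every error term is $\mathrm{poly}(s,\ln n)/n$) is the intended argument. Your added remarks on the nonnegativity of $Z$, the lower bound $\rho\geq s\geq 1$ needed to convert the additive error into a relative one, and the uniformity over $P$ make explicit points the paper leaves implicit in the proof of Theorem~\ref{thm:rand_sub}.
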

\begin{proof}
The proof is omitted due to the similarity to the proof of Theorem~\ref{thm:rand_sub}.
\end{proof}

Similar to Section~\ref{sec:ic_random_contact}, we conclude that a greedy algorithm can be used to effectively minimize $\expec{\widetilde{G}}{\tilde{\sigma}(P) \mid G}$ with high probability.

\section{Discussion}
\label{discuss:sec}
In this section, we discuss the efficiency and the robustness of the proposed algorithm for the IC-SIR model and the computational complexity of the proposed optimization problems. In Section~\ref{dis:effiency}, we present an algorithm to efficiently approximate the expected number of infections in the IC-SIR model. In Section~\ref{dis:robustness}, we consider the performance of our algorithm for the case where adversarial nodes are present in the contact network. In Section~\ref{sec:hardness}, we show that Problems~\ref{prob:dm} and~\ref{prob:ic} are $\mathbf{NP}$-hard when the reproduction number is large.


\subsection{Approximating the Expected Number of Infections in the IC-SIR Model}
\label{dis:effiency}
In the IC-SIR model, we need to approximate the expected number of infections $\expec{}{\tilde{\sigma}(P)}$ and its $(1\pm o(1))$-approximation $\expec{}{\tilde{\sigma}'(P)}$ on a contact network. 
We do so by sampling contagion networks, similar to the approach used in \cite{BBCL14,Bha18}.
For a given set of seeds and a contact network, we repeatedly execute the sampling process defined as follows: 

We start by sampling a contagion network and an uniformly sampled terminal node. This process is defined as a success if this terminal node is reachable from the set of seeds. 
Among $R$ rounds of this process, let $R_s$ be the number of total successes. Then, we use $R_s n/R$ as the estimator of the expected number of infections $\expec{}{\tilde{\sigma}}$. 

\begin{proposition}
For $R = O(\epsilon^{-2}n\ln n)$ rounds of the sampling processes, we can approximate the expected number of infections $\expec{}{\tilde{\sigma}(P)}$ for all sets $P$ and its marginal gains upon edge deletion in the IC-SIR model within a factor of $(1\pm\epsilon)$ with high probability. 
\end{proposition}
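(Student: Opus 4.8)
The plan is to regard one round of the sampling process as a single Bernoulli trial whose success probability equals $\expec{\widetilde{G}}{\tilde{\sigma}(P)}/n$ up to a known shift, to couple the $R$ rounds so that one sampled pair simultaneously resolves the trial for every candidate deletion set $P$, and then to combine a multiplicative Chernoff bound with a union bound over the polynomially many sets the greedy procedure ever evaluates. Write $\nu(P)\defeq\expec{\widetilde{G}}{\norm{\tilde{\rr}^*(P)}_1}$; since $\tilde{\rr}^*\ge\tilde{\xx}(0)$ holds deterministically, $\nu(P)=\expec{\widetilde{G}}{\tilde{\sigma}(P)}+s$ (this is what ``the expected number of infections'' means when seeds are counted), and let $\hat{\nu}_R(P)\defeq R_s(P)\,n/R$ be the estimator when the edges removed are $P$, where $R_s(P)$ is the number of successes in the $R$ rounds.

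First I would establish the coupling. A round draws a contagion network $\widetilde{G}$ from the contact network $G$ (each edge retained with probability $p$) together with an independent uniformly random terminal node $v$. Deleting the edges of $P$ from $G$ before sampling has the same law as sampling $\widetilde{G}$ from $G$ and afterwards removing those edges, so for all $P\subseteq Q$ simultaneously the indicator $\mathbb{1}[\,v\text{ is reachable from the seeds in }\widetilde{G}\text{ after the edges of }P\text{ are removed}\,]$ is a deterministic function of the single pair $(\widetilde{G},v)$. A vertex is reachable from the seeds exactly when it is eventually infected, i.e.\ lies in $\tilde{\rr}^*$, so this indicator is $\mathrm{Bernoulli}(\pi(P))$ with $\pi(P)\defeq\nu(P)/n$; hence $R_s(P)\sim\mathrm{Binomial}(R,\pi(P))$ and $\hat{\nu}_R(P)$ is unbiased for $\nu(P)$. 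Since reachability can only shrink when more edges are removed, for $e\notin P$ the quantity $\mathbb{1}[v\text{ reaches the seeds after removing }P]-\mathbb{1}[v\text{ reaches the seeds after removing }P\cup\{e\}]$ is itself $\{0,1\}$-valued, so $\hat{\nu}_R(P)-\hat{\nu}_R(P\cup\{e\})$ is $n/R$ times a binomial variable with mean $\nu(P)-\nu(P\cup\{e\})=\expec{}{\tilde{\sigma}(P)}-\expec{}{\tilde{\sigma}(P\cup\{e\})}$; thus the marginal gains are estimated from the same samples and inherit the same concentration behavior as the values.

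Next I would apply concentration and a union bound. Because the seeds are always reached, $\pi(P)\ge s/n\ge 1/n$, so the multiplicative Chernoff bound gives $\prob{}{\lvert\hat{\nu}_R(P)-\nu(P)\rvert>\eps\,\nu(P)}\le 2\exp(-\eps^2R\pi(P)/3)\le 2\exp(-\eps^2R/(3n))$, with the analogous estimate for each marginal gain; for $R=\Theta(\eps^{-2}n\ln n)$ with a sufficiently large absolute constant this is at most $n^{-C}$ for any prescribed $C$. A union bound over the $O(kq)=\mathrm{poly}(n)$ sets $P$ and pairs $(P,e)$ inspected in the $k$ iterations of Algorithm~\ref{alg:greedy} then makes the probability that any estimate is off by more than a $(1\pm\eps)$ factor be $o(1)$, and the $\ln n$ spent on this union bound is exactly the $\ln n$ appearing in the claimed bound on $R$. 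Finally, a $(1\pm\eps)$-approximation of $\nu(P)$ becomes one of $\expec{}{\tilde{\sigma}(P)}=\nu(P)-s$ after subtracting the known constant $s$, losing only an absolute error $O(\eps s)$, which is negligible when $\expec{}{\tilde{\sigma}(P)}=\Omega(s)$ and otherwise already pins down the small quantity $\expec{}{\tilde{\sigma}(P)}$ to the accuracy the greedy comparisons require; by Theorems~\ref{thm:ic_approx} and~\ref{thm:rand_sub} these estimates also approximate the supermodular surrogate $\expec{}{\tilde{\sigma}'(P)}$ up to a further $(1\pm o(1))$ factor.

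The step I expect to be the main obstacle is making the phrase ``for all sets $P$'' honest: the estimate cannot be guaranteed for all $2^{\lvert Q\rvert}$ subsets through a union bound, so one must commit in advance to a data-independent polynomial-size family — for instance all subsets of size at most $k$ when $\binom{\lvert Q\rvert}{\le k}$ is polynomial, or precisely the collection of sets that the greedy algorithm will query (legitimate because each per-set bound fails with probability $n^{-\omega(1)}$, so one may condition on the favorable event before the algorithm is run) — or, if neither applies, absorb a mild dependence on $k$ into the constant hidden in $R$. A secondary technical point is that marginal gains smaller than $1$ are not controlled in relative error; such gains are estimated to within $O(1)$ and are swallowed by the $\eps$-slack already present in the $(1-1/e-o(1)-\eps)$ guarantee of the greedy algorithm.
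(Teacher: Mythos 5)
Your proposal is correct and follows essentially the same route as the paper: reuse a single collection of (contagion network, terminal node) pairs across all deletion sets $P$, apply a multiplicative Chernoff bound (using the lower bound of order $1/n$ on the per-round success probability to see why $R=O(\eps^{-2}n\ln n)$ suffices), and union-bound over the polynomially many sets and marginal gains that the greedy algorithm actually queries. Your explicit handling of the seed offset $s$ and of marginal gains too small to control in relative error supplies detail the paper leaves implicit, but does not change the argument.
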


\begin{proof}
For $R = O(\eps^{-2} n \ln n)$ rounds, by the Chernoff bound, the estimator $R_s n/R$ is a $(1\pm\eps)$-approximation to the expected number of infections with high probability. Then we consider updating the expected number of infections upon deleting a set $P$ of edges from the contact network. Since all terminals and contagion networks are mutually independent, we can use the same pairs of terminal node and contagion network with different $P$. We check the reachability of a node in the corresponding contagion network without using edges in $P$. For a greedy algorithm it suffices to know whether the current edge $e\in Q\backslash P$ is a bridge that separates the terminal and the seeds in the contagion graph to compute the marginal gains $\expec{}{\tilde{\sigma}(P)}-\expec{}{\tilde{\sigma}(P\cup\{e\})}$. Then we use the number of successes without using edges in $P$ and $P\cup\{e\}$ to estimate $\expec{}{\tilde{\sigma}(P)}$ and the marginal gains for all remaining candidate edges. Due to the Chernoff bound, the resulted expectations and the marginal gains are well approximated with high probability. The overall high probability guarantee follows by a union bound. 
\end{proof}

\begin{remark}
We can approximate the function $\expec{}{\tilde{\sigma}'(P)} = \expec{}{\rho(P) \mid \bar{\tau}_1, \bar{\tau}_2}$ for all sets $P$ and its marginal gains by modifying the sampling process as follows. If the sampled contagion network has large connected components or seeds collision corresponding to events $\tau_1,\tau_2$ respectively, then we resample the contagion network. If the terminal node is contained in a connected component with cycles, then this process is seen as a failure. 
\end{remark}


\subsection{Robustness of the Algorithm for the IC-SIR Model}
\label{dis:robustness}
We discuss the robustness of the algorithm in the IC-SIR model, in the presence of $O(\ln{n})$ oblivious adversarial nodes,
which add incident edges to the graph without information about the seeds and the graph except for its direct neighbors.
To this end, we consider a contact network generated from an ER graph or a SBM with adversarial perturbation. After the generation of the contact network, the adversarial nodes randomly augments their connections to other nodes. In the following, we give an example of such an adversarial model.

\begin{example}
Consider a contact network constructed as follows: let $G_0=(V,E_0)$ be a random graph generated from $\calG(n, p_1)$ where $p_1 = (\ln n) / n$. Then, we choose $O(\ln n)$ adversarial nodes and add $\nu$ non-existing edges to each of them uniformly at random. Let $E_1$ be the set of edges added. Thus, the contact network is $G=(V,E_0\cup E_1)$. The contagion network $\widetilde{G}$ is sampled from $G$ with probability $p < 1/\ln n$.
Then, we consider the following two cases.

If the number of edges added to each adversarial node $\nu \geq 4\ln n$, then by the Chernoff bound, the degree of a node in a random graph is at most $4\ln n$ with probability at least $1-(1/n)^{4/5}$. Thus, we can identify these adversarial nodes easily in this case by checking the degree of each node in the contact network. In this case we need to reduce the degree of these nodes to at most $4\ln n$, which will reduce the network to the following situation.

If the number of edges added to each adversarial node $\nu < 4\ln n$, then these adversarial nodes could potentially connect several connected components in $\widetilde{G}$. We note that the degree for any adversarial nodes after the edge addition is at most $8\ln n$ with high probability. Given that $\widetilde{G}-(V,E_1)$ has only connected components of sizes $O(\ln{n})$, these adversarial nodes can add a connected component with size at most $O(\ln^3{n})$. Since the number of seeds is $n^{\frac{1}{3}-c}$ for any $c>0$, the probability that the aforementioned connected component has at least one seed is $O(n^{-\frac{2}{3}-c}\cdot \ln^3{n})$. 
Therefore, in the worst case, it will induce an $O(n^{-\frac{2}{3}-c}\cdot \ln^6{n}) = o(1)$ expected difference between $\tilde{\sigma}$ and $\rho$.
\end{example}



\subsection{Hardness of the Optimization Problems}
\label{sec:hardness}
We prove that Problems~\ref{prob:dm} and~\ref{prob:ic} are $\mathbf{NP}$-hard in a regime where the reproduction number is greater than one. We note that the hardness of these problems with reproduction number less than or equal to one remains open.

For the D-SIR model, we consider the
case where 
\begin{align}
\label{inst:DMhard}
D_{i}=0, \forall i\in V;\quad  B_{ij}>0, \forall (j,i)\in E.
\end{align}
For the IC-SIR model, we consider the case where
\begin{align}
\label{inst:IChard}
    B_{ij}=1, \forall (j,i)\in E,
\end{align} such that the contagion network is equivalent to the contact network. With these parameters, both Problems~\ref{prob:dm} and~\ref{prob:ic} become equivalent to the graph partitioning problem formulated in~\cite{enns2012optimal}.

\begin{theorem}
\label{thm:hard}
Given an instance of Problem~\ref{prob:dm} satisfying condition~(\ref{inst:DMhard})  or Problem~\ref{prob:ic} satisfying condition~(\ref{inst:IChard}), there does not exist a polynomial time algorithm which finds an optimal solution $P^*$ unless $\mathbf{P}=\mathbf{NP}$.
\end{theorem}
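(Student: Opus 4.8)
\emph{Overview.} The plan is to show that under the two degenerate parameter regimes~(\ref{inst:DMhard}) and~(\ref{inst:IChard}) the infection-count objectives $\sigma(P)$ and $\expec{}{\tilde\sigma(P)}$ collapse to a purely combinatorial \emph{reachability} functional of the residual graph $G-P$, and that minimizing this functional by deleting at most $k$ edges from $Q$ is exactly the edge-removal graph partitioning problem of~\cite{enns2012optimal}, which is $\mathbf{NP}$-hard. Hardness then transfers to Problems~\ref{prob:dm} and~\ref{prob:ic}.

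\emph{The D-SIR reformulation.} Under~(\ref{inst:DMhard}) we have $D_i=0$ for all $i$, so $\rr(t)\equiv\rr(0)$, which we fix to $\0$; then $\mm(t)=\xx(t)$ and~(\ref{eq:pdiscrete}) becomes $x_i(t+1)=x_i(t)+(1-x_i(t))\sum_j B_{ij}x_j(t)$. Choosing the positive rates $B_{ij}$ small enough that $\sum_j B_{ij}<1$, the state stays in $[0,1]^n$, is coordinate-wise non-decreasing, and hence converges to some $\xx^*$ with $(1-x_i^*)\sum_j B_{ij}x_j^*=0$ for every $i$; since $B_{ij}>0$ on edges, each coordinate of $\xx^*$ is either $1$ or has all its in-neighbours at value $0$. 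Taking $\xx(0)$ to be the indicator of the seed set, an induction on distance from the seeds gives $x_i^*=1$ for every vertex reachable from a seed, while an induction on $t$ gives $x_u(t)\equiv0$ for every unreachable vertex; thus $\sigma(P)=\norm{\mm^*-\mm(0)}_1$ equals the number of non-seed vertices reachable from the seed set in $G-P$. \emph{The IC-SIR reformulation.} Under~(\ref{inst:IChard}) the matrix $\widetilde{\BB}$ is the deterministic adjacency matrix of $G$, so the contagion network equals the contact network; one step of the saturation recursion infects every neighbour of a currently infected vertex, and recovered vertices are never reinfected, so $\tilde r_i^*=1$ precisely for the vertices reachable from the seeds, whence $\expec{}{\tilde\sigma(P)}=\tilde\sigma(P)$ equals that same reachability count (for a single seed, the size of its connected component minus one).

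\emph{The reduction.} With these reformulations, both special cases ask: given $G$, a seed set, a candidate set $Q\subseteq E$ (we may take $Q=E$) and a budget $k$, choose $P\subseteq Q$ with $|P|\le k$ minimizing the number of vertices reachable from the seeds in $G-P$. This functional is monotone non-increasing in $P$, so the constraint $|P|=k$ in~(\ref{obj:D_SIR}) and~(\ref{obj:IC-SIR}) yields the same optimum, and it coincides with the partitioning objective of~\cite{enns2012optimal}, whose decision version (``can $k$ deletions bring the reachable set down to size at most $\ell$?'') is $\mathbf{NP}$-hard and obviously in $\mathbf{NP}$. Consequently, a polynomial-time algorithm returning an exact optimum for Problem~\ref{prob:dm} under~(\ref{inst:DMhard}) --- or for Problem~\ref{prob:ic} under~(\ref{inst:IChard}) --- would decide this problem in polynomial time, forcing $\mathbf{P}=\mathbf{NP}$.

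\emph{Main obstacle.} The only genuinely analytic point is the D-SIR step: one must rule out that the monotone continuous dynamics stalls at a nontrivial equilibrium that infects a reachable component only partially. This is exactly what the fixed-point identity $(1-x_i^*)\sum_j B_{ij}x_j^*=0$ together with the two inductions above resolves; the IC-SIR step, the choice $Q=E$, and the budget bookkeeping are routine, and the reduction is then immediate given the hardness of the partitioning problem of~\cite{enns2012optimal}. If a self-contained hardness proof were preferred, one would instead reduce to the reachability formulation from a classical problem such as \textsc{Densest-$k$-Subgraph} or \textsc{Clique}; designing a gadget whose optimal edge deletions correspond exactly to the sought combinatorial object is then the delicate part.
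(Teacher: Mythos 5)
Your reduction of the two dynamical models to a pure reachability count is correct and in fact more carefully justified than in the paper: the paper simply asserts that under conditions~(\ref{inst:DMhard}) and~(\ref{inst:IChard}) both problems ``become equivalent to the graph partitioning problem,'' whereas you actually verify the fixed-point behaviour of the D-SIR dynamics and the determinism of the IC-SIR cascade. The handling of $Q=E$ and of the $|P|=k$ versus $|P|\le k$ distinction via monotonicity is also fine.

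The genuine gap is that you never establish the $\mathbf{NP}$-hardness of the resulting reachability-minimization problem; you outsource it to~\cite{enns2012optimal}, asserting that its decision version ``is $\mathbf{NP}$-hard.'' The paper does not treat that as a citable fact --- it describes~\cite{enns2012optimal} only as having \emph{formulated} a related partitioning problem, and it supplies its own hardness proof precisely because no off-the-shelf result covers the seeded, budgeted, candidate-set version needed here. That proof is the actual content of the theorem: a reduction from minimum bisection of $3$-regular graphs (Lemma~\ref{lemma:MinBisec}) in which one attaches three $(n+1)$-node stars centred at the seeds $v_{(1)},v_{(2)},v_{(3)}$ to the $3$-regular graph $\widehat{G}$, sets the budget to $k=b+\tfrac{3n}{2}$ and the threshold to $z=3+\tfrac{n}{2}$, and argues completeness directly and soundness by repeatedly moving a vertex across the cut using $3$-regularity to show that any feasible deletion set yields a bisection of size at most $b$. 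You explicitly flag this gadget design as ``the delicate part'' and leave it undone, so the combinatorial core of the theorem is missing from your proposal; as written, the argument reduces a hard-looking problem to another problem whose hardness has not been demonstrated.
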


In particular, we study the following decision problem.
\begin{problem}
\label{prob:decision}
Given a graph $G=(V,E)$ with $n$ nodes, a  vector $\ss\in\{0,1\}^n$ which defines the set of seeds $\calS=\mathrm{supp}(\ss)$, a candidate edge deletion 
set $Q\subseteq E$ with $|Q|=q$, two integers $0<k\leq q$ and $0\leq z\leq n$, decide whether or not there exists an edge set $P$, with $|P|=k$, such that after removing edges in $P$, the number of nodes that are reachable from the seeds is less or equal to $z$.
\end{problem}

The hardness of Problem~\ref{prob:decision} follows from the $\mathbf{NP}$-completeness of \emph{minimum bisection of $3$-regular graphs}.
\begin{lemma}[Theorem 2.12 in~\cite{Bui86}]
\label{lemma:MinBisec}
The problem of deciding whether or not a $d$-regular graph has a bisection of size $b$ or less is $\mathbf{NP}$-complete, whenever $d\geq 3$ and $b=n^{\eps}$ for any fixed $\eps\in(0,1)$.
\end{lemma}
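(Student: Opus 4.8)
The plan is to prove the lemma by a polynomial-time reduction from a known $\mathbf{NP}$-complete problem, followed by an expander-padding step that drives the target cut size down to exactly $n^{\eps}$. Membership in $\mathbf{NP}$ is immediate: a balanced partition $(V_1,V_2)$ with $|V_1|=|V_2|$ is a polynomial-size certificate whose cut can be counted in polynomial time, so the entire task reduces to establishing $\mathbf{NP}$-hardness.

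First I would establish $\mathbf{NP}$-hardness of bisection on $d$-regular graphs for a target value $\beta$ that grows \emph{linearly} in the number of vertices $m$ of the constructed instance. The route is a gadget reduction from a standard $\mathbf{NP}$-complete problem (for instance, a restricted max-cut or balanced-partition problem, or $3$-SAT with variable and clause gadgets) producing a bounded-degree graph in which balanced bisections of small cut are in correspondence with solutions of the source instance. The degrees are then raised to exactly $d$ by attaching small rigid $d$-regular \emph{degree-equalizing gadgets} to the under-degree vertices. These gadgets must be designed so that (i) every vertex ends with degree exactly $d$, (ii) an optimal bisection can keep each gadget entirely on one side, and (iii) the bisection optimum shifts by a fixed, efficiently computable amount. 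Verifying through a short exchange/case analysis that a gadget never offers a cheaper cut when split across the partition is the first technical hurdle.

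Next I would amplify to the target $b=n^{\eps}$ by \emph{inert expander padding}. Given a base $d$-regular instance $G$ on $m$ vertices with linear target $\beta=\Theta(m)$, I would set $n=\beta^{1/\eps}$, so that $n^{\eps}=\beta$, $n\ge m$ for fixed $\eps\in(0,1)$, and $n=\Theta(m^{1/\eps})$ is polynomial in $m$ (keeping the reduction polynomial time). I then form $H$ by taking $G$ together with two disjoint isomorphic $d$-regular expanders $B_1,B_2$, each on $(n-m)/2$ vertices, with no edges between the ballast and $G$; since the statement does not require connectivity, this is permitted, and $H$ is $d$-regular. In any bisection of $H$ with cut at most $b=n^{\eps}$, the linear edge-expansion of the ballast forces each $B_i$ to lie entirely on one side, because any split of an expander on $\Theta(n)$ vertices into two nonempty parts costs $\Omega(n)\gg n^{\eps}$ crossing edges. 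Hence the only bisections of $H$ with cut $\le b$ place $B_1$ and $B_2$ on opposite sides and split the $m$ vertices of $G$ into equal halves, and for these the cut of $H$ equals the cut of $G$. Therefore $H$ has a bisection of size at most $b=n^{\eps}$ if and only if $G$ has a balanced bisection of size at most $\beta$. Carrying out both the regularization gadget and the ballast with $d$-regular expanders, which exist for every $d\ge 3$, extends the result to all fixed $d\ge 3$.

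The step I expect to be the main obstacle is establishing the inertness of the padding, namely proving rigorously that no bisection cheaper than $n^{\eps}$ can split a ballast component. This hinges on a lower bound of the form ``every $d$-regular expander on $N$ vertices has bisection width $\Omega(N)$,'' applied with $N=\Theta(n)$, which I would derive from the spectral gap (Cheeger-type inequality) or direct isoperimetric properties of the chosen explicit expander family. A secondary bookkeeping task is to meet all size and parity constraints ($n$ even, $(n-m)/2$ a positive integer for which a $d$-regular graph exists, i.e.\ $d\cdot(n-m)/2$ even), which can be arranged by adjoining a few additional small inert $d$-regular components without affecting the correspondence above.
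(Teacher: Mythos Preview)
The paper does not prove this lemma at all: it is quoted verbatim as Theorem~2.12 of \cite{Bui86} and used as a black box in the reduction that establishes Theorem~\ref{thm:hard}. There is therefore no ``paper's own proof'' to compare against; your proposal is an independent attempt to reprove a cited external result.

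As for the proposal itself, the expander-padding step (your second phase) is sound and is indeed close in spirit to how Bui et al.\ force the bisection width down to $n^{\eps}$: inert $d$-regular pieces with linear isoperimetry make any split of the ballast cost $\Omega(n)\gg n^{\eps}$, so cheap bisections must respect the ballast and induce a bisection of the core. The real content, however, lies in your first phase, and there the proposal is only a placeholder. Producing a $d$-regular instance whose minimum bisection distinguishes yes/no cases of an $\mathbf{NP}$-hard source problem requires a concrete gadget construction together with a proof that optimal bisections never gain by splitting a gadget; you acknowledge this as ``the first technical hurdle'' but do not carry it out. Without that construction the reduction is not established, so as written the proposal has a genuine gap at the step that does all the work. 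The parity/size bookkeeping and the need for explicit $d$-regular expanders of prescribed (possibly awkward) sizes are secondary issues you correctly flag, but they are easy compared to the missing gadget argument.
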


\begin{proof}[Proof of Theorem~\ref{thm:hard}]
We construct an instance as follows. For a $3$-regular graph $\widehat{G}=(V,E)$ with $n$ nodes where $n$ is a even number, we add $3$ copies of a star graph $G^*$ to it, denoted $G^*_{(1)}=(V_1,E_1)$, $G^*_{(2)}=(V_2,E_2)$, and $G^*_{(3)}=(V_3,E_3)$. Each one of the $(n+1)$-node star graphs has $n$ leaves supported on $V$. The central nodes of the $3$ stars are denoted as $v_{(1)}$, $v_{(2)}$, and $v_{(3)}$, respectively.

Then we construct the graph 
$G'= \widehat{G}+G^*_{(1)}+G^*_{(2)}+G^*_{(3)}$. We consider an instance of Problem~\ref{prob:decision} with the input $G=G'$, $\calS = \{v_{(1)}, v_{(2)}, v_{(3)}\}$, $Q=E\cup E_1\cup E_2\cup E_3$, $k=b+\frac{3n}{2}$, and $z=3+\frac{n}{2}$.

Completeness: 
If there is a bisection of $\widehat{G}$ with size $b$, then by definition there is a partition of nodes $S$ and $\bar{S}\defeq V\backslash S$, such that the number of edges in between is $b=|C(S, \bar{S})|$, and $|S|=|\bar{S}|=n/2$. By cutting $b$ edges between $S$ and $\bar{S}$, and all the edges $(u, v_{(i)})$, for all $u\in S$ and $i\in\{1,2,3\}$, we separate $n/2$ nodes from the rest of the graph. Therefore the number of nodes reachable from the seeds is equal to $3+\frac{n}{2}$.

Soundness: It suffices to prove that if there is no bisection of size $b$ or less for $\widehat{G}$, then there is no such set $P$, with
$|P|=b+\frac{3n}{2}$, such that by removing edges in $P$, $\frac{n}{2}$ nodes are no longer reachable from the seeds. We prove it by contradiction. We assume the bisection of size $b$ for $\widehat{G}$ does not exist. If there exists such a 
set $P$ for the constructed instance of Problem~\ref{prob:decision}, then $P$ contains a cut $C(S,\bar{S})$ of graph $G'$ with at most $b+\frac{3n}{2}$ edges, with 
$|S|\geq n/2$ and $S \cap \calS=\emptyset$. If $|S|=n/2$ then $P\cap E$ contains a bisection of $\widehat{G}$ of size less than or equal to $b$. Therefore we have constructed a contradiction. If $|S|>n/2$, one can always find an vertex $u\in S$, such that $|C(S\backslash\{u\}, \bar{S}\cup \{u\})|\leq b+\frac{3n}{2}$, since $u$ has at most $3$ neighbors in $S$ and at least $3$ neighbors in $\bar{S}$. By repeating this process we arrive at a new cut $C'\defeq C(S', \bar{S'})$ of $G'$ such that $|C(S', \bar{S'})|\leq b+\frac{3n}{2}$, $|S|=n/2$, and $S \cap \calS=\emptyset$. Then $C' \cap E$ is a bisection of $\widehat{G}$ with size $b$ or less. Again, we obtain a contradiction, concluding
the proof.
\end{proof}

\section{Numerical Results}
\label{sec:numerical}

In this section we run experiments to show the effectiveness of the proposed algorithms in all considered models. We first study the effectiveness of our greedy algorithms in the D-SIR and IC-SIR model by comparing it to two baseline algorithms, Random~\cite{callaway2000network} and Max-Degree~\cite{albert2000error}. 
\begin{itemize}
    \item Greedy: choose an edge greedily at each round with respect to the objective function, as shown in Algorithm~\ref{alg:greedy}.
    \item Max-Degree: remove an edge incident to the node with the maximum degree at each round. 
    \item Random: remove edges chosen uniformly at random
    from the network. 
\end{itemize}


\subsection{D-SIR Simulations}\label{subsec:dsir_sim}

\begin{figure}
\centering
\includegraphics[width=0.4\textwidth]{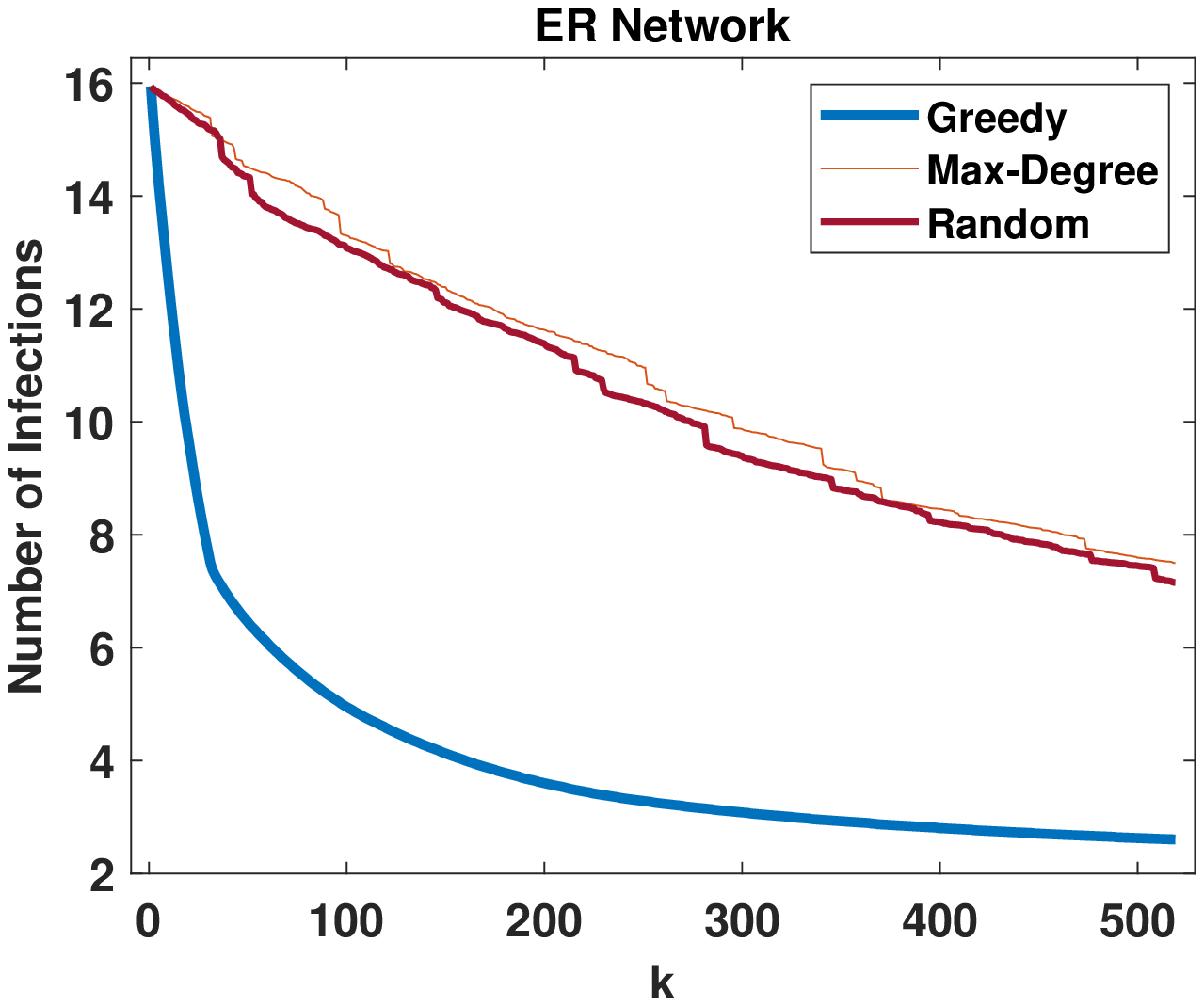}\quad
\includegraphics[width=0.4\textwidth]{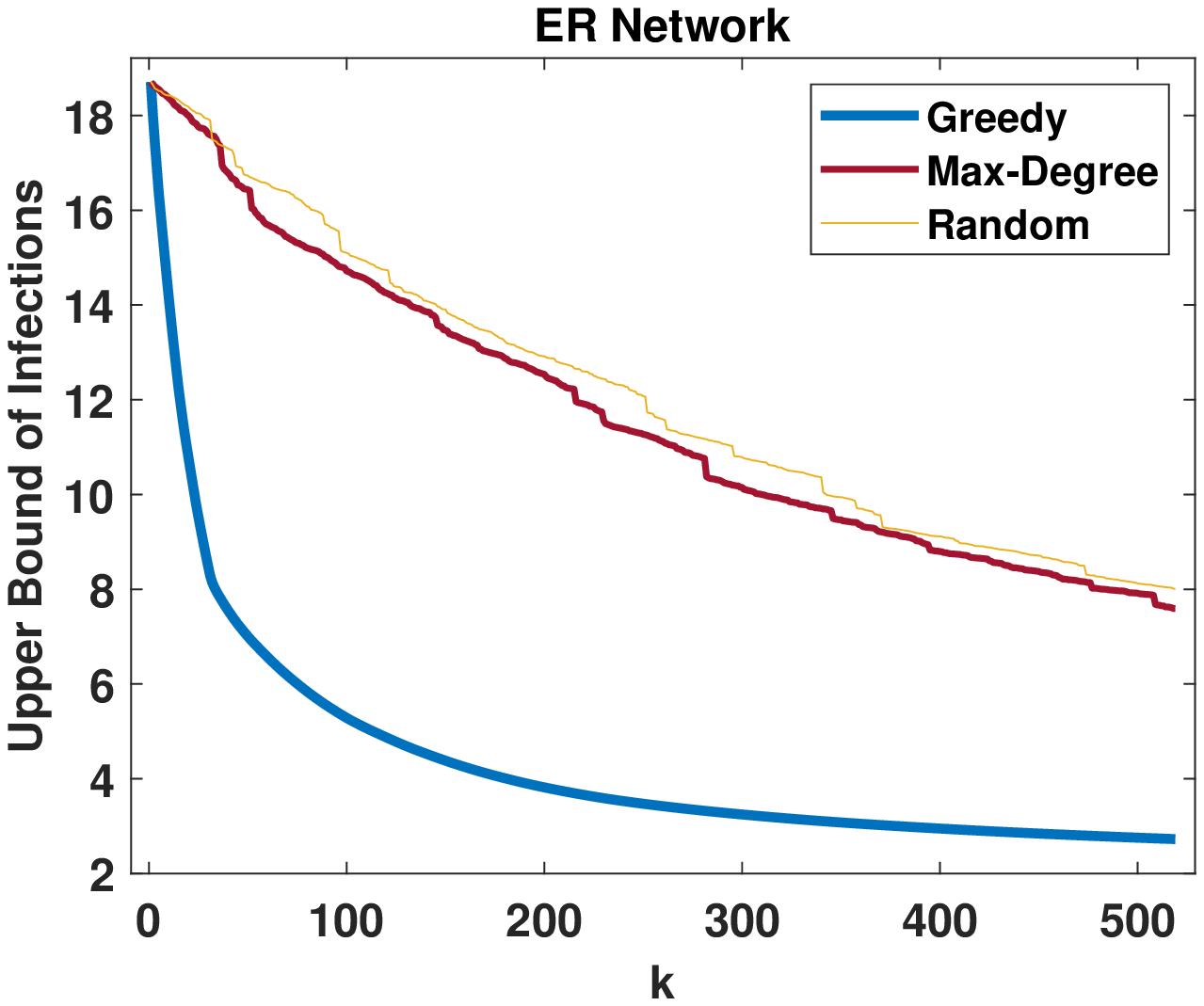}
\includegraphics[width=0.4\textwidth]{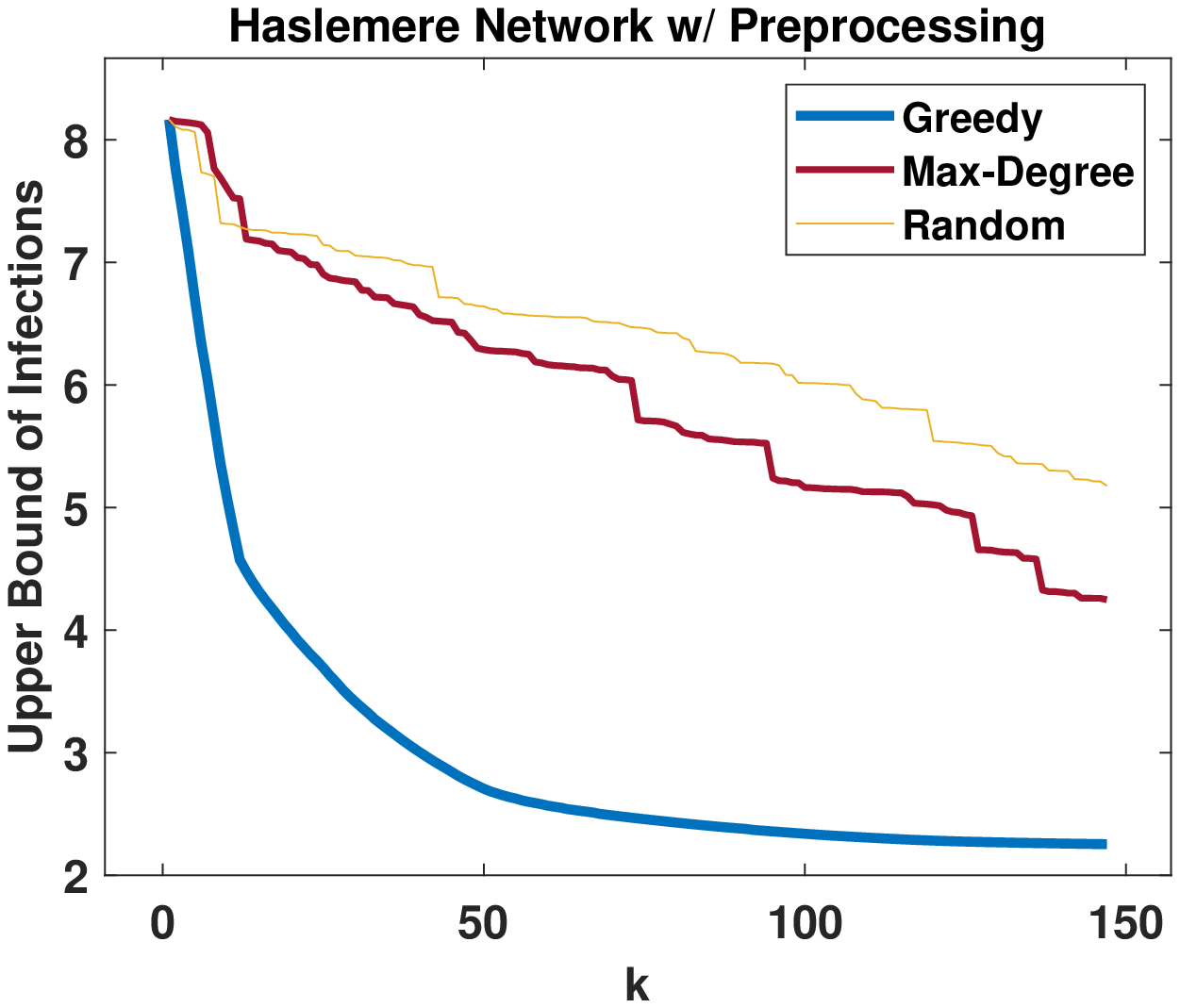}\quad
\includegraphics[width=0.4\textwidth]{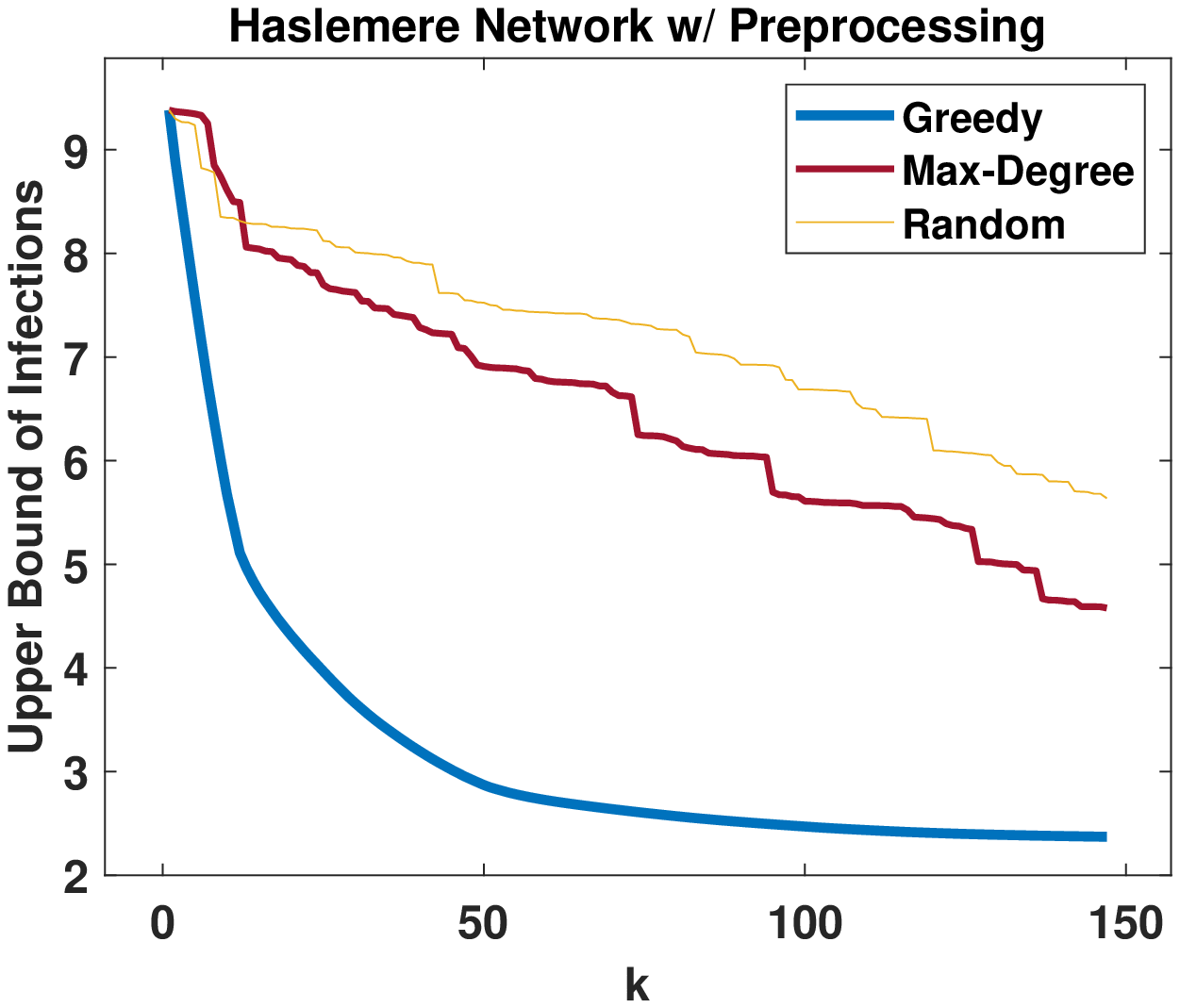}
\caption{In the D-SIR model, the number of infections $\sigma$ and the proposed upper bound of infections $\hat{\sigma}$ for the edge deletion sets given by Greedy, Max-Degree, and Random algorithms on the ER network and the preprocessed Haslemere contact network. The title of the horizontal axis $k$ is the number of removed edges. }
\label{fig:D-SIR}
\end{figure}

For the D-SIR model, we run experiments on an ER random graph with $500$ nodes and the connection probability $0.0249$. The edges in the network are always bidirectional, with 
possibly different
infection rates for the two directions. For a generated graph $G(V,E)$ with $3142$ edges, we uniformly at random
pick the recovery rate $D_{i}\in[0.28,0.35]$ for each $i$ and the infection rate $B_{ij}\in[0.011,0.034]$ for each $i,j$ pair. Then, we 
uniformly at random
sample $5$ nodes from the network as the set of seeds $\mathcal{S}$, whose initial infection probabilities, $x_u(0)$ for all $u\in \mathcal{S}$, are 
uniformly at random
chosen from $[0.8,0.9]$. For any other node $u\notin \mathcal{S}$, its infection probability is $x_u(0)=0$.  Each node $v\in V$ has an initial removed probability $r_v(0)$ uniformly at random
chosen from $[0,0.05]$. The candidate set $Q$ has 
$1571$ edges which are 
randomly and uniformly
chosen from the edge set $E$. 
Further, we run three algorithms to choose the edge deletion set $P \subseteq Q$ satisfying $|P|\leq k$ for $k=523$. We show the results of the three algorithms for both the number of infections $\sigma$ and the proposed upper bound of the number of infections $\hat{\sigma}$ in Figure~\ref{fig:D-SIR}.



Next we simulate the dynamics over a real contact network collected in Haslemere, England using mobile technologies
~\cite{klepac2018contagion,firth2020using}.
In this example, the contact network has $469$ nodes and $1262$ edges, with the maximum degree equal to $37$. For this contact network, the condition for the exponential stable in Theorem~\ref{sufficient:theorem} can only be guaranteed with small infection rates and large recovery rates. To address a more realistic setting, we preprocess the contact network by greedily removing some edges from the high degree nodes as 
the Max-Degree algorithm does. 
This constraint on the max degree of nodes can be implemented in reality by posing restrictions on gathering~\cite{Sweden_order} or shelter-in-place orders \cite{California_order}. The preprocessed network has $877$ edges, with the maximum degree equal to $8$. 
Then we set $D_i=0.5$ for each $i$ and $B_{ij}\in[0.056,0.063]$ for each $i,j$ pair. We choose the candidate set $Q$ with $518$ edges 
uniformly at random
from the edge set. The results of three algorithms with $k=219$ on the preprocessed network are shown in Figure~\ref{fig:D-SIR}. 

The results clearly show that our algorithm outperforms 
the two heuristic algorithms in minimizing the number of infections on both the ER network and the Haslemere contact network. We can also observe that the results for the upper bound $\hat{\sigma}$ and the expected number of infections $\sigma$ behave consistently in these examples.



\subsection{IC-SIR Simulations}

For the IC-SIR model, we implement three algorithms on three different networks, the ER network, the SBM network, and the Haslemere network with preprocessing. The approximations of the expected number of infections after deleting $k$ edges returned by three algorithms are shown in Figure~\ref{fig:IC_SIR}.

\begin{figure}
\centering
\includegraphics[width=0.4\textwidth]{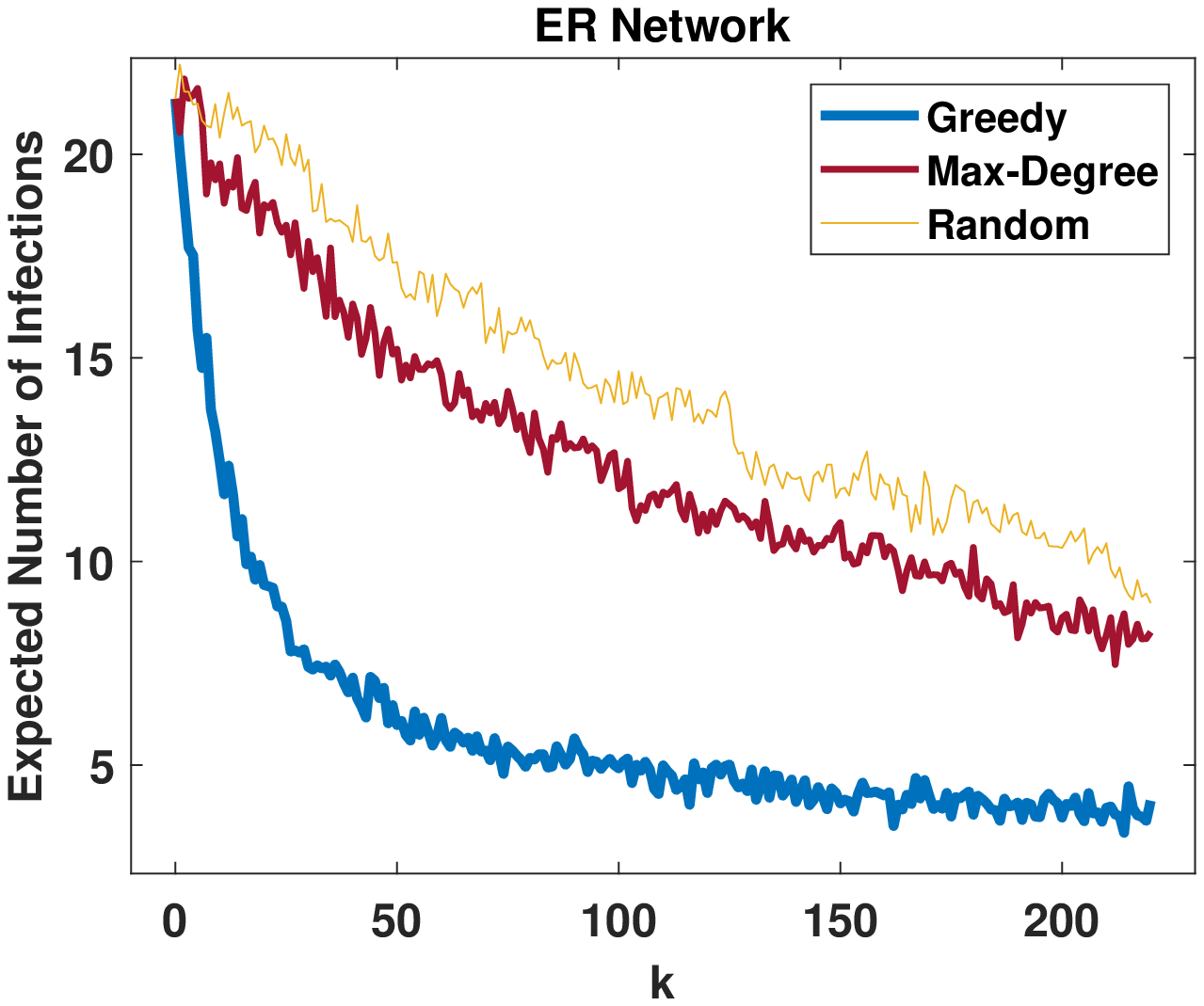}\quad
\includegraphics[width=0.4\textwidth]{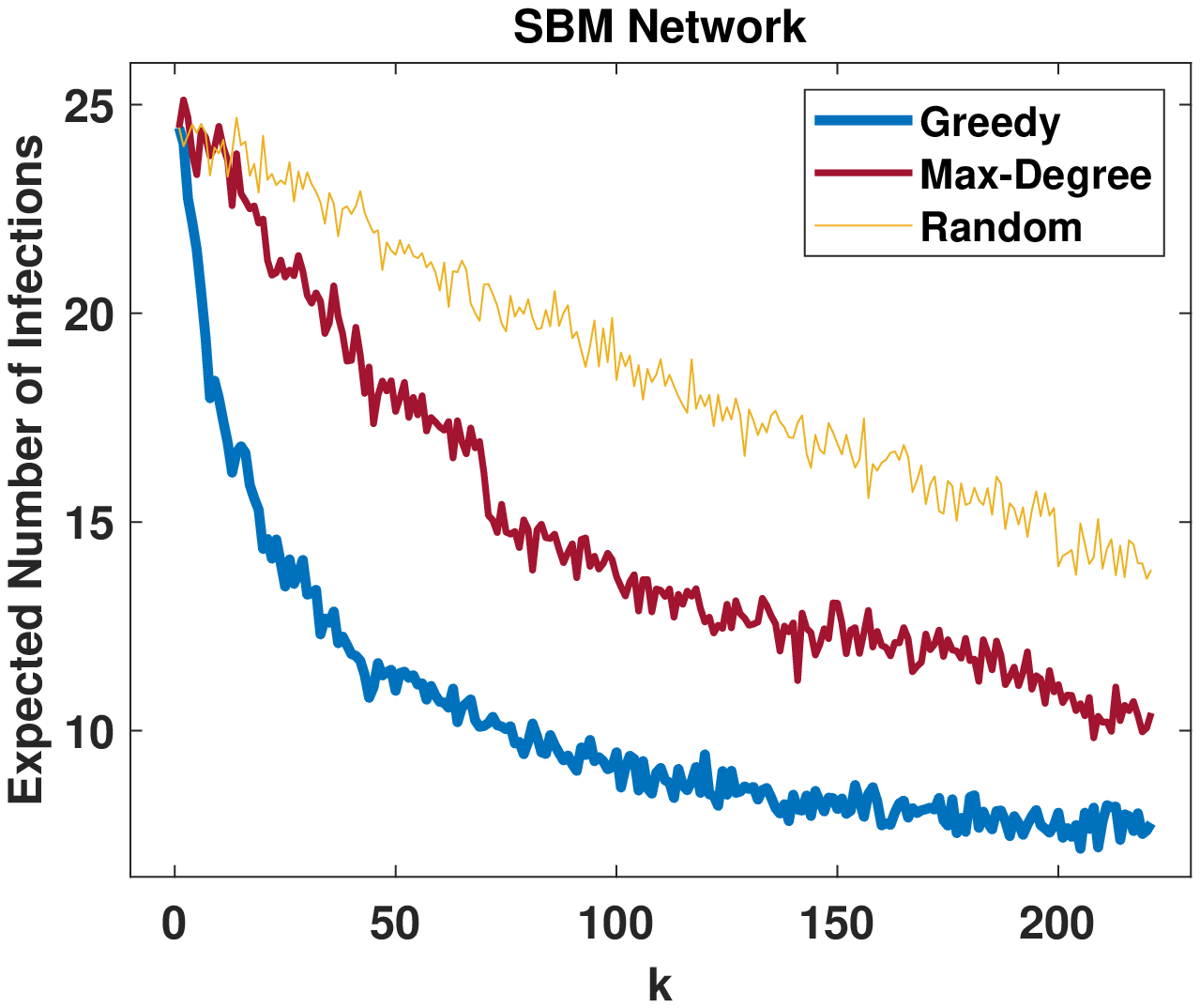}
\includegraphics[width=0.4\textwidth]{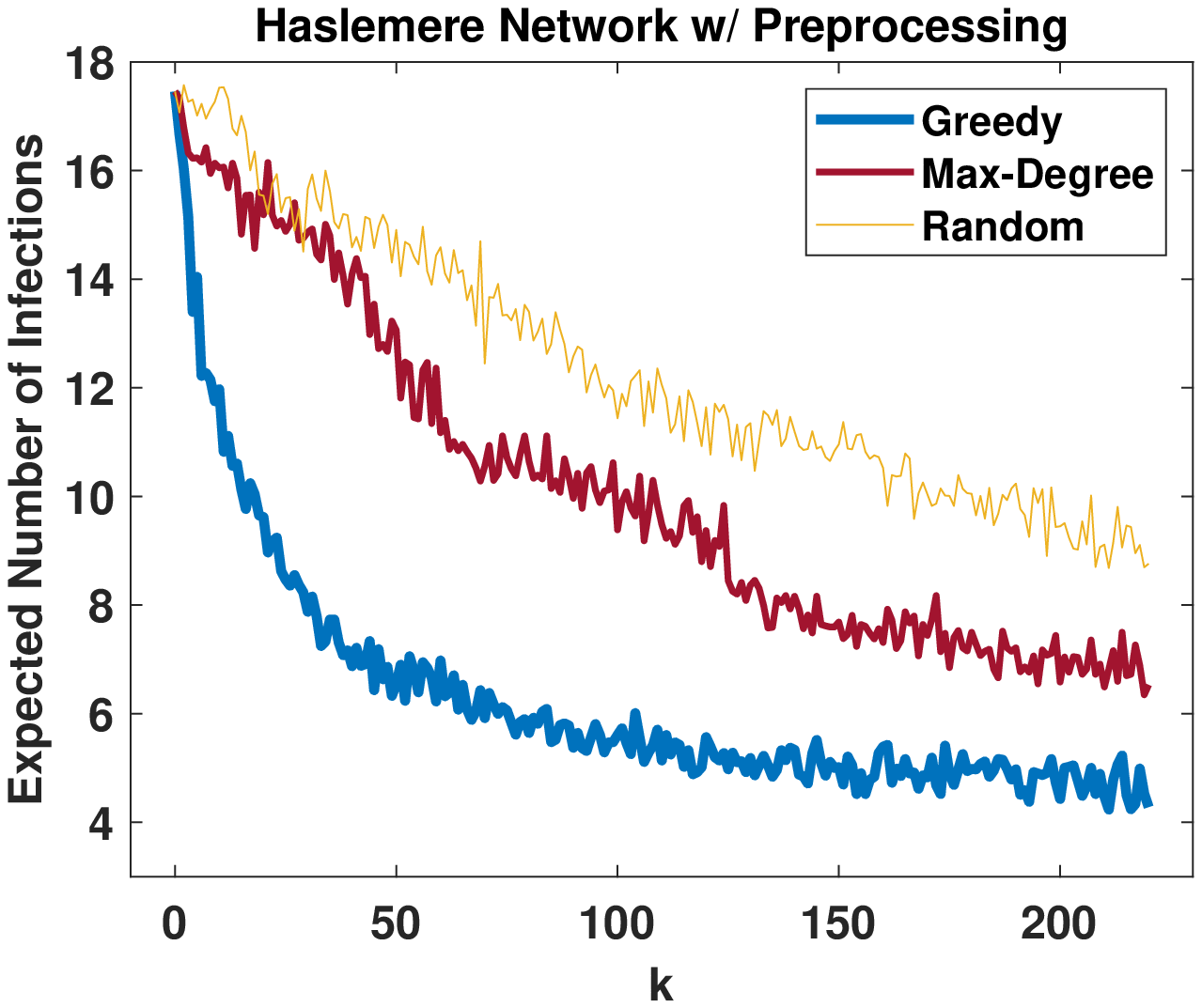}\quad
\includegraphics[width=0.4\textwidth]{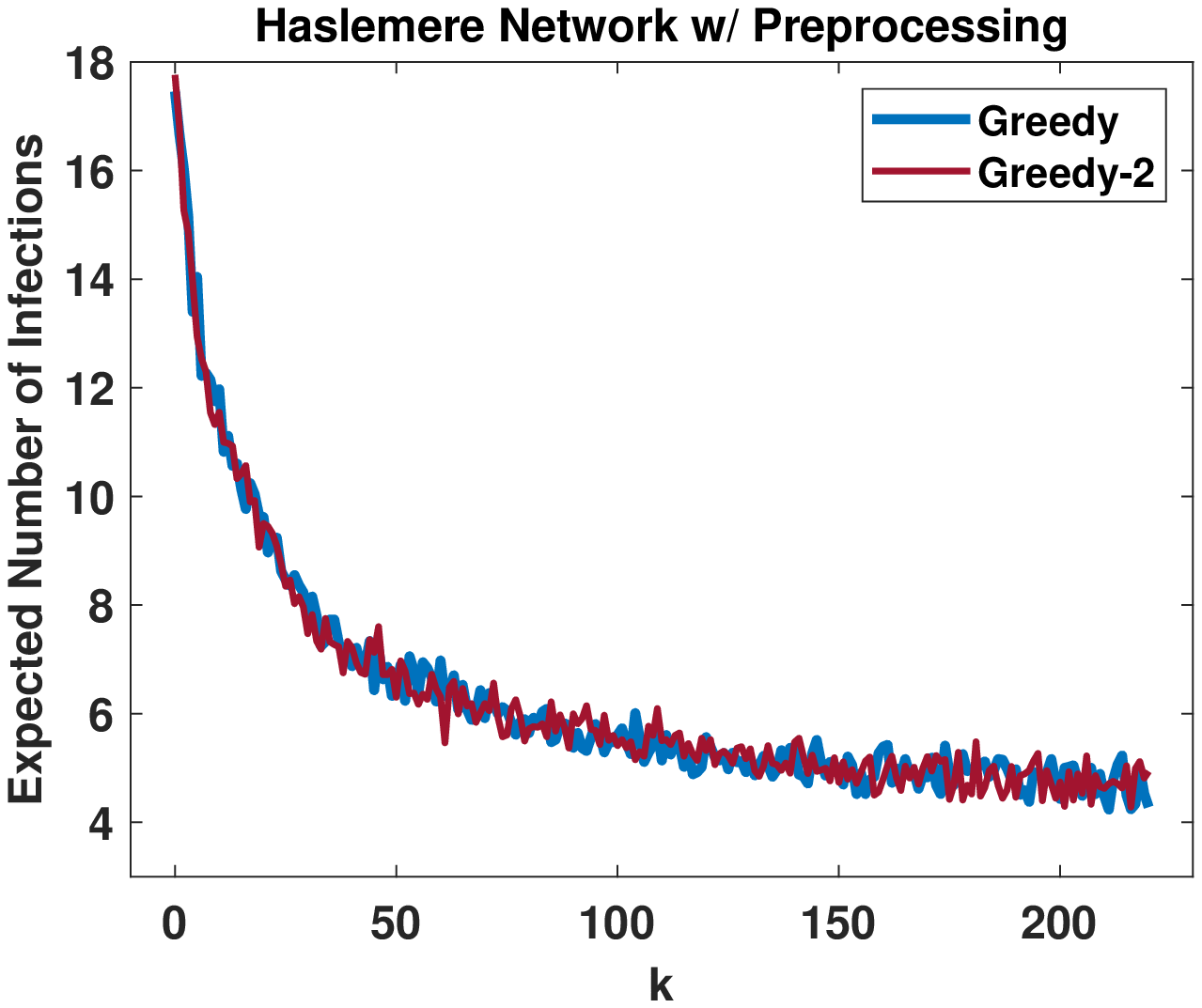}
\caption{In the IC-SIR model, the expected number of infections $\expec{}{\tilde{\sigma}}$ for the edge deletion sets given by Greedy, Max-Degree, and Random algorithms on the ER network, the contact network generated from the SBM model, the Haslemere contact network with preprocessing.
The bottom-right figure compares the results of Algorithm~\ref{alg:greedy} with the input $\expec{}{\tilde{\sigma}'}$ (Greedy) and $\expec{}{\tilde{\sigma}}$ (Greedy-2). 
The title of the horizontal axis $k$ is the number of removed edges.}
\label{fig:IC_SIR_RHO}
\end{figure}

We first run experiments on an ER network $G(V,E)$ with $1251$ edges sampled from $\calG(n,p_1)$ where $n=500$ and $p_1=0.01$. The activation probability $p$ for each edge in $E$ is $0.16$. The candidate set with $625$ edges is chosen from the contact network uniformly at random. We 
randomly and uniformly 
choose $5$ nodes in $V$ as seeds. 

Second, we consider a SBM network sampled from $\SBM(n,\kappa,\QQ)$, where $n=100$, $\kappa=5$, the entries $Q_{ij}$ for $i,j\in[\kappa]$ are set as $Q_{ij}=0.023$ for $i=j$ and $Q_{ij}\in [0.0036, 0.0046]$, chosen uniformly at random. The generated contact network has $995$ edges. The candidate set with $497$ edges is chosen from the contact network uniformly at random. The activation probability $p$ is set as $0.21$. The set of seeds with $5$ nodes is chosen from the network uniformly at random.

Third, we run experiments on the Haslemere network with preprocessing, introduced in Section \ref{subsec:dsir_sim}. For this network, we let the connection probability be $0.179$ and uniformly sample half of the edges in the contact networks as the candidate sets. 

The results show that our algorithm outperforms the two considered heuristics in minimizing the expected number of infections in all 
three networks. Since we have proved that the expected number of infections $\expec{}{\tilde{\sigma}}$ is a $(1\pm o(1))$-approximation of  $\expec{}{\tilde{\sigma}'}$, we also apply Algorithm~\ref{alg:greedy} with the input $\expec{}{\tilde{\sigma}}$ to the preprocessed Haslemere network. In Figure~\ref{fig:IC_SIR_RHO}, we compare the results returned by  Algorithm~\ref{alg:greedy} with the input $\expec{}{\tilde{\sigma}'}$ (Greedy) and $\expec{}{\tilde{\sigma}}$ (Greedy-2), respectively. The results show that the performance of Greedy and Greedy-2 are almost identical.

Then we show the performance of Greedy-2 on the Haslemere network with and without preprocessing in Figure~\ref{fig:IC_SIR}. In the Haslemere network without preprocessing, the Greedy-2 algorithm is marginally better than the two heuristics. 
Similar degeneration of the
effectiveness of the greedy algorithm is 
observed in networks with power-law degree distributions.
These examples highlight that our algorithm does not perform well on networks with relatively high max degree, as expected from the discussion in Section~\ref{dis:robustness}.

\begin{figure}
\centering
\includegraphics[width=0.4\textwidth]{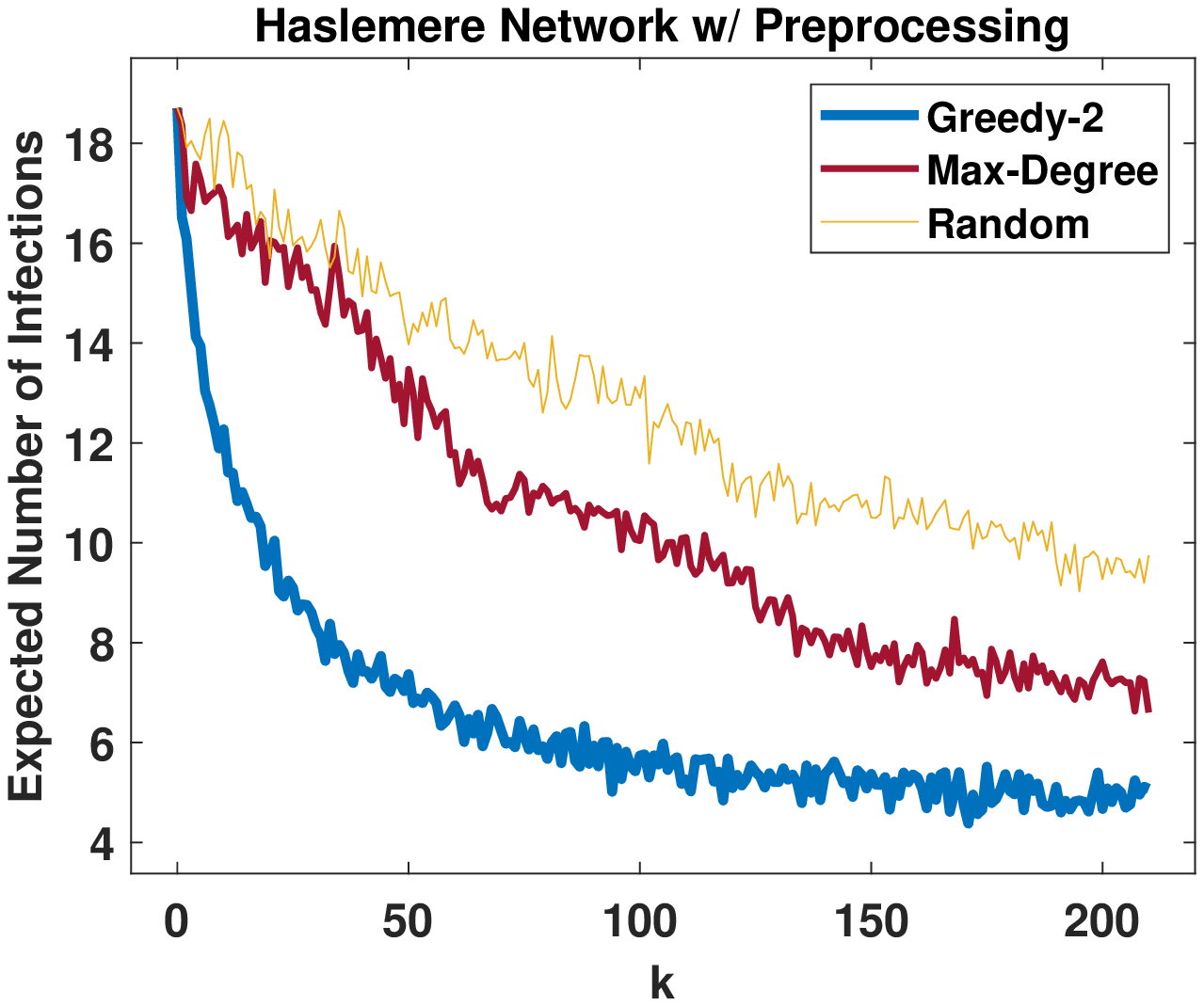}\quad
\includegraphics[width=0.4\textwidth]{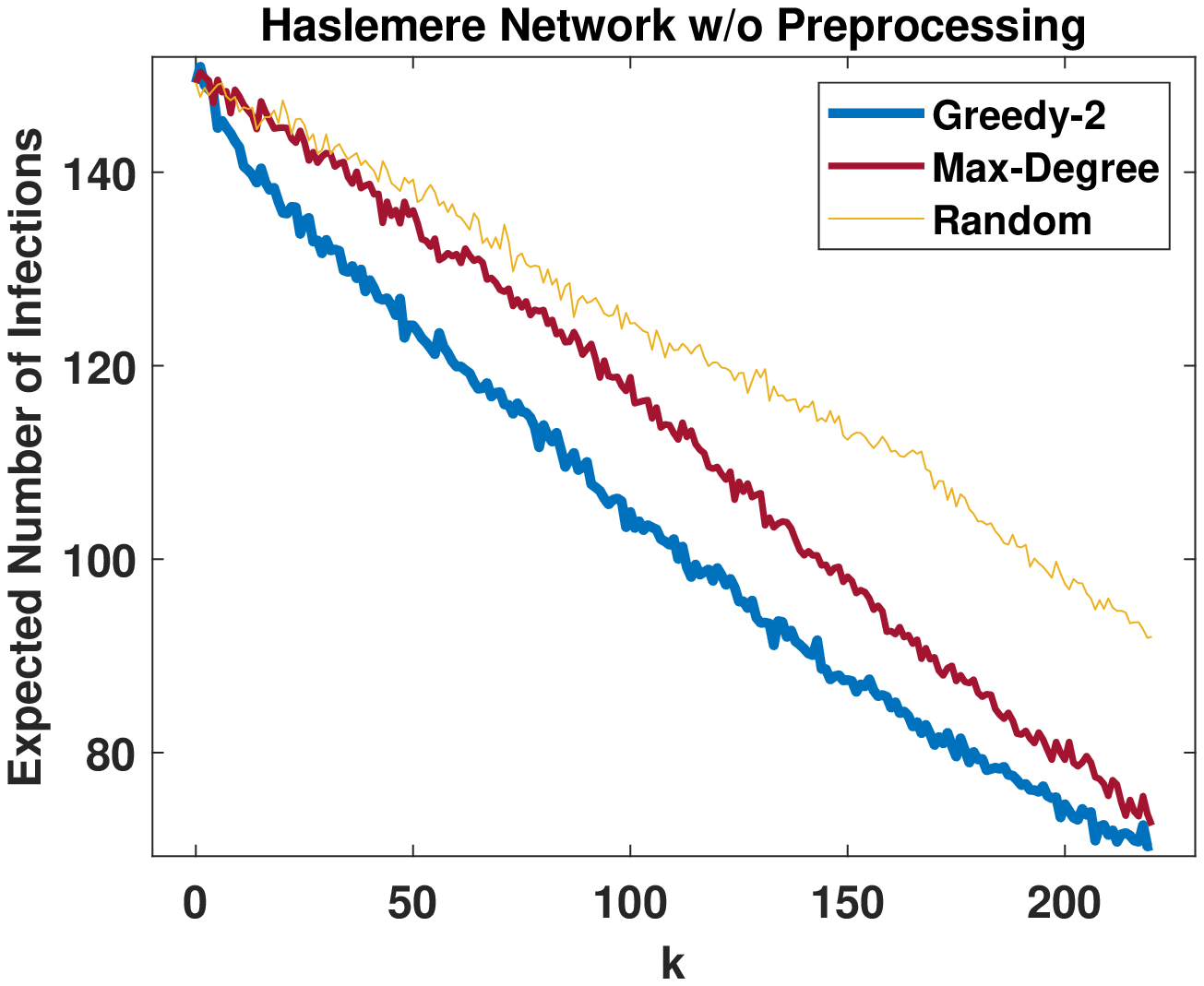}
\caption{In the IC-SIR model, the expected number of infections $\expec{}{\tilde{\sigma}}$ for the edge deletion sets given by Greedy-2, Max-Degree, and Random algorithms on the Haslemere contact network with and without preprocessing.
The title of the horizontal axis $k$ is the number of removed edges.}
\label{fig:IC_SIR}
\end{figure}

\subsection{Comparison of Two Proposed Algorithms}

Here we compare the performance of the greedy algorithm in Algorithm \ref{alg:greedy} when using objectives $\hat{\sigma}$ from the D-SIR model versus $\expec{}{\tilde{\sigma}}$ from the IC-SIR model. We implement the results of the algorithms on the G-SIR model to evaluate their effectiveness, while also considering computation costs. 

For both models we run the 
greedy algorithm on an ER network with $50$ nodes and connection probability $p=0.08$. The generated network has $98$ edges.  We choose $5$ fixed nodes as the set of seeds. We run the G-SIR simulations $15000$ times. According to the Hoeffding's bound, we have a probability of
at least $0.9$ to obtain an error less than or equal to $0.25$. For the D-SIR model, 
Figure~\ref{fig:Accuracy} 
shows the expected number of infections in the G-SIR model, the upper bound $\hat{\sigma}$, and the number of
infections $\sigma$ in the D-SIR model. In the IC-SIR model, the activation probability of an edge $(j,i)$ is set to be the cumulative infection rate from node $j$ to $i$, which is $\sum_{t=0}^{\infty}((1-D_{i})^t(1-B_{ij})^t B_{ij})=(1-(1-D_{i})(1-B_{ij}))^{-1} B_{ij}$. The figure shows the expected number of infections estimated in the G-SIR model and the corresponding IC-SIR model. We show that the two algorithms perform similarly in minimizing the number of infections. The IC-SIR model gives a better approximation for the G-SIR model in terms of the expected number of infections. 

In our implementations, the running time of the algorithm for the D-SIR model is $O(n^3+k(n^2+qn))$; the running time of the algorithm for the IC-SIR model with the input $\expec{}{\tilde{\sigma}'}$ or $\expec{}{\tilde{\sigma}}$ is 
$O(kqn^2\ln{n}/\eps^2)$.
Since we let $q=O(n)$ and $k=O(q)$ in all the experiments, the algorithm for the D-SIR model runs  much faster than the algorithm for the IC-SIR model. We note that the running time of the latter algorithm can be improved by designing data structures to efficiently maintain the connectivity of the generated contagion networks upon edge deletion. 
Discussion about this topic is beyond the scope of this paper.

\begin{figure}
\centering
\includegraphics[width=0.4\textwidth]{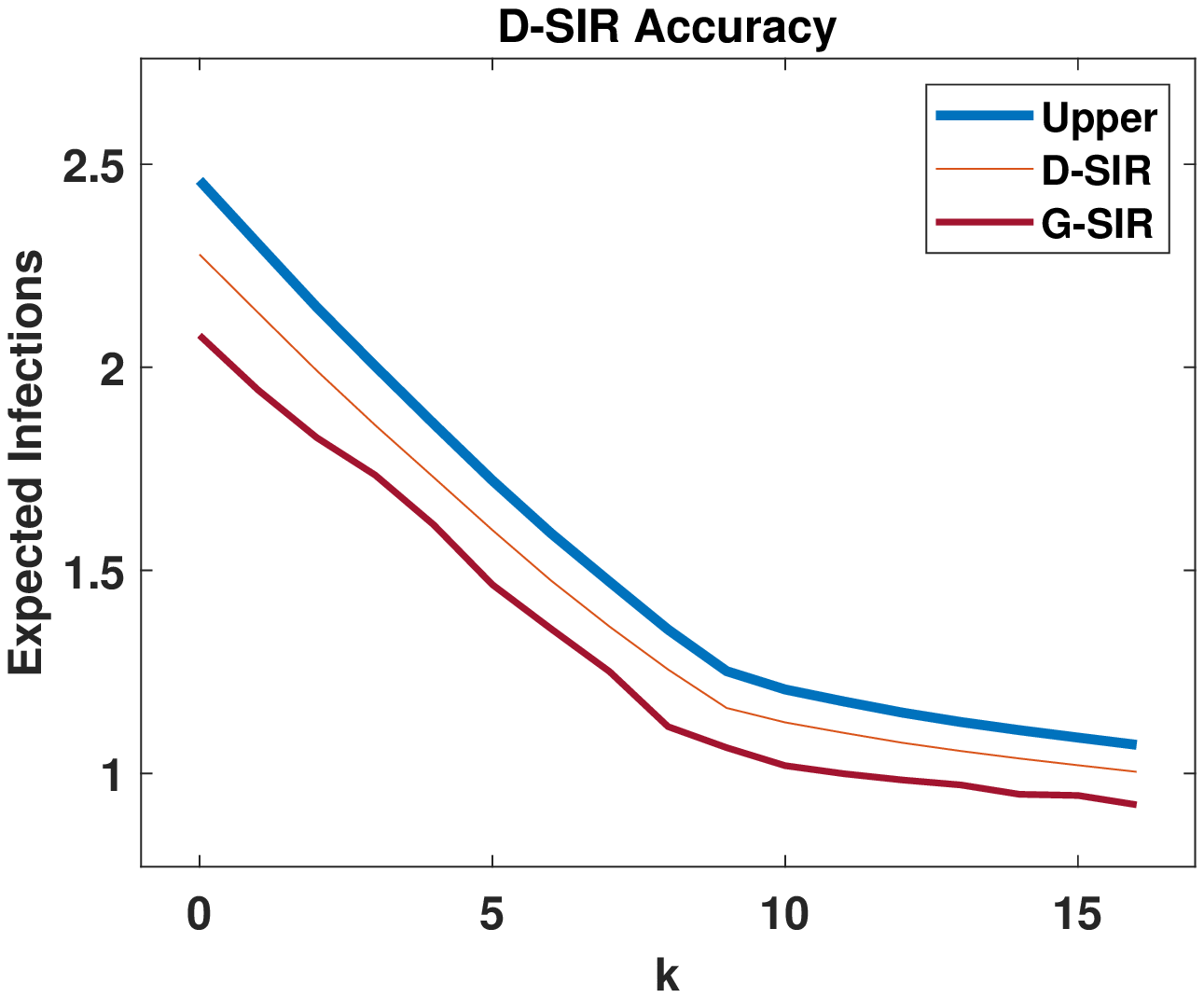}\quad
\includegraphics[width=0.4\textwidth]{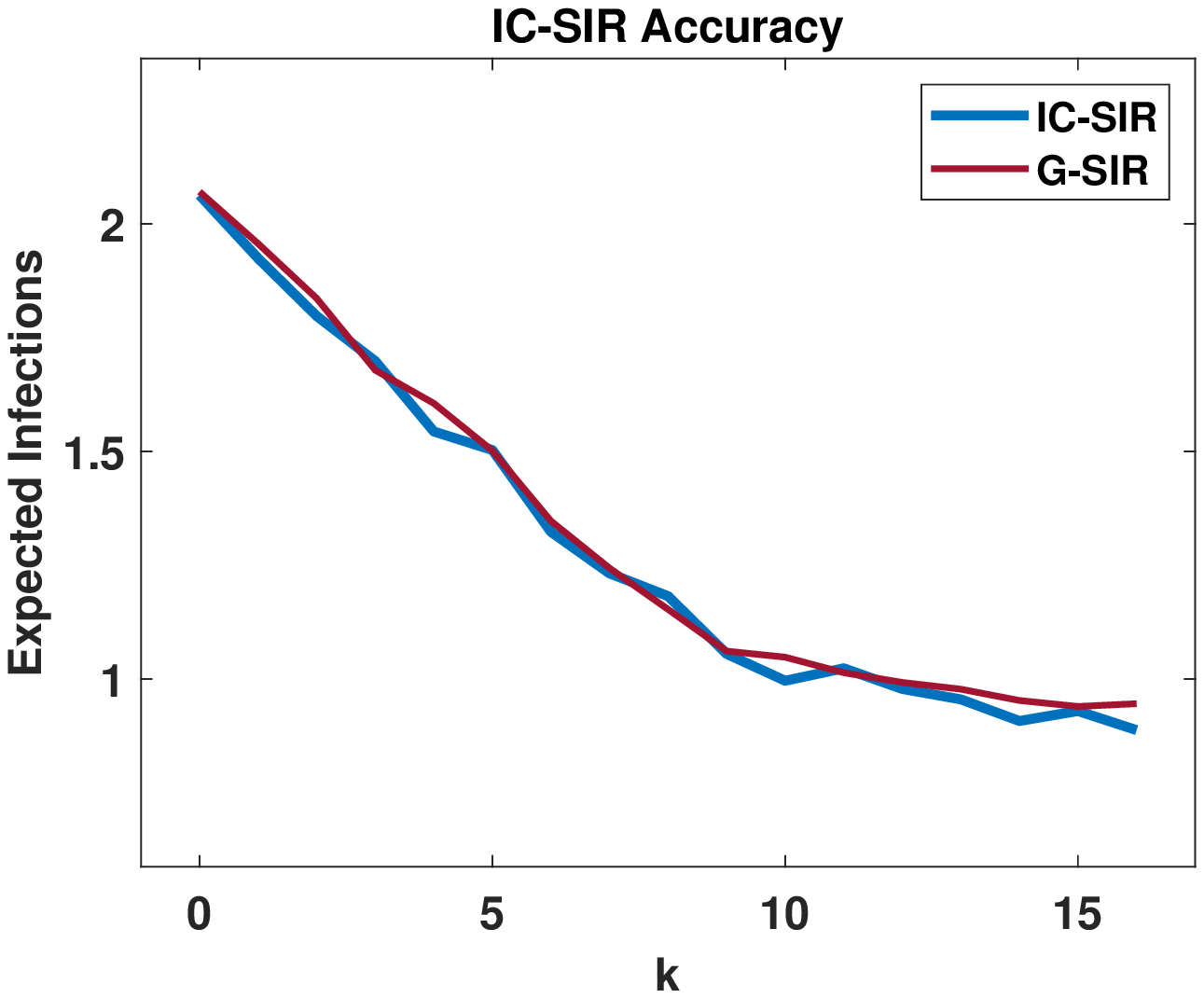}
\caption{The expected number of infections for the edge deletion sets returned by the greedy algorithm in Algorithm \ref{alg:greedy} using objectives $\hat{\sigma}$ in the D-SIR model and $\expec{}{\tilde{\sigma}'}$ in the IC-SIR model on the ER network. We compare the results  
of the algorithm by implementing them on
the G-SIR model with the approximations $\hat{\sigma}$ (Upper), $\sigma$ (D-SIR), and the estimated  $\expec{}{\tilde{\sigma}}$ (IC-SIR).}
\label{fig:Accuracy}
\end{figure}
\section{Conclusion}

In this paper we studied the expected number of infections in a Markov chain SIR model and its two instantiations: the D-SIR model and the IC-SIR model. We focused on the problem of minimizing the number of infections by deleting edges from a given candidate edge deletion set in these models. We proposed a greedy algorithm for each model, and provided guarantees for the performance of these algorithms under certain conditions. 
For the D-SIR model, we proposed a monotone supermodular upper bound for the number of infections when the system is exponentially stable.  For the IC-SIR model we proved that the expected number of infections is a close approximation to a monotone supermodular function with high probability when 1) the contact network is sampled from an ER graph or a SBM with a small number of blocks; 2) the number of initial infections are relatively small; 3) the average degree in the contagion network is less than $1$.
We validated the accuracy of the two models and the effectiveness of the proposed algorithms by running experiments on several synthesized networks and a real contact network. The results showed that the two models 
approximate the Markov chain SIR model well. The
simulations show that the
proposed algorithms can be used to minimize the number of infections effectively when the network does not contain highly connected nodes. 
Therefore, from this we learn that
in order to enable effective targeted mitigation strategies based on contact tracing, the maximum degree in the network needs to be contained by implementing  general measures.

Future work can consider the edge removing problem in a time-varying contact network. The problem can be formulated as an offline or an online problem. In addition, the contact network is assumed to be known without uncertainty in this paper. Designing adaptive algorithms to update the edge deletion set using new observations of the network or the process is another interesting future direction.

\appendix

\section{
Proof from Section~\ref{sec:dsir}}\label{apx:dsir}

\begin{proof}[Proof of Lemma~\ref{lem:dsir_supermodular}]
Since $\norm{\MM_{-P}}<1$ for any set $P \subseteq Q$, the inverse of matrix $(\II-\MM_{-P})$ always exists.
We note that $(\II-\MM_{-P})$ is an M-matrix for any edge deletion set $P$, which implies that its inverse $(\II-\MM_{-P})^{-1}$ is entry-wise non-negative by Lemma~\ref{lem:m-matrix}. 

Consider adding an edge $(j,i)$ to the deletion
set $P$. 
By the Sherman-Morrison formula~\cite{Bar51}, we have
\begin{align*}
    &(\II-\MM_{-P} + (1-x_i(0)-r_i(0))B_{ij} \ee_i \ee_{j}^{\top} )^{-1} \\
    =& (\II-\MM_{-P})^{-1} - c\cdot
    \frac{(\II-\MM_{-P})^{-1}\ee_i \ee_j^{\top}(\II-\MM_{-P})^{-1}}
    {1 + c \ee_j^\top (\II-\MM_{-P})^{-1} \ee_i}\,,
\end{align*}
in which $c = (1-x_i(0)-r_i(0))B_{ij}$ is a non-negative number.
Let $H(P)\defeq (\II-\MM_{-P})^{-1}$.
Thus, we have 
that
$H(P)$ is monotonically
nonincreasing as a function of  adding edges to the deletion set.

For an edge  deletion
set $P$ and any edge $e=(j,i) \in Q \setminus P$, we derive
\begin{align*}
    H(P) - H(P \cup (j,i)) = \int_{0}^{1} c \cdot (\II-\MM_{-P})^{-1} \ee_i \ee_j^{\top}(\II-\MM_{-P})^{-1} t \, \mathrm{d}t\,.
\end{align*}

Considering any set $P_2$ satisfying $P_1\subseteq P_2$ and $e\notin P_2$, we have the same result except that $\MM_{-P}$ is replaced by $\MM_{-P_2}$. 
Since we
proved that the function $H(S)$ is
monotonically
nonincreasing, 
we have that
\begin{align*}
    H(P_1) - H(P_1 \cup (j,i)) \geq H(P_2) - H(P_2 \cup (j,i))\,.
\end{align*}
Therefore, we proved that function $H(P)$ is entry-wise non-negative, and 
an entry-wise monotone supermodular 
function
with respect to the set of edges removed. 

In addition, we observe that $(\MM_{-P}+\DDelta-\II)$ is an entry-wise nonnegative nonincreasing modular function of $P$. Therefore, the product of $(\MM_{-P}+\DDelta-\II)$ and $H(P)$ is entry-wise monotone supermodular. Since the entries of $\1$ and $\xx(0)$ are all non-negative, the function $\hat{\sigma}(P)$ is monotone supermodular.
\end{proof}


\section{
Proofs from  Section~\ref{sec:IC_R}}\label{apx:icsir_random}

\begin{proof}[Proof of Lemma~\ref{lem:cc_size}]
We couple the random graph model with the branching process~\cite[Section 11.3]{AS15}, which starts from a root and samples $n$ $i.i.d.$ Bernoulli random variables with probability $p$ for an active leaf to generate its children at each time step. Let $Y_t$ be the number of active leaves at time step $t$. If $Y_{t-1} > 0$, then let $Z_t$ be the number of new leaves generated at time $t$. Thus, each random variable $Z_t$ is
the sum of $n$ $i.i.d.$ Bernoulli random variables with probability $p$.  Then, we consider the recursion $Y_0 = 1$ and $Y_t = Y_{t-1} + Z_t -1$, for all $t\geq 1$. The number of vertices generated by this process is an upper bound of the size of the corresponding connected component in the random graph model. 

Let $T$ be the stopping time of this branching process, which is also the number of vertices generated in this process. Then, we have that
the probability of generating more than $t$ vertices is
\begin{align*}
\prob{}{T > t} \leq \prob{}{Y_t > 0} = \prob{}{\sum_{i=1}^t Z_t > t}.
\end{align*}
Since $\{Z_i\}_{i=1}^t$ are independent and each $Z_i$ are the sum of $n$ $i.i.d.$ Bernoulli random variables, by the Chernoff bound, we have
\begin{align*}
\prob{}{\sum_{i=1}^t Z_t > t} \leq \exp\left(-\frac{(1-d)^2 t}{3}\right).
\end{align*}
By setting $t = L$, we get $\prob{}{T > L}  \leq n^{-3}$. Using the union bound, we get the probability of any node in a connected component greater than $L$ is at most $n^{-2}$. 
\end{proof}


\begin{proof}[Proof of Lemma~\ref{lem:seed_collision}]
Given that the event $\bar{\tau}_1$ happens, we can put all connected components in bins with the same capacity $L$. Let $m$ be the number of bins we used. All connected components can be placed into less than $\lceil\frac{2n}{L}\rceil$ bins because we can place the connected components in a way that each bin is at least half full, with possibly one extra bin.
 
For any fixed bin, the probability that a seed is sampled from any components from this bin is at most $L/n$.
By using the union bound, the probability of having at least one bin with more than one seed is 
\begin{align*}
    \prob{}{\tau_2 \mid \bar{\tau}_1}  \leq \binom{s}{2} m
    \left(\frac{L}{n}\right)^2
    \leq \frac{2s^2L}{n}. 
\end{align*}
\end{proof}


\begin{proof}[Proof of Lemma~\ref{lem:cycle}]
Since deleting edges might remove some cycles in the network, we can upper bound $y(P)$ by $y(\emptyset)$. We first estimate the number of cycles $x$ in the random graph $\calG(n,p)$. According to the analysis in~\cite[Theorem 8.9]{BHK20}, the number of possible length-$z$ cycles is $\binom{n}{z} \frac{(z-1)!}{2}$. The probability that such a length-$z$ cycle appears in the random graph is $p^z$. Therefore, the expected number of cycles in the random graph is at most
\begin{align}
\label{cycle:eqn}
    \expec{\widetilde{G}}{x} \leq \sum_{z=3}^{n} \binom{n}{z} \frac{(z-1)!}{2} p^z \leq \frac{1}{2}\sum_{z=3}^{n} d^z \leq \frac{d^3}{2(1-d)} \,,
\end{align}
where the second inequality follows from the fact that $p=d/n$.

Suppose the event $\bar{\tau}_1$ happens, then all connected components in the contagion network have size at most $L$.  Since all seeds are chosen uniformly at random, the probability that each seed is sampled from a connected component with cycles is at most $xL/n$. Thus, for any fixed random graph, the expected number of connected components with at least one seed and one cycle is at most $sxL/n$, where $s$ is the number of seeds. 
Since the size of each component is at most $L$, the expectation of $y(P)$ over the set of seeds $\mathcal{S}$ is
\begin{align*}
    \expec{\mathcal{S}}{y(P) \mid \widetilde{G},\bar{\tau}_1} \leq L \cdot \frac{sxL}{n}.
\end{align*}
Therefore, we have
\begin{align*}
    \expec{}{y(P) \mid \bar{\tau}_1,\bar{\tau}_2}
    \prob{}{\bar{\tau}_1,\bar{\tau}_2}
    \leq& \expec{}{y(P) \mid \bar{\tau}_1}
    \prob{}{\bar{\tau}_1} \leq \expec{\widetilde{G}}{L\cdot \frac{sxL}{n} \mid \bar{\tau}_1}\prob{}{\bar{\tau}_1}\\
    =& \frac{sL^2}{n}\expec{\widetilde{G}}{x \mid \bar{\tau}_1}\prob{}{\bar{\tau}_1}
    \leq \frac{sL^2}{n}\frac{d^3}{2(1-d)},
\end{align*}
where the second inequality follows from
taking the expectation over seeds and the last inequality holds because $\expec{\widetilde{G}}{x \mid \bar{\tau}_1}\prob{}{\bar{\tau}_1} \leq \expec{\widetilde{G}}{x} \leq d^3/(2-2d)$.
\end{proof}

\section{
Proof from Section~\ref{sec:IC_SBM}}\label{apx:IC_SBM}

\begin{proof}[Proof of Lemma~\ref{lem:sbm_cycles}]
An instance of the stochastic block model $\SBM(n,\kappa,\QQ)$ can be viewed as a composition of $\kappa$ random graphs and $\kappa(\kappa-1)$ random bipartite graphs.
First, we generate all the intra-block edges using probabilities given by $\diag{\QQ}$. By Lemma~\ref{lem:cc_size} we obtain that each block contains a connected component of size greater than $L_{\rm{init}}=9(1-d_{\rm{init}})^{-2}\ln{n}$ with probability at most $n^{-2}$. By the union bound, the probability that there exists a connected component greater than $L_{\rm{init}}=9(1-d_{\rm{init}})^{-2}\ln{n}$ in the whole network is less or equal to $\kappa n^{-2}$. 

Similar to the analysis of the 
ER random graphs, we place the connected components into bins with the same capacity $L_{\rm{init}}$, and we refer to these bins as small bins hereafter. 
In addition, according to (\ref{cycle:eqn}), the expected number of cycles given that $\bar{\tau}_1$ happens satisfies
\begin{align*}
\expec{}{x \mid \bar{\tau}_1}\leq \kappa \cdot \frac{(d_{\rm{init}})^3}{2(1-d_{\rm{init}})}\cdot\frac{1}{\prob{}{\bar{\tau}_1}}\,.
\end{align*}

We denote the current network by $\widetilde{G}^{(\kappa)}$, indicating that $\kappa$ random graphs have been generated. Then we add the random bipartite graphs to the graph until we obtain $\widetilde{G}=\widetilde{G}^{(\kappa^2)}$. 

Suppose at some point we have constructed the graph $\widetilde{G}^{(\kappa+\alpha-1)}$, with $\kappa$ random graphs and $\alpha-1$ random bipartite graphs generated. 
We consider the expected incremental number of cycles added to the graph when we add the $\alpha$-th random bipartite graph $\widetilde{G}_{\rm{bip}}^{(\alpha)}$ with connecting probability $\QQ_{ij}$. The set of new cycles includes cycles that only use edges in $\widetilde{G}_{\rm{bip}}^{(\alpha)}$ and cycles formed by connecting existing connected components in $\widetilde{G}^{(\kappa+\alpha-1)}$.

The number of cycles using only edges in $\widetilde{G}_{\rm{bip}}^{(\alpha)}$ is given by
\begin{align*}
    \expec{}{x^{\rm{bip}}_{(\alpha+1)} \mid \bar{\tau}_1}\prob{}{\bar{\tau}_1} &\leq \sum_{z=2}^{\lceil L^*/2\rceil} \binom{n}{z}^2
    \frac{(z!)^2}{2}
    \left(\QQ_{ij}\right)^z \leq \frac{1}{2}\sum_{z=2}^{\lceil L^*/2\rceil} \left(n \QQ_{ij}\right)^z \leq \frac{(d_{ij})^2}{2(1-d_{ij})} \,.
\end{align*}
Then we consider the expected number of cycles formed by connecting exactly $z$ distinct existing connected components. For any $z$ connected components $\{C_i\}_{i=1}^z$ with sizes $\{\ell_i\}_{i=1}^z$, respectively, let $\calC(C_1,C_2,\cdots,C_z)$ be the event that they form a cycle. Then, we have
\begin{align*}
    &\prob{}{\calC(C_1,C_2,\cdots,C_z)}  \leq (z-1)!\cdot\frac{\ell_z\ell_1}{n}\prod_{i=1}^{z-1} \frac{\ell_i\ell_{i+1}}{n} \\
    &\leq \frac{(z-1)!}{n^z}\prod_{i=1}^z \ell_i^2 \leq \frac{(z-1)!}{n^z} \left(\frac{\sum_{i=1}^z \ell_i}{z}\right)^{2z} \leq \frac{(z-1)!}{n^z} \left(\frac{L^*}{z}\right)^{2z},
\end{align*}
where the last inequality is due to the generalized mean inequality. Note that this upper bound does not depend on the sizes of components, which can be denoted by $\prob{}{\calC_z}$.

For any fixed $z$, we use $N_z$ to denote the number of choices for these $z$ components. Let $m_{\rm{bin}}$ be the number of small bins needed to cover a large bin, which is at most
\begin{align*}
m_{\rm{bin}} \leq 
\left\lceil\frac{L^*}{L_{\rm{init}}/2}\right\rceil
\leq 2\left\lceil\left(\frac{1-d_{\rm{init}}}{1-d_{\rm{end}}}\right)^2\frac{\ln(\kappa n)}{\ln n}\right\rceil \leq 4\left\lceil\left(\frac{1-d_{\rm{init}}}{1-d_{\rm{end}}}\right)^2\right\rceil.
\end{align*}
Then, we consider two cases regrading to the number of components $z$.

If the number of components $1 \leq z \leq m_{\rm{bin}}$, the number of choices $N_z$ satisfies
\begin{align*}
N_z \leq \binom{n\kappa}{z} \binom{L^*}{z},
\end{align*}
where the first factor upper bounds the number of choices of $z$ small bins in the graph, and the second factor upper bounds the number of choices of $z$ connected components from the $z$ bins, which are all in a new connected component whose size is at most $L^*$.
Then the expected number of cycles for this case is 
\begin{align*}
    &\expec{}{x'_{(\alpha)} \mid \bar{\tau}_1} \prob{}{\bar{\tau}_1} \leq \sum_{z=1}^{m_{\rm{bin}}} N_z \cdot \prob{}{\calC_z} \leq \sum_{z=1}^{m_{\rm{bin}}} \binom{n\kappa}{z} \binom{L^*}{z} \cdot \prob{}{\calC_z} \\
    \leq& \sum_{z=1}^{m_{\rm{bin}}} \frac{(n\kappa)^z}{z!} \cdot \frac{(L^*)^z}{z!} \cdot \frac{(z-1)!}{n^z}\left(\frac{L^*}{z}\right)^{2z} =  \sum_{z=1}^ {m_{\rm{bin}}} \frac{ (L^*)^{3z}\cdot \kappa^z}{z^{2z+1}\cdot z!} \leq \sum_{z=1}^ {m_{\rm{bin}}} (L^*)^{3m_{\rm{bin}}}\cdot \kappa^{m_{\rm{bin}}}.
\end{align*}

If the number of components $z> m_{\rm{bin}}$, then $N_z$ satisfies
\begin{align*}
N_z \leq \binom{n\kappa}{m_{\mathrm{bin}}} \binom{L^*}{z}.
\end{align*} 
The expected number of cycles in this case is
\begin{align*}
    &\expec{}{x''_{(\alpha)} \mid \bar{\tau}_1} \prob{}{\bar{\tau}_1} \leq \sum_{z=m_{\rm{bin}}+1}^{L^*} N_z \cdot \prob{}{\calC_z}
    \\
    \leq & \sum_{z=m_{\rm{bin}}+1}^{L^*} \frac{(n\kappa)^{m_{\rm{bin}}}}{m_{\rm{bin}}!}\cdot \frac{(L^*)^z}{z!} \cdot \frac{(z-1)!}{n^z} \left(\frac{L^*}{z}\right)^{2z}\\
    = & \sum_{z=m_{\rm{bin}}+1}^{L^*} \frac{(L^*)^{3z}\cdot \kappa^{m_{\rm{bin}}}}{n^{z-m_{\rm{bin}}}\cdot z^{2z+1}\cdot m_{\rm{bin}}!} \leq \sum_{z=m_{\rm{bin}}+1}^{L^*} (L^*)^{3m_{\rm{bin}}}\cdot \kappa^{m_{\rm{bin}}}.
\end{align*}

Combining these two cases, we have the expected number of new cycles formed by connecting existing connected components is 
\begin{align*}
\expec{}{x^{\rm{con}}_{(\alpha)} \mid \bar{\tau}_1} \prob{}{\bar{\tau}_1} \leq \expec{}{x'_{(\alpha)} + x''_{(\alpha)} \mid \bar{\tau}_1} \prob{}{\bar{\tau}_1} \leq (L^*)^{3m_{\rm{bin}}+1} \cdot \kappa^{m_{\rm{bin}}}\,,
\end{align*}
and the total expected number of cycles that are created upon adding the random bipartite graph $\widetilde{G}_{\rm{bip}}^{(\alpha)}$ is
\begin{align*}
\expec{}{x_{(\alpha)} \mid \bar{\tau}_1} \prob{}{\bar{\tau}_1} = \expec{}{x^{\rm{bip}}_{(\alpha)} + x^{\rm{con}}_{(\alpha)}\mid \bar{\tau}_1} \prob{}{\bar{\tau}_1}
= \frac{(d_{ij})^2}{2(1-d_{ij})}+ (L^*)^{3m_{\rm{bin}}+1} \cdot \kappa^{m_{\rm{bin}}}. 
\end{align*}
By adding all random bipartite graphs, we obtain that the expected number of cycles in $\widetilde{G}$ is
\begin{align*}
\expec{}{x \mid \bar{\tau}_1}\prob{}{\bar{\tau}_1} &= \kappa\cdot \frac{(d_{\rm{init}})^3}{2(1-d_{\rm{init}})} + \kappa(\kappa-1)(L^*)^{3m_{\rm{bin}}+1} \cdot \kappa^{m_{\rm{bin}}} + \sum_{\substack{i,j\in [\kappa]\\i\neq j}}\frac{(d_{ij})^2}{2(1-d_{ij})}\\
&\leq \kappa\cdot \frac{(d_{\rm{init}})^3}{2(1-d_{\rm{init}})} + (L^*)^{3m_{\rm{bin}}+1} \cdot \kappa^{m_{\rm{bin}}+2} + \kappa^2\frac{(d_{\rm{end}})^2}{2(1-d_{\rm{end}})}.
\end{align*}
For any $\kappa=O(\ln{n})$, we obtain
\begin{align*}
\expec{}{y \mid \bar{\tau}_1,\bar{\tau}_2}\prob{}{\bar{\tau}_1,\bar{\tau}_2}\leq& \expec{}{y \mid \bar{\tau}_1}\prob{}{\bar{\tau}_1} \\
\leq& \expec{}{L^* \frac{sxL^*}{\kappa n} \mid \bar{\tau}_1}\prob{}{\bar{\tau}_1} = O\left(\frac{s(\ln n)^{4m_{\rm{bin}}}}{n}\right).\tag*{\qedhere}
\end{align*}
\end{proof}

\newpage
\bibliographystyle{siamplain}
\bibliography{ref}

\end{document}